\newtheorem {Theorem}                    {Theorem}
\newtheorem {Proposition}[Theorem]       {Proposition}
\newtheorem {Lemma}      [Theorem]       {Lemma}
\newtheorem {Corollary}  [Theorem]       {Corollary}
\newtheorem*{Claim}       {Claim}
\newtheorem*{Question}   {Question}
\theoremstyle{definition}
\newtheorem  {Definition} [Theorem]{Definition}
\theoremstyle{remark}
\newtheorem*{Remark}	{Remark}
\newtheorem {Remark2} [Theorem] {Remark}
\numberwithin{Theorem}{section}
\newcommand{\mute}[1] {}
\newcommand{\mc}[1]{\mathcal{#1}}
\def \R{\mathbb{R}}
\def \del{\partial}
\def \ssm{\smallsetminus}
\def\eps{\varepsilon}
\begin{document}
\pagenumbering{arabic}
\author[A. Bar-Natan]{Assaf Bar-Natan \\McGill University, 805 Sherbrooke St. W, Montreal, QC, H3A 0B9}
\title{Arcs on Punctured Disks Intersecting at Most Twice with Endpoints on the Boundary}
\maketitle

\section*{Abstract}
Let $D_n$ be the $n$-punctured disk. We prove that a family of 
essential simple arcs starting and 
ending at the boundary and pairwise intersecting 
at most twice is of size at most $\binom{n+1}{3}$. On the way, 
we also show that any nontrivial square complex 
homeomorphic to a disk whose 
hyperplanes are simple arcs intersecting at most twice must 
have a corner or a spur.

\tableofcontents
\section{Introduction}
It is a classical theorem that maximal cliques in the curve graph 
$\mc{C}^0(S)$ of a hyperbolic closed orientable surface $S$ 
are of size $\frac{3}{2}|\chi|$ (see \cite{Farb}, for example).

We define the augmented curve graph $\mc{C}^k(S)$ as follows:
\begin{itemize}
  \item The vertices of $\mc{C}^k(S)$ are homotopy classes of 
    essential simple closed curves on $S$. 
  \item Two vertices are connected by an edge if they have 
    representatives intersecting at most $k$ times.
\end{itemize}
The sizes of cliques of these graphs were explored 
in \cite{Juvan}, \cite{Malestein}, \cite{Przytycki} and, 
more recently, in \cite{Aougab}. The combination of these papers show 
that $N_1(S)$, the maximal clique size in $\mc{C}^1(S)$ satisfies:
\begin{align*}
O(|\chi|^2)\le N_1(S) \le O\left(\frac{|\chi|^3}{(\log|\chi|)^2}\right)
\end{align*}

In the case of arcs, the augmented arc graph, $\mc{A}^k(S)$ 
is similarly defined for punctured surfaces. 
P.~Przytycki \cite{Przytycki} showed that 
this arc graph has maximal clique size growing asymptotically as 
$\Theta(|\chi|^{k+1})$. In particular, for $k=1$ it has 
maximal clique size $2|\chi|(|\chi|+1)$. When $S$ is a punctured sphere, 
the maximal clique size of $\mc{A}^2(S)$ is known to be 
$|\chi|(|\chi|+1)(|\chi|+2)$, see \cite{Smith}. When the 
endpoints of arcs are fixed, then we get a subgraph
of $\mc{A}^1(S)$, whose maximal clique size 
is calculated in \cite{Przytycki} 
to be $\binom{|\chi|+1}{2}$. The main result of this paper will 
be to compute the maximal clique size of the corresponding 
subgraph of $\mc{A}^2(S)$. 
\begin{Definition}
  Let $D_n$ be the $n$-punctured disk. 
  An \emph{arc} in $D_n$ is an embedding $\alpha:(0,1)\to D_n$ 
  converging to punctures or the boundary $\del D_n = S^1$ at 
  $0,1$. We often identify arcs with their images in $D_n$. 
  We write $\alpha\sim \beta$ if $\alpha$ is homotopic 
  to $\beta$ relative to $\del D_n$. In this article, 
  we assume that all arcs are \emph{essential}, meaning that 
  they are not homotopic to a constant arc. 
\end{Definition}
\begin{Definition} Two arcs $\alpha,\beta$ are said to be 
  in \emph{minimal position} if they minimize $|\alpha\cap\beta|$ 
  in their respective homotopy classes relative to $\del D_n$.
\end{Definition}
\begin{Definition}
  A family of simple arcs $\mc{A}$ on $D_n$ is called \emph{good} if:
  \begin{enumerate}
    \item For any arc $\alpha\in\mc{A}$, the endpoints of $\alpha$ 
      lie in $\del D_n$.
    \item For any arcs $\alpha,\beta\in\mc{A}$, $\alpha\not\sim \beta$.
    \item For any $\alpha,\beta\in\mc{A}$, $|\alpha\cap\beta|\le 2$.
    \item The arcs in $\mc{A}$ are in minimal position.
  \end{enumerate}
\end{Definition}
We are now ready to state the main result of this paper.
\begin{Theorem} \label{Main_Theorem}
  The maximal cardinality of a good family of arcs on $D_n$ 
  is $\binom{n+1}{3}$.
\end{Theorem}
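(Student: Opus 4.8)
The plan is to prove the two bounds separately: the lower bound by an explicit construction, and the upper bound by an induction on $n$ powered by the corner-or-spur lemma announced in the abstract. For the lower bound, put the punctures $p_1,\dots,p_n$ on a horizontal diameter, splitting $\del D_n$ into a top boundary arc $\tau$ and a bottom one, and label the $n+1$ gaps $g_0,\dots,g_n$ along the diameter ($g_0$ left of $p_1$, $g_i$ between $p_i$ and $p_{i+1}$, and $g_n$ right of $p_n$). For each triple $0\le i<j<k\le n$ let $\alpha_{ijk}$ be the arc that leaves $\tau$, descends through $g_i$, runs below $p_{i+1},\dots,p_j$, rises through $g_j$, runs above $p_{j+1},\dots,p_k$, and returns to $\tau$ through $g_k$. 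Each $\alpha_{ijk}$ is simple, being monotone in the horizontal direction, and essential, since it separates $\{p_{j+1},\dots,p_k\}$ from the remaining punctures. Distinct pairs $(j,k)$ yield distinct separations, hence non-homotopic arcs, and arcs sharing a pair $(j,k)$ are distinguished by $i$, which records how far to the left the ``below'' excursion reaches; so the $\binom{n+1}{3}$ arcs are pairwise non-homotopic. After isotoping the whole collection into simultaneous minimal position, a short case analysis on the interleaving of $(i,j,k)$ and $(i',j',k')$ shows that the below-portion of one arc crosses the other at most once and the above-portion at most once, so any two meet at most twice; this is a good family of cardinality $\binom{n+1}{3}$.

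For the upper bound I would induct on $n$, the cases $n\le 1$ being trivial since $D_0$ and $D_1$ carry no essential boundary arc and $\binom{n+1}{3}=0$. Assume the bound for $D_{n-1}$ and let $\mc{A}$ be a good family on $D_n$. Choose a puncture $p$ — the choice must be made with some care — and fill it in to obtain $D_{n-1}$. Filling a puncture never increases a geometric intersection number, so the induced arcs, re-tightened to simultaneous minimal position in $D_{n-1}$, still pairwise meet at most twice; discarding those that have become inessential and keeping one representative of each homotopy class produces a good family $\mc{A}'$ on $D_{n-1}$, so $|\mc{A}'|\le\binom{n}{3}$ by the inductive hypothesis. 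Since $\binom{n+1}{3}=\binom{n}{3}+\binom{n}{2}$, it remains to bound the ``loss'' $|\mc{A}|-|\mc{A}'|$ — the arcs of $\mc{A}$ killed or identified with another arc when $p$ is filled in — by $\binom{n}{2}$. Every such arc is supported near $p$: a killed arc cuts off exactly $\{p\}$ in $D_n$, and two arcs that get identified differ in $D_n$ only by how they wind past $p$. I would encode these $p$-local arcs as the hyperplanes of a disk diagram — the square complex carried by their union, arranged to be homeomorphic to a disk — and apply the corner-or-spur lemma: repeatedly peeling off a corner or a spur simplifies the configuration, injectively on homotopy classes, into a configuration governed by the $k=1$ arc graph of $D_n$, whose relevant clique size is at most $\binom{|\chi(D_n)|+1}{2}=\binom{n}{2}$ by the methods of Przytycki \cite{Przytycki}. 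This closes the induction: $|\mc{A}|\le\binom{n}{3}+\binom{n}{2}=\binom{n+1}{3}$.

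The main obstacle is the loss estimate. One must select $p$ so that $\mc{A}'$ and the $p$-local arcs are simultaneously controlled; verify that re-tightening in $D_{n-1}$ creates no coincidences among the surviving arcs beyond those already counted; check that the $p$-local arcs genuinely assemble into a disk square complex whose hyperplanes are exactly those arcs; and confirm that each peeling step reduces the configuration while improving the pairwise bound from two intersections to one, so that a $k=1$ count applies. Underlying all of this is the corner-or-spur lemma itself, whose proof — understanding when the hyperplanes of a planar square complex can fail to bound a bigon or a triangle once every two of them cross at most twice — I expect to be the hardest single step.
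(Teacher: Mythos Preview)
Your high-level skeleton matches the paper's: induct on $n$, remove a puncture $p$, bound the loss $|\mc{A}|-|\mc{A}'|$ by $\binom{n}{2}$, and invoke Pascal's identity. Your lower-bound family is morally the same as the paper's, though the paper realizes the arcs as graphs of the cubics $(t-a)(t-b)(t-c)$ for $0\le a<b<c\le n$; then two such arcs differ by a quadratic, so they meet at most twice with no case analysis needed. (Your description of $\alpha_{ijk}$ is slightly garbled---crossing the diameter at $g_i,g_j,g_k$ gives three crossings, not two---but the intended family is fine.)

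The genuine gap is in the loss bound, where you have misplaced the corner-or-spur lemma. In the paper, that lemma is applied to the square complex dual to \emph{all} of $\mc{A}$ (assumed maximal), and its sole purpose is to produce an \emph{isolated arc}: an arc $\alpha_p$ disjoint from every other member of $\mc{A}$ that cuts off a single puncture $p$. This is what ``the choice must be made with some care'' really means, and it is not optional---the subsequent loss estimate depends on $\alpha_p$ being disjoint from everything. The loss is then bounded not by peeling a square complex but by a direct bigon/strip analysis: two arcs of $\mc{A}$ that become homotopic after filling $p$ must cobound a strip containing $p$ alone; each homotopy class in $\mc{A}'$ has at most three preimages in $\mc{A}$, contributing at most two such strips; and to each strip $S$ one associates an arc $\delta_S$ from $p$ to $\del D_n$ (the ``other'' way out of $S$, the first way being a fixed arc $\eps$ inside the region cut off by $\alpha_p$). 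The key lemma is that the $\delta_S$ together with $\eps$ pairwise intersect at most \emph{once}, so Przytycki's $k=1$ bound gives $|\{\delta_S\}|+1\le\binom{n}{2}$ and hence $|\mc{A}|-|\mc{A}'|\le 1+|\{\text{strips}\}|\le\binom{n}{2}$. Your proposal to take the $p$-local arcs themselves as hyperplanes and ``peel corners or spurs to drop from $k=2$ to $k=1$'' does not correspond to any step that I can make precise: removing a corner square does not decrease intersection numbers among the remaining hyperplanes, and there is no evident injection of the $p$-local arcs into a $k=1$ clique of boundary arcs. The reduction to $k=1$ happens on a different surface (arcs from $p$ to $\del D_n$), and getting there requires the isolated-arc input that the square-complex theorem is actually used to supply.
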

In order to prove this theorem, we will first obtain a 
result on square complexes, which is of independent interest:
\begin{Definition}
  Let $X$ be a finite planar square complex homotopy equivalent to a disk. 
  \begin{enumerate}
    \item A \emph{corner} at $v$ of $X$ consists of two consecutive 
      boundary edges intersecting at $v$ which belong to a single square.
    \item A \emph{spur} (at $v$) of $X$ is a $1$-cell 
      with a vertex $v$ which is not 
      contained in any other cell of $X$.
  \end{enumerate}
\end{Definition}
\begin{Theorem} \label{Square_Cplx_Thm}
  Let $X$ be a finite planar square complex homotopy 
  equivalent to a disk. Suppose that 
  the hyperplanes in $X$ are simple arcs, pairwise intersecting 
  at most twice, and that $X$ is not a single $0$-cell. 
  Then $X$ has a corner or a spur.
\end{Theorem}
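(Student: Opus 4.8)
The plan is to peel off degenerate cases and then run an induction on the number of $2$-cells of $X$, using the hyperplane arrangement as the bookkeeping device. First I would dispose of the cases in which $X$ is not ``thick'': if $X$ has no $2$-cell it is a contractible graph, hence a tree, and being nontrivial it has a leaf, which is a spur; if some $1$-cell $e$ lies in no $2$-cell, then either an endpoint of $e$ lies in no other cell (a spur) or $e$ is a separating edge along which $X$ decomposes into strictly smaller subcomplexes still satisfying the hypotheses, and we recurse on a piece that is not a single $0$-cell, noting that a corner or spur of a piece disjoint from $e$ survives in $X$. So we may assume every $1$-cell lies in a $2$-cell; equivalently every hyperplane is a genuine simple arc with both endpoints at midpoints of boundary edges, $\partial X$ is an embedded circle, $X$ has no spur, and it now suffices to find a corner.

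Next I would translate everything into the arc arrangement. The hyperplanes form a finite family $\mathcal H$ of simple arcs in the disk $X$, in general position, with endpoints on $\partial X$ and pairwise at most two intersections, and the complementary regions of $\bigcup\mathcal H$ correspond bijectively to the $0$-cells: the region $R_v$ at $v$ is the union of the quarter-squares at $v$. A vertex $v$ is a corner exactly when $R_v$ meets $\partial X$ and its frontier is one boundary subarc together with exactly two ``bits'' of hyperplanes (arcs cut at crossings and endpoints) meeting at a single crossing point, the centre of the square witnessing the corner. Since every hyperplane is a genuine arc, no region meeting $\partial X$ has fewer than two bits on its frontier, so the whole problem becomes: find a region meeting $\partial X$ whose frontier has exactly two bits. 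Locally: for each boundary edge $e$ let $\beta_e$ be the first bit of the hyperplane dual to $e$ issuing from $\partial X$; then $X$ has a corner iff two consecutive boundary edges $e,e'$ have $\beta_e$ and $\beta_{e'}$ ending at a common crossing point.

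The induction on the number of squares then splits into two cases. If $X$ has one square it is that square and has four corners. If some two hyperplanes $a,b$ cross twice, I would take an innermost bigon $\Delta$ they bound, with sides $\sigma\subset a$, $\sigma'\subset b$; innermost-ness and simplicity of the arcs force every hyperplane meeting $\operatorname{int}\Delta$ to run straight from $\sigma$ to $\sigma'$, so $\Delta$ together with these transverse arcs is the carrier of a strip of squares with $a$ and $b$ on its long sides. Collapsing this strip yields $X'$ with strictly fewer squares and one fewer $a$--$b$ crossing; the inductive hypothesis produces a corner of $X'$, and if it is disjoint from the collapsed strip it is visibly a corner of $X$, while otherwise the square of $X$ lying over it still realises a corner. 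If instead no two hyperplanes cross twice, the arrangement is taut and I would argue directly: among the regions meeting $\partial X$ pick one, $R_v$, with the fewest frontier bits $h_0,\dots,h_k$; if $k=1$ we are done, and if $k\ge 2$, passing to the first square crossed along $h_1$ exhibits a region meeting $\partial X$ with strictly fewer bits, contradicting minimality — tautness being precisely what forbids the only possible obstruction, a hyperplane re-entering the region — so the descent is strict and finiteness of $X$ forces $k=1$.

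The hard part will be the bigon step: after collapsing the innermost-bigon strip one must verify that no hyperplane of $X'$ becomes non-simple and that no pair acquires a third intersection — a hyperplane meeting $a$ or $b$ near $\Delta$, or winding through $\Delta$, is exactly the kind of thing that could misbehave, and it is the innermost choice together with the ``at most twice'' hypothesis that rules this out, somewhat delicately. By contrast the reductions, the translation into arcs, and the taut case (the standard innermost-region descent once the obstruction is excluded) should be routine.
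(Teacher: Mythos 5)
Your high-level split --- remove hyperplane bigons by induction on the number of squares, then settle the bigon-free (``taut'') case by an innermost-region descent --- is a genuinely different organization from the paper's, and the taut half is indeed the easy half. But the bigon step, which you correctly flag as the hard part, has two concrete gaps that your innermost choice does not repair. First, an innermost bigon $\Delta$ between $a$ and $b$ is \emph{not} a product strip: a third hyperplane $c$ may cross $\Delta$ in \emph{two} chords from $\sigma$ to $\sigma'$ (this uses up $|c\cap a|=|c\cap b|=2$ and is permitted by $(\ast)$; the only sub-bigon it creates with $a$ or $b$ lies partly \emph{outside} $\Delta$, so innermostness by inclusion does not exclude it), and two transversal chords may cross each other inside $\Delta$ (they form triangles with $a$ or $b$, not bigons, so again innermostness is silent). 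Consequently ``collapsing the strip'' is not a well-defined operation, and the actual move has to be a fold of the two squares $S_1,S_2$ where $a$ and $b$ cross, with a separate argument that $(\ast)$ survives --- this is exactly the content of Lemma~\ref{Reidemeister} in the paper. Second, and more fatally for a \emph{global} bigon-removal scheme: the squares $S_1,S_2$ may be boundary squares (already in the smallest example, two arcs crossing twice and nothing else, both crossing squares touch $\del X$). The fold is only harmless when it happens in the interior; when the bigon meets $\del X$ the fold changes $\del X$, so a corner of the reduced complex need not come from a corner of $X$, and your clause ``the square of $X$ lying over it still realises a corner'' is precisely the unproved assertion. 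The paper's entire boundary-segment machinery (the maximal $X_{s_{-}}$, the $-,0,\dots,0,+$ labelling, and the three cases) exists to manufacture a situation where the offending bigon is guaranteed to be disjoint from the boundary squares before Lemma~\ref{Reidemeister} is invoked; that work cannot be bypassed by choosing the bigon innermost.

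Two smaller gaps. Your reduction to ``$\del X$ is an embedded circle'' is not a consequence of ``every $1$-cell lies in a $2$-cell'': two squares sharing only a vertex satisfy that hypothesis, and a separating vertex (or separating square) breaks the arc-arrangement picture; the paper disposes of these by a doubling trick whose condition-$(\ast)$ verification is not automatic, since a corner of a piece could sit at the cut vertex and fail to be a corner of $X$. And in the taut case, the descent step ``passing to the first square crossed along $h_1$ exhibits a boundary region with strictly fewer bits'' is asserted rather than proved --- it is not clear the new region meets $\del X$ at all, nor why its bit count drops. I believe the taut case is true and provable (it is essentially a Greendlinger-type statement for reduced square disk diagrams), but as written the descent is not an argument.
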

If hyperplanes in $X$ are simple arcs pairwise intersecting at most twice, 
then we say that they \emph{satisfy condition ($\ast$)}.

\section{Proof Plan}
In this section, we provide an outline for the proof of 
Theorem~\ref{Main_Theorem}. 
\begin{Definition}
  Let $\mc{A}$ be a good family of arcs. An arc 
  $\alpha\in \mc{A}$ is called \emph{isolated} if $\alpha$ 
  is disjoint from any other arc $\beta\in \mc{A}$, and splits 
  $D_n$ into two components, one of which contains a single puncture. 
\end{Definition}
The following two 
propositions will be used to prove Theorem~\ref{Main_Theorem}.
\begin{Proposition} \label{Cloud_Prop}
  In any maximal good family of arcs 
  $\mathcal{A}$ there exists an isolated arc ~$\alpha$.
\end{Proposition}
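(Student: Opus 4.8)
The plan is to feed the family $\mathcal{A}$ into Theorem~\ref{Square_Cplx_Thm}: I will attach to $\mathcal{A}$ a square complex $X=X(\mathcal{A})$ satisfying the hypotheses of that theorem, and then read an isolated arc off of the corner or spur it provides. Throughout we may assume $n\ge 2$, since for $n\le 1$ the disk $D_n$ carries no essential arcs. Let $X$ be the complex dual to the arrangement the arcs of $\mathcal{A}$ cut on $D_n$: one $0$-cell for each complementary region, one $1$-cell for each \emph{arc-segment} (a maximal subarc of some $\alpha\in\mathcal{A}$ running between consecutive crossings or endpoints), and one square for each crossing of two arcs, glued so that the two squares at the ends of an arc-segment share the edge dual to it. Placing the dual $0$-cells inside the regions realizes $X$ as a subcomplex of $D_n$, so $X$ is finite and planar. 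Filling in the $n$ punctures does not change $X$ at all, but turns the arrangement into a genuine CW decomposition of a closed disk, whose cell structure dual relative to the boundary is again a disk; hence $X$ is homotopy equivalent to a disk. Since a maximal family is nonempty (as $n\ge 2$), $X$ has at least one edge and so is not a single $0$-cell. Finally, the parallelism class of the edge dual to a segment of $\alpha$ is exactly the set of edges dual to the segments of $\alpha$, so the hyperplanes of $X$ are precisely the arcs of $\mathcal{A}$; each is an embedded arc, and two hyperplanes meet in as many squares as the two arcs cross, hence at most twice. Thus $X$ satisfies ($\ast$), and Theorem~\ref{Square_Cplx_Thm} yields a corner or a spur.

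Suppose first that $X$ has a spur, i.e.\ a $1$-cell $e$ with a vertex $v$ contained in no other cell. Then $v$ lies in no square, so no crossing lies on the boundary of the region $R_v$ dual to $v$; this forces the segment dual to $e$ to be an entire crossing-free arc $\alpha\in\mathcal{A}$, so $\alpha$ is disjoint from every other arc, and, $v$ having valence one, $R_v$ is a half-disk bounded by $\alpha$ and a boundary arc into whose interior no arc of $\mathcal{A}$ enters. Essentiality of $\alpha$ forces $R_v$ to contain a puncture. If it contains exactly one, $\alpha$ is isolated and we are done. If it contains two or more, take a small arc $\delta$ in $R_v$ cutting off a single puncture: it is disjoint from all of $\mathcal{A}$ and essential, so by maximality $\delta$ is homotopic to some $\gamma\in\mathcal{A}$; homotopy-invariance of geometric intersection number (together with the fact that $\mathcal{A}$ is in minimal position) makes $\gamma$ disjoint from every other arc of $\mathcal{A}$, and, being homotopic to $\delta$, it also cuts off a single puncture, so $\gamma$ is isolated.

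Now suppose $X$ has a corner, i.e.\ two consecutive boundary $1$-cells of $X$ contained in a common square $\sigma$. Dually, $\sigma$ is a crossing $x$ of two arcs $\alpha,\beta$; the two boundary edges are dual to \emph{terminal} segments $s_1\subset\alpha$ and $s_2\subset\beta$ (segments running from an endpoint on $\partial D_n$ to their unique crossing $x$ — consecutive edges of a square necessarily come one from each arc); and the shared vertex is a region $R$ with $\partial R=s_1\cup[a,b]\cup s_2$, where $[a,b]$ is an arc of $\partial D_n$, a triangle containing no arc-endpoint and no arc of $\mathcal{A}$ in its interior. If $R$ were unpunctured, sliding the endpoint $a$ across $R$ and past $b$ would be an isotopy of $\alpha$ rel $\partial D_n$ deleting the crossing $x$ and — since $R$ is empty and no arc crosses $s_1$ or $s_2$ — creating no new intersections, contradicting minimal position; so $R$ contains a puncture. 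If it contains at least two, a small arc in $R$ cutting off one of them is, exactly as in the spur case, disjoint from $\mathcal{A}$ and essential, and maximality produces an isolated arc in $\mathcal{A}$. Otherwise $R$ contains exactly one puncture $p$, and pushing the embedded arc $s_1\cup s_2$ slightly into $R$ gives an arc $\delta'$ disjoint from all of $\mathcal{A}$, essential, and cutting off $p$; once more maximality yields $\gamma\in\mathcal{A}$ homotopic to $\delta'$, hence isolated. In every case $\mathcal{A}$ contains an isolated arc.

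The step I expect to be the real work is the construction of $X$ in the first paragraph: one has to check carefully that the arc arrangement gives a genuine CW decomposition of the filled-in disk (all complementary regions are disks) and that passing to the dual relative to the boundary produces something homotopy equivalent to a disk even when two arcs cross twice — there one must verify that the two squares coming from such a pair always share a pair of \emph{adjacent} edges, so the gluing stays a disk, rather than opposite ones. Once $X$ is in place, the corner/spur bookkeeping and the small surgeries are routine, the one recurring subtlety being the use of homotopy-invariance of intersection number to transfer the property ``isolated'' from a newly constructed arc to whichever member of $\mathcal{A}$ it happens to be homotopic to.
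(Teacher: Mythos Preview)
Your proof is correct and follows the same overall strategy as the paper: build the dual square complex $X(\mathcal{A})$, apply Theorem~\ref{Square_Cplx_Thm} to obtain a corner or a spur, and read off an isolated arc.

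The one substantive difference is in the corner case. The paper's Lemma~\ref{Corner_Max}(2) argues that a corner forces $\mathcal{A}$ to be non-maximal (the arc $\partial A_v\setminus\partial D_n$ ``can be added''), so that in a maximal family only the spur case survives. You instead use maximality constructively: the arc $\delta'$ you build from the corner (or, in the multi-puncture case, a small arc cutting off a single puncture) must already be homotopic to some $\gamma\in\mathcal{A}$, and homotopy-invariance of intersection number together with minimal position then makes $\gamma$ itself isolated. Your route has the small advantage of explicitly handling the possibility that the candidate arc is already represented in $\mathcal{A}$, a point the paper's Lemma~\ref{Corner_Max}(2) glosses over when it asserts the arc ``can be added''; in the paper's organization this is harmless because the spur case is treated first and any such $\gamma$ would itself yield a spur, but your version makes the logic self-contained.

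Your closing caveats about the construction of $X(\mathcal{A})$ are well placed: the paper handles them by a picture (the retraction of Figure~\ref{fig:Square_Complex_Retract}) rather than a formal argument, so the level of rigor matches. The observation that two squares coming from a doubly-crossing pair share two \emph{adjacent} edges (meeting at the bigon vertex), hence glue to a hexagon rather than a cylinder, is exactly the check needed and is correct.
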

\begin{Proposition} \label{Bigon_Count} Assume that 
  $\alpha_p\in \mc{A}$ is an isolated arc, and let $p$ be the 
  unique puncture in one of the connected components of $D_n\ssm \alpha_p$. 
  Let $\mc{A}'$ be the set of homotopy classes of essential arcs 
  obtained from $\mc{A}$ by removing $p$. Then
  \begin{align*}
    |\mc{A}| - |\mc{A}'| \le \binom{n}{2}
  \end{align*}
\end{Proposition}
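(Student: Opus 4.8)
The strategy is to fill in the puncture $p$, turning $D_n$ into $D_{n-1}$, and to count precisely which arcs of $\mc{A}$ get destroyed or identified by this operation.

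\emph{Bookkeeping.} Filling in $p$ induces a map $\Phi\colon \mc{A}\to \{\text{homotopy classes of arcs in }D_{n-1}\}\sqcup\{\star\}$, where $\star$ denotes the inessential class; by definition $\mc{A}'=\Phi(\mc{A})\ssm\{\star\}$. Choosing one arc of $\mc{A}$ in each nonempty fiber $\Phi^{-1}(\gamma)$, $\gamma\in\mc{A}'$, I obtain a set of $|\mc{A}'|$ arcs; let $\mc{L}\subseteq\mc{A}$ be the complementary set of ``lost'' arcs, so $|\mc{L}|=|\mc{A}|-|\mc{A}'|$. Three observations set things up. First, since $\alpha_p$ is isolated, every other arc of $\mc{A}$ is disjoint from $\alpha_p$ and hence lies in the closure $\overline{C}$ of the component of $D_n\ssm\alpha_p$ not containing $p$ (an arc lying in the once-punctured component would, by an innermost-disk argument, be homotopic to $\alpha_p$ or inessential, hence not in $\mc{A}$). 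Second, $\Phi(\alpha_p)=\star$, because $\alpha_p$ bounds a disk whose only puncture is the one being filled. Third, $\alpha_p$ is the \emph{only} arc mapped to $\star$: for $\beta\in\mc{A}\ssm\{\alpha_p\}$ the side of $\beta$ away from $\alpha_p$ is unaffected by the filling and still carries a puncture, while the side towards $\alpha_p$ loses $p$ but retains at least one other puncture, since otherwise $\beta$ could be pushed across an unpunctured disk onto $\alpha_p$, contradicting $\beta\not\sim\alpha_p$. Thus $\mc{L}=\{\alpha_p\}\sqcup(\text{the unchosen preimages})$, and it remains to prove $|\mc{L}|\le\binom{n}{2}$.

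\emph{Shape of a fiber.} Fix $\gamma\in\mc{A}'$. After the filling, $\alpha_p$ has become peripheral in $D_{n-1}$ — isotopic to a boundary arc bounding, with it, the unpunctured disk that replaced the component $C_p\ni p$. The preimages of $\gamma$ therefore differ only in how $\gamma$ is routed with respect to the reinstated once-punctured disk: on which side of $\gamma$ the puncture $p$ is reinserted, and with how much winding around $p$. Using simplicity of the arcs together with $|\beta\cap\beta'|\le 2$ for $\beta,\beta'\in\mc{A}$, one shows that any two preimages of $\gamma$ cobound, in $D_n$, a region whose only puncture is $p$ — a once-punctured bigon if they meet twice, a once-punctured rectangle if they are disjoint (the case of a single intersection does not occur). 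Consequently the preimages of $\gamma$ can be linearly ordered $\beta^\gamma_1,\dots,\beta^\gamma_{k_\gamma}$ so that consecutive ones cobound such regions $R^\gamma_1\subset R^\gamma_2\subset\cdots$, a nested chain of once-punctured disks all containing $C_p$.

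\emph{The bigon count, and where the difficulty lies.} Now $|\mc{L}|=1+\sum_{\gamma\in\mc{A}'}(k_\gamma-1)$, the $1$ from $\alpha_p$ and the sum from the unchosen preimages, and I want this at most $\binom{n}{2}$. The plan is to charge each arc of $\mc{L}$ to an unordered pair of punctures of $D_n$, injectively. An unchosen preimage $\beta=\beta^\gamma_i$ ($i\ge 2$) is separated from the previous arc in its chain by the once-punctured region $R^\gamma_i\ssm R^\gamma_{i-1}$; from this annular region, together with the side of $\gamma$ carrying the winding, one reads off two punctures — $p$ itself together with the puncture of $D_{n-1}$ that $\beta$ winds past — while $\alpha_p$ is charged to a suitable distinguished pair containing $p$. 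The substance of the proof is to arrange this recipe so that the resulting map from $\mc{L}$ into the set of $\binom{n}{2}$ pairs of punctures of $D_n$ is injective: a collision would force two arcs of $\mc{A}$ to cross at least three times, or to be homotopic, and both are excluded by the hypotheses on $\mc{A}$, the nesting of Paragraph two feeding directly into the intersection estimate. I expect this step — the actual bigon-counting advertised by the name of the proposition, including the separate treatment of a degenerate ``winding around $p$ alone'' configuration, which may call for its own distinguished pair — to be the main obstacle; the bookkeeping above and the minimal-position/innermost-bigon analysis of the fibers are routine by comparison. Granting the injective charge, $|\mc{A}|-|\mc{A}'|=|\mc{L}|\le\binom{n}{2}$, as required.
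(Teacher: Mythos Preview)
Your bookkeeping is correct and matches the paper: only $\alpha_p$ becomes inessential, and $|\mc{A}|-|\mc{A}'|=1+\sum_\gamma(k_\gamma-1)$. But the proposed resolution of the ``main obstacle'' cannot work as written.

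The problem is the charging scheme. You send each lost arc to a pair $\{p,q\}$, with $q$ ``the puncture of $D_{n-1}$ that $\beta$ winds past.'' Every such pair contains $p$, so there are only $n-1$ of them; your injection would then prove $|\mc{L}|\le n$. But the bound $\binom{n}{2}$ is sharp --- the explicit family of size $\binom{n+1}{3}$ in Section~4 loses exactly $\binom{n}{2}$ arcs upon filling a puncture --- so for $n\ge 4$ there are strictly more lost arcs than pairs containing $p$, and no injection of the kind you describe can exist. There is also a smaller problem with the recipe itself: the annular region $R_i^\gamma\ssm R_{i-1}^\gamma$ is unpunctured, since both $R_i^\gamma$ and $R_{i-1}^\gamma$ contain only $p$, so there is no evident puncture $q$ to read off from it.

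The paper's route is a genuinely different reduction. To each strip $S\in\mc{S}$ containing only $p$ it associates an arc $\delta_S$ from $p$ to $\del D_n$, roughly the core of $S$ on the side away from $\alpha_p$. The substantive lemma is that the collection $\mc{G}=\{\delta_S\}_{S\in\mc{S}}\cup\{\eps\}$ (where $\eps$ is a short arc from $p$ to $\del D_n$ inside the isolated region) consists of arcs from $p$ to the boundary that pairwise intersect \emph{at most once}. One then invokes Przytycki's earlier result for such families to get $|\mc{G}|\le\binom{n}{2}$, hence $|\mc{S}|\le\binom{n}{2}-1$ and $|\mc{L}|\le 1+|\mc{S}|\le\binom{n}{2}$. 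Along the way the paper also pins down the fibers more tightly than your sketch does: under the standing maximality assumption each fiber has size at most $3$, and when the size is $3$ the three arcs are two twice-intersecting arcs in a specific configuration together with one explicit third arc lying between them. So the ``nested chain'' picture is correct but very short, and the real content is the at-most-once intersection property of the associated half-arcs, not an injection into puncture pairs.
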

Succinctly, the deduction arguments of this paper are as follows:
\begin{align*}
  Theorem~\ref{Square_Cplx_Thm} 
  \Rightarrow &Proposition~\ref{Cloud_Prop}\\
  Proposition~\ref{Bigon_Count}+ &Proposition~\ref{Cloud_Prop} 
  \Rightarrow Theorem~\ref{Main_Theorem}
\end{align*}

\begin{proof} [Proof of Theorem~\ref{Main_Theorem} from the propositions]
  The proof follows by induction. The base case, where 
  $n=1$ is obvious. By Proposition~\ref{Cloud_Prop}, there 
  exists an isolated arc $\alpha_p$. 
  After removing $p$ and identifying arcs that become 
  homotopic to each other, we will 
  get a new family of arcs $\mathcal{A}'$ on $D_{n-1}$. By induction, 
  $|\mathcal{A}'| \le \binom{n}{3}$, and thus, 
  by Proposition~\ref{Bigon_Count},
  \begin{align*}
    |\mathcal{A}| \le |\mathcal{A}'| + \binom{n}{2}\le \binom{n+1}{3},
  \end{align*}
  which is what we want to show.
\end{proof}
We now show how Proposition~\ref{Cloud_Prop} follows from 
Theorem~\ref{Square_Cplx_Thm}:
\begin{Definition}
  Let $\mc{A}$ be a good family of arcs. After a small 
  perturbation, we can assume that there are no triple 
  intersections of arcs in $\mc{A}$. Consider the square 
  complex, $X(\mc{A})$, \emph{dual} to $\mc{A}$: each vertex 
  in $X(\mc{A})$ corresponds to a connected component of 
  $D_n\ssm \cup \mc{A}$, and edges correspond to arc segments 
  in the boundary of adjacent components. 
\end{Definition}
\begin{Remark}
  If $\mc{A}$ is any family of arcs starting and ending 
  at $\del D_n$, then $X(\mc{A})$ is homotopy equivalent to 
  a disk, using the retraction as in 
  Figure~\ref{fig:Square_Complex_Retract}
    \begin{center}
      \begin{figure}
        \includegraphics[width=0.15\linewidth]{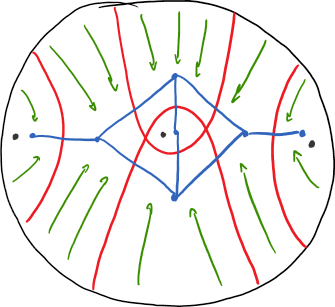}
        \caption{A retraction from the disk to $X(\mc{A})$ 
        where $\mc{A}$ is a good family (in red) on $D_3$ (punctures 
        in black).}
        \label{fig:Square_Complex_Retract}
      \end{figure}
    \end{center}
\end{Remark}
\begin{Lemma} \label{Corner_Max}
  \begin{enumerate}
    \item If $X(\mc{A})$ has a spur, then $\mc{A}$ has an isolated arc 
      or is not maximal.
    \item If $X(\mc{A})$ has a corner, then $\mc{A}$ is not maximal.
  \end{enumerate}
\end{Lemma}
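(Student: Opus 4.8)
The plan is to read the combinatorial words \emph{spur} and \emph{corner} back into the geometry of $D_n$ through the standard dictionary, and then, in each case, either to point to an isolated arc or to enlarge $\mc{A}$. Recall this dictionary: a vertex of $X(\mc{A})$ is a component of $D_n\ssm\bigcup\mc{A}$, an edge is an arc segment running between two consecutive crossings or between a crossing and $\del D_n$, and a square is a crossing of two arcs; an edge lying in at most one square is dual to a segment with no crossing in its interior, so the boundary edges incident to a spur or a corner are \emph{uncrossed} segments running straight to $\del D_n$.

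For part (1): a spur at $v$ forces the region $R_v$ to have exactly one segment on its boundary, and that segment meets no crossing, so it is a whole arc $\alpha\in\mc{A}$ that is disjoint from every other arc of $\mc{A}$; thus $R_v$ is a disk bounded by $\alpha$ together with a sub-arc of $\del D_n$, carrying some number $k$ of punctures, and $\alpha$ being essential gives $k\ge1$ (and a puncture on the far side). If $k=1$ then $\alpha$ splits $D_n$ into $R_v$, with its single puncture, and the complement, so $\alpha$ is isolated, and we are done. If $k\ge2$ I would produce a new essential arc: the interior of $R_v$ is disjoint from all of $\mc{A}$, so either some essential arc supported in $R_v$ with endpoints on the boundary sub-arc is not yet in $\mc{A}$ and may be adjoined, or $\mc{A}$ is already maximal inside $R_v$, in which case I would adjoin an arc $\delta$ that runs from $\del D_n$ into $R_v$, isolates one puncture of $R_v$, crosses $\alpha$ once, and ends after isolating a puncture on the far side; either way $\mc{A}\cup\{\delta\}$ is good, so $\mc{A}$ is not maximal.

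For part (2): a corner at $v$ means there are arcs $\alpha,\beta\in\mc{A}$ crossing at a point $x$ such that the uncrossed half-segments of $\alpha$ and of $\beta$ emanating from $x$ into $R_v$ both run to $\del D_n$; so $R_v$ is a disk with $k$ punctures bounded by those two half-segments, the point $x$, and a sub-arc $J$ of $\del D_n$. First I would rule out $k=0$: then $R_v$ is an empty half-bigon, and sliding the endpoint of $\alpha$ along $J$ past the endpoint of $\beta$ sweeps $R_v$ and removes the crossing $x$ while creating no new intersection — nothing enters $R_v$ and no arc of $\mc{A}$ has an endpoint on $J$ — which contradicts $|\alpha\cap\beta|$ being minimal. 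So $k\ge1$, and I would try to adjoin the ``corner-cutting'' arc $\delta$ running just on the far side of $\alpha$ parallel to its half-segment, around $x$, and then parallel to the half-segment of $\beta$ out to $\del D_n$: this $\delta$ bounds with $\del D_n$ a disk containing exactly the punctures of $R_v$, meets each of $\alpha$ and $\beta$ at most once, and is disjoint from every other arc of $\mc{A}$, so $\mc{A}\cup\{\delta\}$ is good provided $\delta$ is essential (true since $k\ge1$) and $\delta$ is not already a member of $\mc{A}$.

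The last proviso is where the real work lies, and I expect it to be the main obstacle. The arc $\delta$ above, like any arc confined near $R_v$, can be tightened to a loop around the $k$ punctures of $R_v$; in the corner case with $k=1$ this is the arc $\sigma_p$ around the unique puncture $p$, and if $\sigma_p$ already lies in $\mc{A}$ nothing supported near $R_v$ can be added. Here I would instead reroute $\alpha$ past $\sigma_p$: surger $\alpha$ and $\sigma_p$ at one intersection point, take the resolution that carries $p$ to the other side of $\alpha$, and then pass to the minimal-position representative of the resulting simple arc $\delta'$; one must then show that $\delta'$ still meets every member of $\mc{A}$ at most twice — the crude surgery bound only gives four, so this needs a genuine analysis of how $\alpha$ and $\sigma_p$ sit relative to $R_v$ and to each other arc — and that $\delta'$ represents a homotopy class not already present. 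A careful figure-chase is unavoidable here, and it is conceivable that one must instead argue indirectly, e.g.\ that a counterexample to maximality in this case would force another corner or spur of $X(\mc{A})$ that does yield a new arc. The analogous subtlety in the spur case, when $\mc{A}$ is already maximal inside $R_v$, is handled by the cross-$\alpha$ arc described above, whose intersection count is the corresponding technical point.
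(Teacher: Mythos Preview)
Your overall strategy matches the paper's: translate the spur/corner into a region of $D_n$, rule out the empty case by minimal position, and in the punctured case exhibit a new arc. The paper's proof is essentially your first paragraph of each part and nothing more. Where you diverge is in manufacturing obstacles that do not actually arise, and then proposing elaborate and incomplete workarounds for them.

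For part (1), your fallback case ``$\mc{A}$ is already maximal inside $R_v$'' never occurs. The region $R_v$ is a component of $D_n\setminus\bigcup\mc{A}$, so it contains no arc of $\mc{A}$ at all; and any arc $\beta\subset R_v$ that separates two punctures of $R_v$ has those punctures on opposite sides, whereas every arc of $\mc{A}\setminus\{\alpha\}$ lies in the complementary component and therefore has all punctures of $R_v$ on a single side. So $\beta$ is not homotopic to anything in $\mc{A}$, and the cross-$\alpha$ construction is unnecessary. This is exactly the paper's argument (their Figure~\ref{fig:Spur_Side_Punctures}).

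For part (2), you are right that the paper glosses over why the corner-cutting arc $\delta$ is not already represented in $\mc{A}$, but you vastly overestimate the difficulty. Suppose $\gamma\in\mc{A}$ were homotopic to $\delta$. Then $\gamma$ is disjoint from both $\alpha$ and $\beta$ in minimal position, and $\gamma\notin A_v$. Let $U$ be the side of $\gamma$ containing the punctures of $A_v$; since $\gamma\sim\delta$, the only punctures in $U$ are those of $A_v$. Because $\alpha$ is connected, disjoint from $\gamma$, and meets $\overline{A_v}\subset\overline U$, all of $\alpha$ lies in $\overline U$. Now $\alpha$ separates $D_n$ into two sides; the side not containing $A_v$ must contain the other component $V=D_n\setminus\overline U$ (otherwise $\alpha$ would be inessential), so the $A_v$-side of $\alpha$ is contained in $U$ and hence carries exactly the punctures of $A_v$. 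Thus $\alpha$ and $\gamma$ induce the same partition of punctures, so $\alpha\sim\gamma$ --- contradicting that $\mc{A}$ is good. No surgery, rerouting, or figure-chase is needed; your last two paragraphs can simply be deleted.
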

\begin{proof}
  \begin{enumerate}
    \item A spur corresponds to a region $A$ that is adjacent to a 
      single other region $B$ along a single arc $\alpha$. This 
      means that $\del A \cap \del B=\alpha$, and thus, 
      $\alpha$ is disjoint from any other arc in $\mc{A}$. 
      Since $\alpha$ is essential, 
      $A$ contains at least one puncture. Since $X(\mc{A})$ has a 
      spur terminating at $A$, it follows that $\mc{A}\ssm \alpha$ 
      is contained in $B$. If $A$ has at least two punctures, this 
      contradicts maximality of $\mc{A}$ (see 
      Figure~\ref{fig:Spur_Side_Punctures}).
  \begin{center}
    \begin{figure}
      \includegraphics[width=0.2\linewidth]{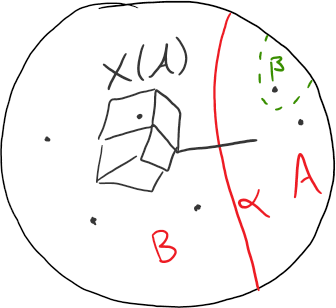}
      \caption{The region $A$ cannot contain more than one puncture, or 
      else the arc $\beta$ in the figure could be added to $\mc{A}$.}
      \label{fig:Spur_Side_Punctures}
    \end{figure}
  \end{center}
    \item Let $e_1,e_2$ be the edges forming a corner at a vertex $v$ in 
      $X(\mc{A})$. Let $h_1,h_2$ be the hyperplanes determined by 
      $e_1,e_2$ (see Definition~\ref{Hyperplane_Def}). 
      Consider the region in $D_n\ssm \cup \mc{A}$ corresponding to $v$, 
      call it $A_v$ (see Figure~\ref{fig:Corner_Contradicts_Min}).
  \begin{center}
    \begin{figure}
      \includegraphics[width=0.3\linewidth]{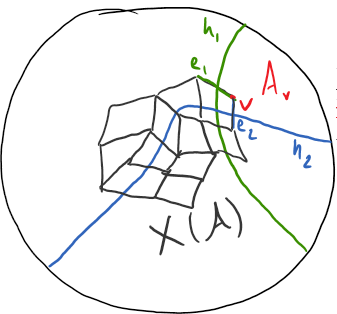}
      \caption{The region $A_v$ defined from the corner at $v$.}
      \label{fig:Corner_Contradicts_Min}
    \end{figure}
  \end{center}
      If $A_v$ does not contain a puncture, then the arcs 
      corresponding to $h_1,h_2$ will not be in minimal position. If 
      $A_v$ does contain a puncture, then the arc $\del A_v\ssm (\del D_n)$ 
      is disjoint from all other arcs in $\mc{A}$ after a homotopy, 
      and can be added to the family. Thus, $\mc{A}$ is not 
      maximal, contradiction.
  \end{enumerate}
\end{proof}
\begin{proof} [Proof of Proposition~\ref{Cloud_Prop} from 
  Theorem~\ref{Square_Cplx_Thm}]
  If $\mc{A}$ has a spur, then we are done by Lemma~\ref{Corner_Max}(1). 
  Otherwise, $X(\mc{A})$ has a corner, which by Lemma~\ref{Corner_Max}(2)
  contradicts maximality.
\end{proof}
\mute{
\begin{Lemma} \label{Pascal_Calculation} If $k,l\ge 1$, then 
  \begin{align*}
    \binom{k+2}{3}+\binom{l+2}{3} -1\le \binom{k+l+1}{3}
  \end{align*}
\end{Lemma}
\begin{proof}
  We prove the lemma by induction on $k$ and $l$. The case $k=l=1$ is 
  immediate. Now, adding $\binom{k+2}{2}$ to the induction 
  hypothesis on $(k,l)$, we get:
  \begin{align*}
    \binom{k+1+2}{3}+\binom{l+2}{3}
    &=\binom{k+2}{2}+\binom{k+2}{3}+\binom{l+2}{3}+1\\
    &\le \binom{k+l+1}{3}+\binom{k+2}{2}+1\\
    &\le \binom{k+l+1}{3}+\binom{k+l+1}{2}+1=\binom{(k+1)+l+1}{3}+1
  \end{align*}
  showing that the inequality is true for $(k+1,l)$. The 
  same argument works if we interchange $k$ and $l$.
\end{proof}
\begin{proof}[Proof of Theorem~\ref{Main_Theorem}]
  We prove the theorem by induction, where the case $n=2$ is trivial. 
  By Proposition~\ref{Cloud_Prop}, there exists an isolated arc $\alpha$, 
  splitting $D_n$ into two connected components, $D^0$ and $D^1$, 
  containing $n^0$ and $n^1$ punctures respectively. Let 
  $\mc{A}^i\subset \mc{A}$ be the subcollections of arcs contained 
  in $D^i$ for $i=0,1$, including $\alpha$. Let 
  $(\mc{A}^i)'$ be the family of arcs in $\mc{A}^i$ that remain essential 
  after removing all of the punctures in $D^{1-i}$.
  Replacing $D^{1-i}$ by a single punctured disk, by 
  Proposition~\ref{Bigon_Count}, we get 
  $|\mc{A}^i| \le |(\mc{A}^i)'|+\binom{n^i+1}{2}$. Since 
  $(\mc{A}^i)'$ is a good family of arcs on $D_{n^i+1}$, by 
  induction, $|(\mc{A}^i)'|\le \binom{n^i+1}{3}$, so 
  $|\mc{A}^i|\le \binom{n^i+2}{3}$. Now, by 
  Lemma~\ref{Pascal_Calculation}, we obtain:
  \begin{align*}
    |\mc{A}| = |\mc{A}^0|+|\mc{A}^1|-|\mc{A}^0\cap\mc{A}^1|
    &\le\binom{n^0+2}{3}+\binom{n^1+2}{3}-1\le \binom{n^0+n^1+1}{3}
  \end{align*}
  as desired. This bounds from above the maximal cardinality of a 
  good family of arcs, the lower bound will be given in Section 4
\end{proof}
}

\section{Helpful Lemmas}
In this section, we present some helpful lemmas and definitions 
used throughout this paper to prove Proposition~\ref{Bigon_Count}.
\begin{Definition} Here, we regard the punctures as marked 
  points. Let $\alpha,\beta$ be arcs in $D_n$.

  A \emph{bigon} (resp.\ \emph{half-bigon}, \emph{strip}) 
  between $\alpha$ and $\beta$ is 
  a connected component $D$ of $D_n\ssm(\alpha\cup\beta)$ which 
  is 
  \begin{enumerate}
    \item homeomorphic to a disk, 
      \mute{item is not (resp.\ is) adjacent to $\del D_n$, 
      and}
    \item $\alpha\cap \bar D,\beta\cap \bar D$ 
      are nonempty and connected, and $D\cap \del D_n$ is 
      empty (resp.\ nonempty and connected, disconnected).
  \end{enumerate}
    \begin{center}
      \begin{figure}
        \includegraphics[width=0.5\linewidth]{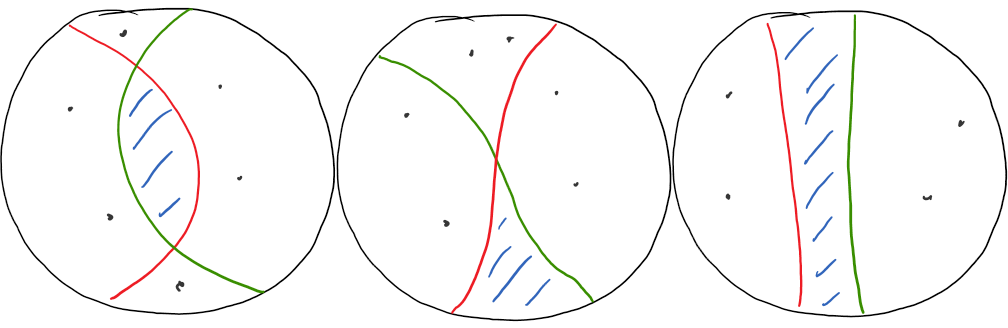}
        \caption{From left to right: a bigon, half bigon, and strip.}
        \label{fig:Bigon_Example}
      \end{figure}
    \end{center}
\end{Definition}
\begin{Definition}
  We say that two arcs $\alpha,\beta$ on $D_n$ are in \emph{minimal 
  position} if they minimize $|\alpha\cap\beta|$ in their 
  respective homotopy classes.
\end{Definition}
\mute{In many cases, it will be convenient to say that a bigon (resp.\
half-bigon, strip) contains a 
puncture $p$. By this we mean that if we remove all punctures from 
$D_n$, then there would be a bigon (resp.\ half-bigon, strip) 
which, when putting the punctures back, would contain $p$.}
\begin{Lemma} \label{Bigon_Criterion}
  (The Bigon Criterion, Lemma 1.7 in \cite{Farb})
  Two arcs are in minimal position if and only if they intersect 
  transversely, and there are 
  no bigons or half-bigons between them.
\end{Lemma}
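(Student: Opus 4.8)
The plan is to prove the two implications separately; the forward one---minimal position implies transversality and the absence of bigons and half-bigons---is the routine surgery direction, so I would argue its contrapositive. If $\alpha$ and $\beta$ are not transverse, a $C^\infty$-small perturbation of $\alpha$ near a tangency makes the pair transverse without creating intersection points, so it was not of minimal complexity. If they bound a bigon $D$, then $\partial D=a\cup b$ with $a\subset\alpha$ and $b\subset\beta$ two arcs meeting at the two corners of $D$, and an isotopy of $\alpha$ supported near $D$ that sweeps $a$ across $D$ and past $b$ removes both corners from $\alpha\cap\beta$ and creates nothing new, lowering $|\alpha\cap\beta|$ by $2$. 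If they bound a half-bigon $D$, then $\partial D=a\cup b\cup c$ with $a\subset\alpha$, $b\subset\beta$, $c\subset\partial D_n$, where $a$ and $b$ meet in a single transverse point and their other endpoints are an endpoint of $\alpha$ and an endpoint of $\beta$ respectively, joined along $\partial D_n$ by $c$; a homotopy of $\alpha$ relative to $\partial D_n$ that sweeps $a$ across $D$ while sliding the endpoint of $\alpha$ on $c$ past the opposite vertex removes the single point $a\cap b$ and creates nothing, lowering $|\alpha\cap\beta|$ by $1$. In each case $\alpha$ and $\beta$ were not in minimal position.

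For the reverse implication I would use hyperbolic geometry. Put a complete finite-area hyperbolic metric on $D_n$ with totally geodesic boundary and a cusp at each puncture, and let $\hat\alpha$ and $\hat\beta$ be the geodesic representatives of $\alpha$ and $\beta$, chosen orthogonal to $\partial D_n$ at their endpoints. These are in minimal position: they are transverse; a bigon between them would be an embedded disk bounded by two geodesic segments with the same two endpoints, which is impossible because geodesics between two points of the CAT$(0)$ universal cover are unique; and a half-bigon between them would be a geodesic triangle with interior angle $\pi/2$ at each of its two vertices on $\partial D_n$ (by the orthogonality) together with a positive angle at its interior vertex, giving angle sum larger than $\pi$ and hence, by Gauss--Bonnet, negative area---absurd. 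Thus $|\hat\alpha\cap\hat\beta|$ equals the geometric intersection number, and the lemma reduces to showing that a transverse pair with no bigon and no half-bigon satisfies $|\alpha\cap\beta|=|\hat\alpha\cap\hat\beta|$.

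To prove that equality I would lift the picture to the universal cover $\widetilde{D_n}\subset\mathbb{H}^2$, a convex region bounded by the geodesic lines covering $\partial D_n$. Fix a lift $\tilde\alpha$ of $\alpha$; since $\pi$ restricts to a homeomorphism $\tilde\alpha\to\alpha$, the set $\tilde\alpha\cap\pi^{-1}(\beta)$ is in bijection with $\alpha\cap\beta$, and the straightening homotopy of $\alpha$ lifts to connect $\tilde\alpha$ to a geodesic lift $\widehat{\tilde\alpha}$ with the same two bounding lines, and similarly for each lift of $\beta$. If $|\alpha\cap\beta|>|\hat\alpha\cap\hat\beta|$, then summing intersections of $\tilde\alpha$ with the lifts of $\beta$ and comparing to the geodesic count---where two geodesic lifts meet iff their bounding lines are linked, and then in exactly one point---forces either some lift $\tilde\beta$ meeting $\tilde\alpha$ at least twice, so that two consecutive crossings bound a bigon in $\widetilde{D_n}$, or some $\tilde\beta$ meeting $\tilde\alpha$ exactly once while $\widehat{\tilde\alpha}$ and $\widehat{\tilde\beta}$ stay disjoint, which can happen only if an endpoint of each lies on a common boundary line and the endpoint-sliding reversed their order there, in which case $\tilde\alpha$, $\tilde\beta$ and that boundary line cut off a half-bigon in $\widetilde{D_n}$. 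Choosing such a region innermost among its $\pi_1(D_n)$-translates, it meets no further lift of $\alpha$ or $\beta$---a stray lift would be disjoint from the lifts of the simple arc it covers and so would have to cross one of the two bounding subarcs twice, producing a strictly smaller such region---so it projects homeomorphically onto a bigon or half-bigon between $\alpha$ and $\beta$ in $D_n$, contradicting the hypothesis.

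The step I expect to be the real obstacle is this final innermost-region analysis: checking that the bigon or half-bigon found upstairs descends to a genuinely embedded one downstairs, with no self-overlap of the projection and no folding by the deck group. This is the classical technical core of the bigon criterion. An alternative that outsources it to the known closed-surface version is to double $D_n$ along $\partial D_n$ to the $2n$-punctured sphere, where $\alpha$ and $\beta$ become reflection-symmetric simple closed curves; applying the standard bigon criterion there and translating back, a symmetric bigon disjoint from the reflection axis descends to a bigon in $D_n$ and one meeting the axis descends to a half-bigon---at the cost of an equivariant minimal-position argument of comparable delicacy.
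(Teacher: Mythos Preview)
The paper does not prove this lemma at all: it is stated with a citation to Farb--Margalit and used as a black box, so there is no ``paper's own proof'' to compare against. Your argument is essentially the standard one from that reference---surgery across a bigon or half-bigon for the easy direction, and a hyperbolic-geometry/universal-cover argument for the hard direction---and it is correct in outline.

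A few small points. In the half-bigon removal you write ``a homotopy of $\alpha$ relative to $\partial D_n$'' and then slide the endpoint of $\alpha$ along $\partial D_n$; this is consistent with the paper's convention (homotopy of maps of pairs, endpoints free to move along the boundary) but is worth saying explicitly, since ``relative to $\partial D_n$'' often means fixing endpoints. Your Gauss--Bonnet argument ruling out half-bigons between orthogonal geodesics is clean and correct. The hyperbolic metric you invoke requires $\chi(D_n)<0$, i.e.\ $n\ge 2$; the case $n=1$ is trivial since there is a unique essential arc class. Finally, your honest flag on the innermost-region step is well placed: that is indeed where the work lies, and your sketch (choose an innermost bigon/half-bigon among translates, argue no stray lift enters it, hence it embeds downstairs) is the right shape; the doubling alternative you mention is also viable but, as you note, trades one delicate point for another.
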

\begin{Corollary}\label{One_Intersection_Lemma}
  Let $\alpha$ be an arc with endpoints on $\del D_n$ and $\beta$ 
  an arc with endpoints outside of $\del D_n$. If $\alpha$ 
  and $\beta$ intersect once and transversely, then 
  they are in minimal position.
\end{Corollary}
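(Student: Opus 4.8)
The plan is to invoke the Bigon Criterion (Lemma~\ref{Bigon_Criterion}): since $\alpha$ and $\beta$ already meet transversely, it suffices to rule out bigons and half-bigons between them. First I would observe that any bigon or half-bigon $D$ has its closure $\bar D$ meeting $\alpha$ in a connected sub-arc and $\beta$ in a connected sub-arc, and that the two endpoints of $\alpha\cap\bar D$ coincide with the two endpoints of $\beta\cap\bar D$; these common endpoints are points of $\alpha\cap\beta$ (in the half-bigon case, one of the two corners of $D$ is instead a point of $\del D_n$, but the other corner is still a genuine intersection point). In either case, a bigon contributes two points to $\alpha\cap\beta$ and a half-bigon contributes at least one.

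Next I would use the hypothesis $|\alpha\cap\beta| = 1$. A bigon is immediately excluded because it would force at least two intersection points. For a half-bigon $D$, the single intersection point $x$ is one corner of $D$; the other corner is a point $y\in\del D_n$, and $D\cap\del D_n$ is a connected arc of $\del D_n$ joining... here is the key step, and the main obstacle: I must pin down which of $\alpha,\beta$ forms the side of the half-bigon running along $\del D_n$. Since $\beta$ has \emph{both} endpoints outside $\del D_n$ (at punctures or in the interior away from the boundary), $\beta$ never touches $\del D_n$; hence the boundary side of the half-bigon and the two corners cannot lie on $\beta$. Thus the corner $y\in\del D_n$ must be an endpoint of $\alpha$, and the two sides of $D$ other than the $\del D_n$-side are a sub-arc of $\alpha$ and a sub-arc of $\beta$ meeting at the single corner $x$. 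But then following $\beta$ from $x$ along $\del D$ we reach $y\in\del D_n$, which is impossible since $\beta$ stays away from $\del D_n$ — so the $\beta$-side of $D$ would have to be all of $\beta$, yet $\beta$'s endpoints are not on $\del D_n$, contradiction.

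Therefore there are no bigons or half-bigons, and by Lemma~\ref{Bigon_Criterion}, $\alpha$ and $\beta$ are in minimal position. I expect the only delicate point to be the careful bookkeeping of the corners of a half-bigon and the use of the asymmetry in the hypotheses (endpoints of $\alpha$ on $\del D_n$, endpoints of $\beta$ off $\del D_n$) to locate the boundary side; once that is set up, the count $|\alpha\cap\beta|=1$ closes the argument immediately.
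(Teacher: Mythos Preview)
Your proposal is correct and is precisely the argument the paper intends: the corollary is stated there without proof, as an immediate consequence of Lemma~\ref{Bigon_Criterion}, and your case analysis (a bigon would force two intersection points; a half-bigon would force $\beta$ to meet $\partial D_n$) is exactly the expected verification. One minor wobble: in the paper's convention a half-bigon has \emph{three} sides --- sub-arcs of $\alpha$, of $\beta$, and of $\partial D_n$ --- and hence three corners rather than two, but this does not affect your argument, since the contradiction still comes from the fact that the $\beta$-side would have to terminate on $\partial D_n$.
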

\begin{Definition}
  We say that an arc $\alpha$ \emph{separates} two punctures $q,r$ if 
  $D_n\ssm \alpha$ is disconnected and $p$ and $r$ lie on 
  different connected components of $D_n\ssm \alpha$.
\end{Definition}
\begin{Remark2} \label{Essential_Separates}
  An arc with endpoints on $\del D_n$ is essential if and 
  only if it separates some punctures.
\end{Remark2}
\mute{
\begin{Definition}
  An \emph{isolated puncture} $p$ with respect to a family 
  of arcs $\mc{A}$ is a puncture in $D_n$ such 
  that there exists an arc in $\mc{A}$, call it $\alpha_p$, such that 
  $\alpha_p$ is disjoint from all other arcs in $\mc{A}$, and 
  $D_n\ssm\alpha_p$ has a connected component, called 
  the \emph{isolated disk}, which is a disk with 
  the puncture $p$.
\end{Definition}
}

\section{Examples}
We now give some explicit constructions for maximal good families of 
arcs, showing the lower bound in Theorem~\ref{Main_Theorem}.

We will think of $D_n$ as $\R^2$, with punctures placed on the $x$-axis at 
$\frac12,\frac32,\hdots,n-\frac12$. 
For simplicity, we will name the punctures by their 
$x$-coordinates. In this 
model, the arcs we are interested in are arcs in the plane starting 
and ending at $\infty$, where $\infty$ can be identified 
as the specified start and end puncture in the one-point 
compactification of $\R^2$.
For any $0\le a<b<c\le n$, let $\alpha_{abc}$ be the arc given by the 
graph of the function $(t-a)(t-b)(t-c)$.

Let $\mc{A}$ be the set of all arcs $\alpha_{abc}$. Obviously, 
$|\mc{A}| = \binom{n+1}{3}$.
\begin{Lemma}
  The arcs in $\mc{A}$ pairwise intersect at most twice.
\end{Lemma}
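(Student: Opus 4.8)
The plan is to use the defining feature of the family: each arc $\alpha_{abc}$ is the graph of the monic cubic $f_{abc}(t)=(t-a)(t-b)(t-c)$, and a graph meets every vertical line in exactly one point. Hence, for two distinct admissible triples, a point of $\R^2$ lies on both $\alpha_{abc}$ and $\alpha_{a'b'c'}$ precisely when it has the form $(t,f_{abc}(t))$ with $f_{abc}(t)=f_{a'b'c'}(t)$, so the number of intersection points of the two arcs equals the number of real roots of $g(t):=f_{abc}(t)-f_{a'b'c'}(t)$.

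First I would observe that $g$ has degree at most $2$, since the leading $t^3$-terms of the two monic cubics cancel. Next I would check that $g$ is not identically zero: otherwise $f_{abc}$ and $f_{a'b'c'}$ would coincide as polynomials and hence have the same (simple) roots, which, together with $a<b<c$ and $a'<b'<c'$, forces $(a,b,c)=(a',b',c')$, contradicting that the two arcs are distinct. A nonzero polynomial of degree at most $2$ has at most two real roots, so the two arcs meet in at most two points, which is the claim. Ruling out $g\equiv 0$ is the only step where care is genuinely needed, and it is exactly the place where the hypothesis that the triples — equivalently, the homotopy classes — are distinct gets used; I do not anticipate any other obstacle.

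A few routine remarks complete the picture. The shared endpoints of all these arcs lie ``at infinity'': since $f_{abc}(t)/t\to\pm\infty$, both ends of every $\alpha_{abc}$ approach the same two points of $\partial D_n$, so they are not counted among the $|\alpha_{abc}\cap\alpha_{a'b'c'}|$ interior intersections, and the root count above is precisely what is wanted. Each intersection point also lies in $D_n$ rather than at a puncture, because an arc meets the $x$-axis only at the integers $a,b,c$ while the punctures sit at the half-integers $\tfrac12,\dots,n-\tfrac12$. Finally, a double root of $g$ gives a tangency rather than a transverse crossing, but this is still a single point and can be removed or split into two transverse intersections by an arbitrarily small perturbation, so it does not affect the bound of two.
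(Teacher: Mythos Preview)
Your proof is correct and follows essentially the same approach as the paper: both observe that the two arcs are graphs of monic cubics, so their intersections are the real roots of the difference, a nonzero polynomial of degree at most two. Your write-up is somewhat more careful than the paper's in explicitly arguing that the difference is not identically zero and in noting that the shared endpoints at infinity are not counted, but the underlying idea is identical.
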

\begin{proof}
  If $(a,b,c)\neq(a',b',c')$, then the arc $\alpha_{abc}$ 
  is given by the graph of $t^3+p(t)$ and the 
  arc $\alpha_{a'b'c'}$ is given by the graph of $t^3+q(t)$ 
  where $p(t)\neq q(t)$ are two quadratic polynomials. 
  The graphs of these polynomials 
  will intersect at the solutions of $q(t)-p(t)=0$, 
  of which there are at most $2$, as $q(t)-p(t)=0$ 
  is a quadratic equation.
\end{proof}
\begin{Lemma}
  The arcs in $\mc{A}$ are essential and 
  not homotopic to each other.
\end{Lemma}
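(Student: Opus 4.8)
My plan is to read off from each $\alpha_{abc}$ the pattern recording which punctures lie above its graph and which lie below; this pattern will both exhibit essentiality, via Remark~\ref{Essential_Separates}, and recover the triple $(a,b,c)$. Concretely, write $g_{abc}(t)=(t-a)(t-b)(t-c)$, so that $\alpha_{abc}$ is the graph of $g_{abc}$ and the plane is partitioned into $\{y>g_{abc}(t)\}$, the curve $\alpha_{abc}$, and $\{y<g_{abc}(t)\}$. Since $a<b<c$ are integers while the punctures sit at the half-integers $k-\tfrac12$ for $1\le k\le n$, no puncture lies on $\alpha_{abc}$, and the $k$-th puncture lies in the region $\{y<g_{abc}(t)\}$ exactly when $g_{abc}(k-\tfrac12)>0$. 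From the sign pattern of a monic cubic — negative on $(-\infty,a)$, positive on $(a,b)$, negative on $(b,c)$, positive on $(c,\infty)$ — the $k$-th puncture lies below $\alpha_{abc}$ precisely when $k\in\{a+1,\dots,b\}\cup\{c+1,\dots,n\}$. I record this as a word $\tau(a,b,c)\in\{+,-\}^{n}$ equal to $-$ in the first $a$ slots, $+$ in the next $b-a$ slots, $-$ in the next $c-b$ slots, and $+$ in the last $n-c$ slots, a $+$ in slot $k$ meaning the $k$-th puncture lies below the graph.

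Essentiality is then immediate: the block $\{a+1,\dots,b\}$ is nonempty (as $a<b$) and contained in $\{1,\dots,n\}$ (as $0\le a\le n-2$), and likewise $\{b+1,\dots,c\}$ is a nonempty subset of $\{1,\dots,n\}$, so at least one puncture lies below $\alpha_{abc}$ and at least one lies above. A function graph cuts the plane into exactly two regions, each of which stays connected after deleting finitely many points, so $D_n\ssm\alpha_{abc}$ has exactly these two components and the two punctures just produced lie in different ones; thus $\alpha_{abc}$ separates a pair of punctures, and by Remark~\ref{Essential_Separates} it is essential.

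For pairwise non-homotopy I would argue in two steps. Invariance: all the $\alpha_{abc}$ run off to infinity along the same two vertical directions, so in the relevant compactification they all join the same two points of $\del D_n$; fixing, for each $k$, a properly embedded vertical reference arc $\ell_k$ lying just to the left of the $k$-th puncture and joining those same two points, I attach to any simple essential arc $\gamma$ with those endpoints the word $\tau(\gamma)\in\{+,-\}^{n}$ whose $k$-th entry is determined by the winding number about the $k$-th puncture of the loop $\gamma\ast\overline{\ell_k}$ (which misses every puncture). This $\tau(\gamma)$ is then a homotopy invariant by construction, and a short computation gives $\tau(\alpha_{abc})=\tau(a,b,c)$ as above. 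Injectivity: the set of indices with entry $+$ in $\tau(a,b,c)$, namely $\{a+1,\dots,b\}\cup\{c+1,\dots,n\}$, is a union of at most two runs of consecutive integers, the upper of which (when present) ends at $n$, so reading off those runs recovers $a,b,c$ uniquely — equivalently, the run-lengths $a,\ b-a,\ c-b,\ n-c$ of the word $\tau(a,b,c)$ determine $a<b<c$, the two middle runs being nonempty. Hence $(a,b,c)\mapsto\tau(a,b,c)$ is injective, so distinct triples produce arcs with distinct invariants, in particular non-homotopic arcs.

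The step I expect to be the real obstacle is the invariance of $\tau$: one must pin down, in the model in which arcs end at infinity, that a homotopy relative to $\del D_n$ genuinely cannot carry the arc from one side of a puncture to the other. This is strictly more information than the unordered partition of the punctures records — for instance, on $D_3$ the arcs $\alpha_{012}$ and $\alpha_{123}$ induce the same partition of $\{p_1,p_2,p_3\}$ but opposite words $\tau$ — which is why a winding-number argument, rather than a bare separation argument, is needed; with that in place the remaining steps are routine bookkeeping.
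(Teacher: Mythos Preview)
Your argument is correct and takes a genuinely different route from the paper's.

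The paper proves non-homotopy by computing the geometric intersection numbers $|\alpha_{abc}\cap\gamma_i|$ with the horizontal segments $\gamma_i$ joining consecutive punctures $i-\tfrac12$ and $i+\tfrac12$ (for $1\le i\le n-1$): using Corollary~\ref{One_Intersection_Lemma} one sees these are in minimal position, and $|\alpha_{abc}\cap\gamma_i|=1$ exactly when $i\in\{a,b,c\}$, so these numbers recover $\{a,b,c\}\cap\{1,\dots,n-1\}$. This almost pins down the triple, but fails to distinguish, for instance, $\alpha_{0,b,c}$ from $\alpha_{b,c,n}$; the paper then finishes that leftover case with an explicit bigon-criterion check.

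Your invariant $\tau$ carries strictly more information --- essentially the full sign sequence of $g_{abc}$ at the half-integers rather than just its sign \emph{changes} --- and this is exactly why you avoid the residual case: the paper's intersection data is the ``discrete derivative'' of your $\tau$ (namely $|\alpha\cap\gamma_i|=1$ iff $\tau_i\neq\tau_{i+1}$), so it determines $\tau$ only up to a global sign flip, whereas your $\tau$ is injective outright. The trade-off is that the paper's invariants are geometric intersection numbers, for which minimal position and the bigon criterion (Lemma~\ref{Bigon_Criterion}) are the natural, elementary tools already set up in the text, while your approach requires the additional but routine step of making the winding-number invariant precise. Your own hedging about the invariance of $\tau$ is in fact unwarranted: once one notes that all the $\alpha_{abc}$ (and each $\ell_k$) share the same pair of limit directions $(0,\pm 1)$ on $\partial D_n$, the concatenation $\gamma\ast\overline{\ell_k}$ is an honest loop in the closed punctured disk, and its winding number about $p_k$ is manifestly a homotopy invariant of $\gamma$ rel endpoints.
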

\begin{proof}
  These arcs are essential by Remark~\ref{Essential_Separates}, 
  since they separate punctures. Namely, 
  if $0\le a<b<c\le n+1$, then $\alpha_{abc}$ separates 
  $b-\frac{1}{2}$ and $b+\frac12$.

  We now show that these arcs are not homotopic to each other. 
  \begin{itemize}
    \item[Step 1.] Let $1\le i<n-1$. Let $\gamma_i$ be 
      the horizontal segment 
      between $i-\frac12$ and $i+\frac12$. Then 
      for any $\alpha\in \mc{A}$, $\alpha$ and $\gamma_i$ 
      are in minimal position, which follows from 
      Corollary~\ref{One_Intersection_Lemma}.
    \item[Step 2.] We now show that if 
      $|\alpha_{abc}\cap \gamma_i|=|\alpha_{a'b'c'}\cap \gamma_i|$ for 
      all $i$ then one of the following holds:
      \begin{enumerate}
        \item $(a,b,c)=(a',b',c')$, or
        \item $a=0$, $b=a'$, $c=b'$ and $c'=n$
      \end{enumerate}

      Note that for $1\le i\le n-1$, $|\alpha_{abc}\cap \gamma_i|=1$ 
      if and only if $i\in \{a,b,c\}$. Thus, if $0<a<b<c<n$ then 
      $\alpha_{abc}$ is uniquely determined by these 
      intersection numbers. Otherwise, it might happen that 
      only two of these intersection numbers are nonzero, 
      in which case, when condition $(1)$ does not hold, 
      then $(2)$ must. Finally, if only one of the 
      intersection numbers $|\alpha_{abc}\cap \gamma_i|$ 
      is nonzero, by the condition that 
      $a<b<c$, it follows that $a=0$, $c=n$, and 
      $b=i$, uniquely determining $\alpha_{abc}$. 
    \item[Step 3.] By Step 2, to show that 
      $\alpha_{abc}\not\sim \alpha_{a'b'c'}$, it suffices 
      to check the case where $a=0$, $b=a'$, $c=b'$ and $c'=n$. In this 
      case, there are two half-bigons and a bigon formed 
      between $\alpha_{abc}$ and $\alpha_{a'b'c'}$, 
      and each contains a puncture (see 
      Figure~\ref{fig:Example_Bigons}), and hence by 
      Lemma~\ref{Bigon_Criterion}, they are 
      not homotopic to each other.
  \end{itemize}
\end{proof}
\begin{center}
  \begin{figure}
    \includegraphics[width=0.25\linewidth]{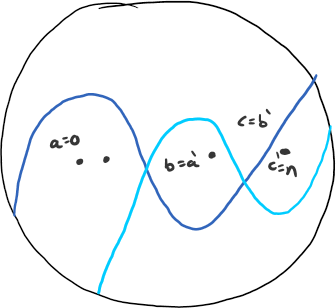}
    \caption{Showing that if $a=0,b=a',c=b'$ and $c'=n$ then 
    $\alpha_{abc}\not\sim \alpha_{a'b'c'}$.}
    \label{fig:Example_Bigons}
  \end{figure}
\end{center}

\section{Square Complexes and Theorem~\ref{Square_Cplx_Thm}}
\begin{Definition} Let $X$ be a square complex. 
  We say that two edges in a square are 
  \emph{parallel} if they are disjoint. We then define parallelism 
  between any two edges in $X$ by taking the transitive 
  closure. In other words, 
  two edges $e,e'$ in $X$ are parallel if there exists a sequence of 
  edges $e=e_1,\hdots,e_n=e'$ such that $e_i,e_{i+1}$ lie in the 
  same square and are parallel for all $0\le i<n$.
\end{Definition}
\mute{
\begin{Remark}
  Parallelism of edges is an equivalence relation.
\end{Remark}
\begin{Corollary}
  Let $\mc{A}$ be as in the proposition, and additionally, assume that 
  the arcs in $\mc{A}$ lie in general position and minimal position. Then:
  \begin{enumerate}
    \item If there is no isolated arc, then 
      $X(\mc{A})$ is a square complex homeomorphic to a disk.
    \item There is a bijective correspondence between 
      parallelism classes of edges in $X(\mc{A})$ and the arcs 
      in $\mc{A}$.
  \end{enumerate}
\end{Corollary}
\begin{proof}
  \begin{enumerate}
    \item If there are no isolated arcs, then every arc in $\mc{A}$ 
      intersects some other arc in $\mc{A}$. Let $e$ be an edge 
      in $X(\mc{A})$ connecting two regions $A,B$ separated by an arc 
      $\alpha\in\mc{A}$. Let $\beta\in\mc{A}$ be an arc intersecting 
      $\alpha$ at $x$ and forming part of the boundaries 
      of $A$ and $B$. Let $C,D$ be the other two regions 
      containing $x$. Then by construction, $A,B,C,D$ form a 
      square containing $e$.
    \item This follows from the construction of $X(\mc{A})$.
  \end{enumerate}
\end{proof}}
\smallskip
\begin{Definition} ~\label{Hyperplane_Def}
  A parallelism class of edges in $X$ is called a \emph{hyperplane}. 
  We say that an edge in $X$ is \emph{dual} to a hyperplane $h$ if it 
  is part of the parallelism class of $h$.
\end{Definition}
\begin{Remark}
  If $X$ is a planar square complex, we will also treat 
  hyperplanes in $X$ as immersed curves or arcs starting 
  and terminating in $\del X$. 
  If $h$ is a simple hyperplane, then its corresponding arc will 
  bisect every square that contains an edge in $h$.
\end{Remark}
\begin{Definition}
  The union of the cells intersecting a hyperplane $h$ 
  is called the \emph{carrier} of $h$, and will be denoted by $N(h)$. 
  Similarly, for $s\subset h$ a connected segment, we define the 
  \emph{carrier} $N(s)$ to be the union of all cells intersecting $s$.
\end{Definition}
\begin{Definition}
    Let $X$ be a square complex as in Theorem~\ref{Square_Cplx_Thm}. 
  \begin{enumerate}
    \item A square in $X$ is called a \emph{boundary square} if it 
      contains an edge $e$ which is also contained in $\del X$.
    \item We say that a hyperplane $h$ in $X$ is a 
      \emph{boundary hyperplane} if it bisects a boundary square.
    \item Assume that $X$ is not separated by a single cell. 
      A \emph{boundary (hyperplane) segment} of a hyperplane $h$ is a 
      maximal hyperplane segment of $h$ starting and ending in square 
      centers with the additional requirement that 
      every dual edge has a vertex in $\del X$. For examples, 
      see Figure~\ref{fig:Boundary_Is_Circle}.
  \end{enumerate}
\end{Definition}
\mute{\begin{Remark}
  If $X$ is not separated by squares, then every boundary hyperplane 
  corresponds to a connected component of $\del X$ under the 
  nearest-point projection to $\del X$. TODO: figure?
\end{Remark}}
\begin{Definition}
  Let $X$ be a square complex, let $S$ be a square in $X$, and let 
  $v,x\in S$ be opposite vertices. Let $u,w$ be the other vertices of $S$. 
  We can consider a new square complex, obtained from $X$ by 
  removing $S$ and identifying $vu\sim vw$ and $xu\sim xw$.
  \begin{center}
    \begin{figure}
      \includegraphics[width=0.4\linewidth]{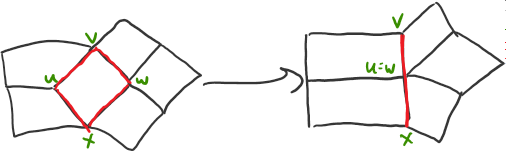}
      \caption{Folding the red square along $v$.}
      \label{fig:Folding_Example}
    \end{figure}
  \end{center}
  We say that the new complex $X'$ is obtained from $X$ by 
  \emph{folding} $S$ along $v$.
\end{Definition}
The following technical lemma will be used in the proof of Theorem~\ref{Square_Cplx_Thm}. We recommend to skip it on first reading. In the 
following, we denote by $|X|$ the number of squares in $X$. 
Recall that a square complex satisfies condition $(\ast)$ if 
its hyperplanes are simple arcs intersecting at most twice.
\begin{Lemma} \label{Reidemeister}
  Let $Y$ be a square complex with the following properties:
  \begin{enumerate}
    \item $Y$ is homeomorphic to a disk,
    \item $Y$ satisfies condition $(\ast)$,
    \item $Y$ has a hyperplane $h$ such that $N(h)$ consists of two 
      boundary squares as in Figure~\ref{fig:Boundary_Hyperplane_Eg},
      \begin{center}
        \begin{figure}
          \includegraphics[width=0.1\linewidth]{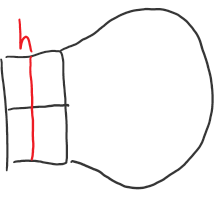}
          \caption{Describing $h$ in condition (3).}
          \label{fig:Boundary_Hyperplane_Eg}
        \end{figure}
      \end{center}
    \item $Y$ has no corner outside of $N(h)$,
    \item The hyperplanes intersecting $h$ form a bigon disjoint 
      from the boundary squares of $Y$.
  \end{enumerate}
  Then there exists a square complex $Y'$ with $|Y'|<|Y|$ such that:
  \begin{enumerate}
    \item $Y'$ is homeomorphic to a disk,
    \item $Y'$ satisfies condition $(\ast)$,
    \item $Y'$ has a hyperplane, $h'$ such that $N(h')$ consists 
      of two boundary squares as in 
      Figure~\ref{fig:Boundary_Hyperplane_Eg},
    \item $Y'$ has no corner outside of $N(h')$,
    \item The two hyperplanes intersecting $h'$ are disjoint.
  \end{enumerate}
\end{Lemma}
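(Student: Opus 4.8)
The plan is to eliminate the two intersection points of the two hyperplanes crossing $h$ by a single combinatorial Reidemeister~II move — the move dual to an isotopy that pushes one of those hyperplanes across the bigon, past the other — after first clearing away any nested bigons by an induction on $|Y|$.

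Set $h'=h$ throughout. Since $N(h)$ consists of two squares and the hyperplanes of $Y$ are simple, each of those two squares is bisected by $h$ and by exactly one further hyperplane; call these $a$ and $b$. Every hyperplane crossing $h$ does so at the center of a square lying in $N(h)$, so $a$ and $b$ are \emph{all} the hyperplanes crossing $h$, and they are distinct: if $a=b$ then by hypothesis~(5) this single hyperplane would cobound a bigon with $h$, which is impossible since $N(h)$ consists of boundary squares while hypothesis~(5) places the bigon away from every boundary square. By hypothesis~(5), $a$ and $b$ cobound a bigon, so they meet at least twice, hence by condition~$(\ast)$ exactly twice; thus the bigon $B$ between $a$ and $b$ is unique, and by hypothesis~(5) the closed bigon $\overline B$ is disjoint from every boundary square of $Y$. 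Let $N(\overline B)$ be the union of the cells meeting $\overline B$; this is a planar subcomplex which, as a subsurface, is a disk containing $\overline B$, it contains no boundary square, and in particular it is disjoint from $N(h)$. Consequently any modification of $Y$ supported inside $N(\overline B)$ leaves $h$, the subcomplex $\partial Y$, the carrier $N(h)$, and every corner of $Y$ untouched; this is exactly what will make properties (3) and (4) propagate for free.

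Now induct on $|Y|$. Say a bigon $B_0$ contained in $\overline B$ is \emph{clean} if it is the only bigon contained in $\overline{B_0}$. If $B$ is not clean, choose an innermost bigon $B_0\subsetneq B$ bounded by two of the hyperplanes among $a$, $b$, and those crossing $B$; then $B_0$ is clean. In either event, apply the Reidemeister~II move to the clean bigon at hand: it deletes exactly the two squares bisected by both of the bigon's bounding hyperplanes (the two points where they meet) and re-glues the disk $N(\overline{B_0})$ (resp.\ $N(\overline B)$) so that those two hyperplanes are pulled apart and no longer cross; the boundary circle of this disk, and hence the way it attaches to the rest of $Y$, is unchanged, so the outcome is again a disk. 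The move is supported in the interior of $Y$, it strictly decreases the number of squares by $2$, and it alters no intersection number except that of the bounding pair, which drops from $2$ to $0$; so condition~$(\ast)$ survives and no hyperplane is made non-simple. If the move was applied to $B$ itself, the resulting complex $Y'$ is a disk satisfying $(\ast)$, still has $N(h')=N(h)$ equal to two boundary squares, still has no corner outside $N(h')$, and has $a$, $b$ — the two hyperplanes crossing $h'$ — now disjoint, so we are done. If instead it was applied to a proper sub-bigon $B_0$, then the resulting $Y''$ still satisfies hypotheses~(1)--(5) — in particular $a$ and $b$ still cobound a bigon and $\overline B$ is still disjoint from the boundary squares — while $|Y''|<|Y|$, so the induction hypothesis applied to $Y''$ yields the required $Y'$.

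The hard part will be making the Reidemeister~II move precise in the clean case. One must read off the combinatorial structure of the carrier of a clean bigon — two ``cap'' squares at its ends, glued along adjacent-edge pairs onto the $2\times k$ grid of squares produced by the $k\ge 1$ hyperplanes crossing the bigon, possibly with extra squares where those hyperplanes cross one another once inside — and from it define the re-gluing unambiguously, checking that the two subcomplexes being interchanged carry the same boundary circle (a match of $2k+4$ free edges on each, in the basic case), that exactly two squares disappear, and that no pair of hyperplanes acquires a new crossing. With that in hand, the preservation of $(\ast)$, of the no-corner condition, and of the property that $N(h')$ consists of two boundary squares all follow from the fact that the entire operation takes place in the interior of $Y$, away from $N(h)$ and from every boundary square.
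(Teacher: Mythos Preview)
Your approach is essentially the paper's: pass to an innermost (``clean'' / ``minimal'') bigon, perform a Reidemeister~II--type move supported away from $N(h)$ and the boundary squares, and iterate. The paper makes the move precise as a pair of \emph{foldings} of the two crossing-squares $S_1,S_2$ along the diagonal not meeting the bigon, and then proves a Claim verifying that $(\ast)$ survives; you defer this (``the hard part'') to a sketch.

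One assertion needs care. You write that the move ``alters no intersection number except that of the bounding pair.'' Taken literally this is false: once you delete $S_1,S_2$ and re-glue, the two bounding hyperplanes $c,d$ are replaced by recombinations $c^*,d^*$ (each built from the tails of one and the bigon-side of the other), and for a third hyperplane $s$ one finds $|s\cap c^*| = |s\cap d|$ and $|s\cap d^*| = |s\cap c|$ --- a swap, not a fixation. The reason this swap still yields $|s\cap c^*|\le 2$ is precisely the cleanness hypothesis: it forces every arc of $s$ in the bigon to enter through one side and exit through the other, so that $s$ meets the $c$-side and the $d$-side of the bigon the same number of times. Without cleanness one can cook up $s$ meeting the $c$-side twice and the tails of $d$ twice, giving $|s\cap c^*|=4$. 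The paper's proof invokes exactly this (``since $\mathcal B$ is minimal, if $s$ enters $\mathcal B$ by intersecting $\alpha$, it must exit by intersecting $\beta$''); your write-up should make the same use of cleanness explicit rather than appealing to an isotopy picture. With that correction, your argument and the paper's coincide.
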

\begin{proof} 
  Let $\alpha,\beta$ be two hyperplanes in $Y$ which form a bigon $B$ 
  disjoint from the boundary squares of $Y$. We define 
  $\mc{B}$ as the planar square complex 
  composed of all cells intersecting $B$ in $Y$. The 
  square complex $\mc{B}$ will be called \emph{minimal} if there 
  are no hyperplane bigons contained in it distinct from $B$. 
  Suppose $\mc{B}$ is minimal. Let $S_1,S_2$ be the squares of $\mc{B}$ 
  in which $\alpha$ and $\beta$ intersect. Let $v_1,v_2$ be vertices 
  of $S_1,S_2$ such that neither the vertices $v_i$, nor their 
  opposite vertices in $S_i$ lie in $B$. We define a new 
  square complex, $Y^{*}$ by folding $S_1,S_2$ along $v_1,v_2$ 
  respectively, as in Figure~\ref{fig:Folding_Y}
  \begin{center}
    \begin{figure}
      \includegraphics[width=0.4\linewidth]{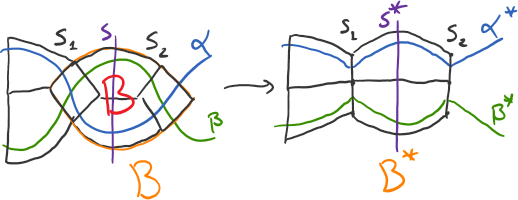}
      \caption{Folding along $S_1,S_2$ to get $Y^*$ from $Y$.}
      \label{fig:Folding_Y}
    \end{figure}
  \end{center}
  \begin{Claim} \label{Induction_Bigon}
    $Y^{*}$ is a square complex which satisfies conditions (1)-(4) 
    in the statement of Lemma~\ref{Reidemeister}.
  \end{Claim}
  \begin{proof}
    Conditions (1),(3),(4) are immediate, since the folding occurred on the 
    interior of $Y$. It remains to show condition (2). Note 
    that after folding, all hyperplane intersection numbers 
    remain the same apart from the ones for 
    $\alpha$ and $\beta$. After folding, 
    the remaining hyperplane segments of $\alpha$ and $\beta$ 
    glue together to form two new hyperplanes, $\alpha^*$ and $\beta^*$ as 
    in Figure~\ref{fig:Folding_Y}. $\alpha^{*}$ is simple, because any 
    self-intersection would have to come from a third intersection 
    between $\alpha$ and $\beta$. 
    Additionally, note that $\alpha^{*}$ and $\beta^{*}$ are disjoint. 
    We now need to show that $|\alpha^{*}\cap s^{*}|\le 2$ for 
    any hyperplane $s^{*}$ in $Y^{*}$ originating from a hyperplane 
    $s$ in $Y$ (see Figure~\ref{fig:Folding_Y}). Let 
    $\mc{B}^*$ be the square complex $\mc{B}$ after folding $S_1,S_2$. 
    If $s$ does not enter $\mc{B}$ in $Y$, then, because 
    $\alpha=\alpha^{*}$ on $Y\ssm \mc{B} = Y^{*}\ssm \mc{B}^*$, 
    it follows that $|\alpha^{*}\cap s^{*}|\le 2$. 
    Since $\mc{B}$ is minimal, it follows that if $s$ enters $\mc{B}$ by 
    intersecting $\alpha$, it must exit $\mc{B}$ by intersecing 
    $\beta$. After passing to $\alpha^{*}$ and $\beta^*$, we still get that 
    $\alpha^{*},\beta^{*}$ intsersect $s^{*}$ once in 
    $\mc{B}^{*}$, and hence, by condition (2) of $Y$, we get that 
    $Y^{*}$ satisfies condition (2). This proves the claim.
  \end{proof} 
  We return to the proof of Lemma~\ref{Reidemeister}. 
  Let $\alpha_0,\beta_0$ be the two hyperplanes intersecting $h$, 
  and let $B_0$ be the bigon they form, and $\mc{B}_0$ be the 
  respective square complex. 
  Since $Y$ is finite, there must exist a minimal bigon complex $\mc{B}$ 
  contained in $\mc{B}_0$. By the claim, we get $Y^{*}$, 
  which is a square complex with one fewer bigons contined in 
  $\mc{B}_0$. We can then continue to remove the 
  bigons in $\mc{B}_0$ until $\mc{B}_0$ is 
  minimal. When $\mc{B}_0$ is minimal, we can remove it, giving our 
  desired complex $Y'$, which will also satisfy condition (5) in addition 
  to (1)-(4).
\end{proof}
\begin{Definition}
  Let $X$ be a square complex homeomorphic to a disk, and let 
  $h$ be a simple hyperplane in $X$. Consider the square complex 
  $X'$ obtained from $X$ by removing $N(h)$, 
  and gluing along opposite edges in $\del N(h)$. 
  We say that $X'$ is obtained from $X$ by \emph{collapsing} along $h$. 
\end{Definition}
\begin{proof} [Proof of Theorem~\ref{Square_Cplx_Thm}]
  If $X$ contains a spur, then we are done. Otherwise, 
  we collapse all hyperplanes containing $1$-cells not 
  contained in $2$-cells to obtain a new complex $X'$. Obviously, 
  a corner of $X'$ corresponds to a corner of $X$, 
  and the hyperplanes of $X'$ satisfy condition $(\ast)$.

  It remains to prove the theorem under the hypothesis 
  that every $1$-cell in $X$ is contained in a $2$-cell. We 
  do this by induction on the number of squares in $X$.
  \begin{Claim}
    If $X$ contains a separating vertex, then it has a corner
  \end{Claim}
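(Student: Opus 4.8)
The plan is to cut $X$ at the separating vertex and feed the resulting pieces back into the inductive hypothesis. Let $v$ be a separating vertex and let $C_1,\dots,C_k$ ($k\ge 2$) be the connected components of $X\ssm\{v\}$; set $X_i=\overline{C_i}=C_i\cup\{v\}$. The first and most delicate step is to verify that each $X_i$ is again a finite planar square complex, homotopy equivalent to a disk, satisfying condition $(\ast)$, in which every $1$-cell lies in a $2$-cell, and with $|X_i|<|X|$. Everything here rests on one observation: a square $S$ of $X$ with the vertex $v$ removed is still connected, so $S$ lies in a single $X_i$; likewise a hyperplane of $X$, being a parallelism class of edges joined through squares, lies in a single $X_i$. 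It follows that for any edge $e\subset X_i$, every square of $X$ containing $e$ already lies in $X_i$, and hence: $X_i$ is a subcomplex; every $1$-cell of $X_i$ lies in a $2$-cell of $X_i$, so $X_i$ has no spur and (since $X$ is connected) at least one square, so it is not a single $0$-cell; the squares of $X$ are partitioned among the $X_i$, so $|X_i|<|X|$ because $k\ge 2$; and the hyperplanes of $X_i$ are exactly the hyperplanes of $X$ lying in $X_i$, so $X_i$ inherits condition $(\ast)$.

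The homotopy type of $X_i$ is then immediate: collapsing the other pieces $X_j$ ($j\ne i$) to the point $v$ defines a retraction $X\to X_i$, so $X_i$ is a retract of the contractible complex $X$ and is therefore itself contractible, i.e.\ homotopy equivalent to a disk.

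Next I apply Theorem~\ref{Square_Cplx_Thm} to one of the pieces $X_i$; this is legitimate since $|X_i|<|X|$, so it is covered by the inductive hypothesis. It produces a corner or a spur of $X_i$, and since every $1$-cell of $X_i$ lies in a $2$-cell the spur case cannot occur, so $X_i$ has a corner: a square $S$ together with two adjacent edges $e_1,e_2$ of $S$ meeting at a vertex $w$, both contained in $\partial X_i$. Finally I promote this to a corner of $X$. Since every square of $X$ containing $e_j$ lies in $X_i$, the edge $e_j$ is contained in exactly one square of $X$, namely $S$, and hence $e_j\subset\partial X$; moreover $e_1,e_2$ are still adjacent edges of the same square $S$ of $X$, so they are consecutive along $\partial X$ at $w$. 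Thus $e_1,e_2$ form a corner of $X$ at $w$, which proves the Claim.

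The main obstacle I expect is the first step: not a single hard argument, but the bookkeeping needed to confirm that $X_i$ inherits all the hypotheses of the theorem, all of which reduces to the fact that neither a square nor a hyperplane of $X$ can be split by the cut vertex $v$. The homotopy-type step and the corner-transfer step are then routine.
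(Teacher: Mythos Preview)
Your argument is correct when the corner of $\overline{X_i}$ produced by induction lies at a vertex $w\neq v$, but it breaks down when $w=v$. In that case the assertion that ``$e_1,e_2$ are consecutive along $\partial X$ at $w$'' is false: the other components $\overline{X_j}$ are also attached at $v$, so when you traverse $\partial X$ and arrive at $v$ along $e_1$, you next enter the boundary of some $\overline{X_j}$ rather than continue along $e_2$. Concretely, take $X$ to be two squares sharing only the vertex $v$ (a bowtie); each half $\overline{X_i}$ is a single square with a corner at every vertex, in particular at $v$, yet the pairs of boundary edges of $X$ that are consecutive at $v$ each consist of one edge from each square, so $X$ has no corner at $v$. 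Your inductive hypothesis only hands you \emph{some} corner of $\overline{X_i}$, and nothing in your argument rules out that it sits at $v$.

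The paper is explicit about this: ``this corner is either a corner of $X$, and we are done, or a corner at $v$,'' and it handles the second case with additional work. Observing that a corner of $X_1$ at $v$ forces $X_1$ to have a unique square $S_v$ at $v$, the paper removes $S_v$ from the smallest component $X_1$, doubles the result along the two remaining edges of $S_v$, checks condition~$(\ast)$ for the double via a short case analysis on how hyperplanes meet the doubling edges, and applies induction to this doubled complex (which has $2|X_1|-2<|X|$ squares) to locate a corner necessarily away from $v$. Your proof is missing an analogous step; everything else you wrote (subcomplex structure, inheritance of $(\ast)$, contractibility via retraction, promotion of corners at $w\neq v$) is fine.
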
 
  \begin{proof}
    Let $X_1,\hdots, X_n$ be the connected components of 
    $X\ssm \{v\}$. By induction, there is a corner in every 
    $X_i$. This corner is either a corner of $X$, and we are done, 
    or a corner at $v$. Let $X_1$ be the component with the 
    smallest number of squares, and let $S_v$ be the square 
    in $X_1$ containing $v$. We define $\hat X$ as the 
    double of $X_1\ssm S_v$ along the two remaining edges 
    $e_1,e_2$ from $S_v$. Let $\hat e_1,\hat e_2$ be the 
    edges corresponding to $e_1,e_2$ in $\hat X$.
    Note that the number of squares 
    in $\hat X$, satisfies $|\hat X|=2|X_1|-2<|X|$. 
    \begin{center}
      \begin{figure}
        \includegraphics[width=0.5\linewidth]{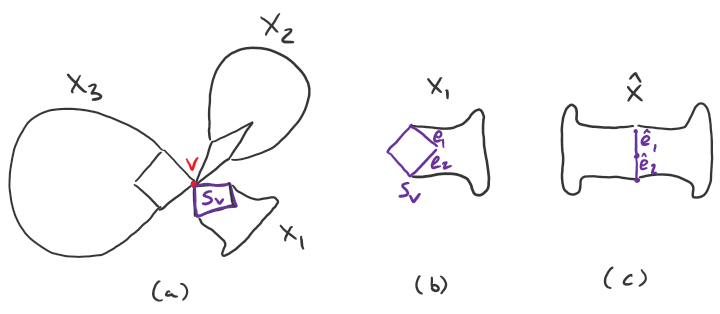}
        \caption{An example of the above construction. $(a)$ 
        illustrates the complex $X$, $(b)$ illustrates the smallest 
        component $X_1$ of $X\ssm v$, and 
        $(c)$ illustrates the constructed complex $\hat X$.}
        \label{fig:SepVertex}
      \end{figure}
    \end{center}
    \begin{Remark}
      It cannot happen that a hyperplane in $\hat X$ passes through 
      $\hat e_1$ and $\hat e_2$. This would correspond 
      to a non-simple hyperplane in $X$. In particular, it follows 
      that all hyperplanes in $\hat X$ are simple arcs.
    \end{Remark}
    We now justify 
    why the hyperplanes in $\hat X$ satisfy condition $(\ast)$. 
    Let $\hat\alpha,\hat\beta$ be hyperplanes in the 
    double $\hat X=(X\ssm S_v)\sqcup (X\ssm S_v)/\sim$.

    There are four cases we must check:
    \begin{itemize}
      \item $\hat\alpha,\hat\beta$ lie in the first copy of $(X\ssm S_v)$. 
        In this case, $\hat X$ satisfies condition $(\ast)$ because 
        $X$ satisfies it.
      \item $\hat\alpha,\hat\beta$ lie in different copies of 
        $(X\ssm S_v)$. In this case, $\hat \alpha\cap\hat\beta=\emptyset$, 
        so condition $(\ast)$ is automatically satisfied.
      \item $\hat\alpha$ lies in both copies, $\hat\beta$ lies in the 
        first. In this case, by the remark, it follows without 
        loss of generality that $\hat\alpha$ passes through $\hat e_1$ 
        and not $\hat e_2$. Thus, in the copy of $X\ssm S_v$ 
        containing $\hat \beta$, 
        $\hat \alpha$ coincides with its corresponding hyperplane, 
        $\alpha$ in $X\ssm S_v$. Since $\hat\beta$ coincides with its 
        corresponding hyperplane $\beta$ in $X\ssm S_v$, 
        by condition $(\ast)$ for $X$, we get that $\hat\alpha,\hat\beta$ 
        satisfy condition $(\ast)$.
      \item $\hat\alpha,\hat\beta$ both lie in both 
        copies of $(X\ssm S_v)$. This case can only happen if 
        $\hat\alpha,\hat\beta$ are obtained as doubles of 
        hyperplanes $\alpha,\beta$ passing through $e_1,e_2$ respectively. 
        If $\hat\alpha\neq\hat\beta$, then $\alpha,\beta$ must cross in 
        $S_v$, meaning that in $(X\ssm S_v)$, we have 
        $|\hat\alpha\cap \hat\beta|\le 2-1=1$ (see 
        Figure~\ref{fig:SepVertex2}). Thus, doubling 
        these hyperplanes gives $|\hat\alpha\cap\hat\beta|\le 1+1=2$. 
    \end{itemize}
    Thus, by induction, $\hat X$ contains a corner, 
    which gives rise to a corner in $X$. This justifies 
    that there is no separating vertex.
    \begin{center}
      \begin{figure}
        \includegraphics[width=0.3\linewidth]{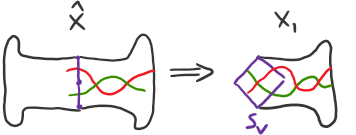}
        \caption{An example of the contradiction in the fourth bullet 
        above.}
        \label{fig:SepVertex2}
      \end{figure}
    \end{center}
  \end{proof}
  If $X$ contains a separating square $S$, then let 
  $X_1$ be the smallest component of $X\ssm S$. Let 
  $\hat X$ be the double of $X_1$ along $X_1\cap S_v$, 
  which might consist of one, two, or three edges. The 
  argument justifying condition ($\ast$) is as above, and the 
  doubling works in the same way. Hence we can assume that 
  $X$ does not contain separating squares. Note that 
  likewise, there are no separating edges, as they would 
  imply the existence of a corner or a separating square.

  We will attempt to reproduce this doubling 
  trick in the remaining case, where $X$ does not contain 
  a separating cell. We now assume by contradiction that 
  $X$ has no corners, and we will show that there exists a smaller 
  complex $Z$ satisfying the hypothesis with no corners.

  Since $X$ contains no separating cells, $X$ is homeomorphic 
  to a disk, and the union of the boundary segments is a circle 
  (see Figure~\ref{fig:Boundary_Is_Circle}). 
  \begin{center}
    \begin{figure}
      \includegraphics[width=0.2\linewidth]{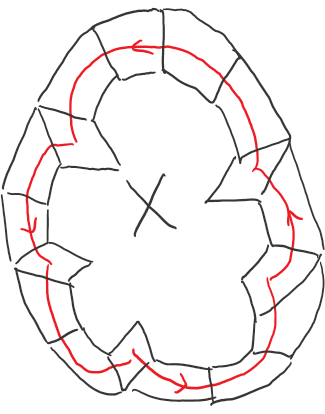}
      \caption{Since $X$ is homeomorphic to a disk with no 
      separating cells, the 
      boundary segments form a circle.}
      \label{fig:Boundary_Is_Circle}
    \end{figure}
  \end{center}
  We choose an orientation on this circle which induces 
  an orientation on each boundary segment.

  This means that every boundary segment $s\subset h$ induces 
  a decomposition of $h$ as an oriented arc $h= s_{-} s s_{+}$. Note 
  that the orientation induced on $h$ in the above way depends 
  on the boundary segment $s$ (see Figure~\ref{fig:Hyperplane_Orientation}).

  \begin{center}
    \begin{figure}
      \includegraphics[width=0.2\linewidth]{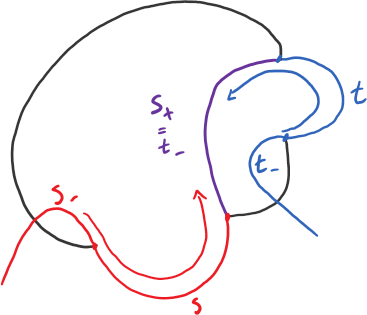}
      \caption{The orientation on each hyperplane depends on the 
      boundary segment.}
      \label{fig:Hyperplane_Orientation}
    \end{figure}
  \end{center}

  Let $\mc{S}$ denote the set of all boundary segments. For every 
  $s\in \mc{S}$, we denote $s_{-},s_{+}$ as above. 
  $X\ssm N(s_{-})$ has two (possibly disconnected) sides. 
  Let $\hat X$ be the side not containing 
  $s$. Let $X_{s_{-}}=\hat X\cup N(s_{-})$ as 
  in Figure~\ref{fig:S_Minus_Complex}. We analogously define 
  $X_{s_{+}}$. 
  \begin{center}
    \begin{figure}
      \includegraphics[width=0.15\linewidth]{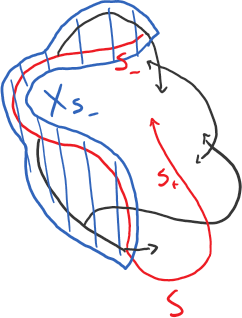}
      \caption{The complex $X_{s_{-}}$.}
      \label{fig:S_Minus_Complex}
    \end{figure}
  \end{center}
  Let $M=\max_{s\in\mc{S}} \max\{|X_{s_{-}}|,|X_{s_{+}}|\}$. Without 
  loss of generality, assume that $M = |X_{s_{-}}|$ for 
  some $s\in \mc{S}$. Set $X'=X_{s_{+}}$.
  \begin{Remark} 
    By choice of $s$, $|X'|\le \frac{|X|}{2}$.
  \end{Remark}

  \begin{Claim}
    If $t$ is a boundary segment in $X'$, then it cannot 
    happen that both $t_{-}$ and $t_{+}$ intersect $s_{+}$.
  \end{Claim}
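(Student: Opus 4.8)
The plan is to argue by contradiction. Suppose $t\subset g$ is a boundary segment of $X'$ with its induced decomposition $g=t_-\,t\,t_+$, and suppose that both $t_-$ and $t_+$ meet $s_+$; let $h$ be the hyperplane of $X$ containing $s_+$. Then $t_-\cap s_+$ and $t_+\cap s_+$ are nonempty and disjoint, so $g$ meets $h$ in at least two points, hence — since $X$ satisfies condition $(\ast)$ — in exactly two points $x\in t_-\cap s_+$ and $y\in t_+\cap s_+$, with $g\cap h\subset s_+$. Let $\sigma$ be the subarc of $g$ running from $x$ through $t$ to $y$, and $\tau$ the subarc of $s_+$ from $x$ to $y$. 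Since $g$ and $s_+$ are simple and meet only at $\{x,y\}$, the arcs $\sigma$ and $\tau$ cobound a bigon $B$ inside $X'$, and $t\subset\sigma\subset\partial B$.

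Next I would pin down where $B$ sits relative to the boundary circle of $X'$. The construction $X'=X_{s_+}$ cuts $X$ along the side of the carrier $N(s_+)$ facing $s$; after the cut every edge of $N(s_+)$ crossed by $s_+$ acquires a vertex on $\partial X'$, so $s_+$ is a boundary segment of $X'$ and $\tau\subset s_+$ runs alongside $\partial X'$, with $\partial X'$ on the side of $\tau$ opposite to $B$. The same holds for $t$: being a boundary segment, it runs alongside $\partial X'$ with $\partial X'$ on the side opposite $B$. Hence the boundary circle $\partial X'$ accompanies the whole of $\partial B=\sigma\cup\tau$ from the outside, which by planarity forces $B$ to be innermost, with all of $X'$ lying in the collar between $\partial B$ and $\partial X'$.

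Finally I would convert this configuration into a contradiction. One route: pass to the disk $B'$ obtained from $B$ by adjoining the collar of boundary squares of $X'$ meeting $\partial B$; its hyperplanes are subarcs of hyperplanes of $X$, hence arcs pairwise meeting at most twice, and $|B'|<|X|$ since $|X'|\le|X|/2$, so by the inductive hypothesis of Theorem~\ref{Square_Cplx_Thm} it has a corner or a spur. One then has to see this is inherited by $X$: away from the two turning points $x,y$ of $\partial B$ (which lie in squares crossed by $h$) a corner or spur of $B'$ is one of $X$, contradicting the standing reductions that $X$ has no spur and no corner; one located at $x$ or $y$ contradicts $(\ast)$, as it would force a third point of $g\cap h$; and the absence of separating cells in $X$ is what guarantees that $\partial X'$ and the collar are embedded so this analysis is valid. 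Alternatively one can avoid the induction entirely and argue with orientations: $g=t_-\,t\,t_+$ makes $x$ precede $y$ along $g$, the orientation induced on $h$ by its boundary segment $s_+$ in $X'$ fixes the cyclic order of $x$, $y$ and the two endpoints of $h$ on $\partial X$, and a planarity check on $B$ shows these orders are incompatible. The hard part — the step deserving the most care — is exactly this last conversion: controlling how the boundary circle of $X'$ winds around the bigon $B$ and certifying that the resulting corner, spur, or order clash is not merely an artifact of the cut, which is where planarity, the no-separating-cell hypothesis, and the assumption that $X$ has no corner are all needed.
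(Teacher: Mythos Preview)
Your approach overlooks the key leverage built into the setup: $s$ was chosen so that $M=|X_{s_-}|$ is \emph{maximal} among all $|X_{s'_\pm}|$. The paper's proof is three lines using exactly this. If $t_-$ and $t_+$ both meet $s_+$, then by condition~$(\ast)$ the hyperplane through $t$ meets $h$ exactly twice and both times in $s_+$, hence neither $t_-$ nor $t_+$ meets $N(s_-)$. From $t_-\cap N(s_-)=\emptyset$ one reads off $X_{s_-}\subset X_{t_-}$, and the containment is strict since $N(t_-)\subset X_{t_-}\ssm N(s_-)$. This contradicts the maximality of $|X_{s_-}|$. You never invoke this maximality, which was the entire purpose of the choice of $s$.

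Your bigon-and-collar route also has a real gap. The assertion that ``$\partial X'$ accompanies the whole of $\partial B=\sigma\cup\tau$ from the outside'' is unjustified: only the sub-segment $t\subset\sigma$ and the arc $\tau\subset s_+$ run alongside $\partial X'$; the remaining portions of $\sigma$ --- the pieces of $t_-$ and $t_+$ between $x,y$ and the endpoints of $t$ --- plunge into the interior of $X'$. So the conclusion that ``all of $X'$ lies in the collar between $\partial B$ and $\partial X'$'' does not follow, and is in general false: the bigon $B$ may be small with most of $X'$ lying outside it and not in any collar. The subsequent plan (build $B'$, apply the inductive hypothesis, transfer the corner back to $X$) therefore rests on an unestablished picture, and you yourself flag the final conversion as the hard part without carrying it out; the alternative orientation argument is only a sketch. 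Use the maximality of $M$ instead --- that is what it is there for.
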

  \begin{proof}
    If $t_{+}$ and $t_{-}$ both intersect $s_{+}$, then by condition 
    $(\ast)$, it follows that neither of them may intersect $N(s_{-})$. 
    In particular, $t_{-}\cap N(s_{-})=\emptyset$, so 
    $X_{s_{-}}\subset X_{t_{-}}$ as in 
    Figure~\ref{fig:Max_S_Complex}. This containment is 
    strict, since $N(t_{-})\subset X_{t_{-}}\ssm N(s_{-})$. 
    This contradicts maximality of $|X_{s_{-}}|$.
    \begin{center}
      \begin{figure}
        \includegraphics[width=0.2\linewidth]{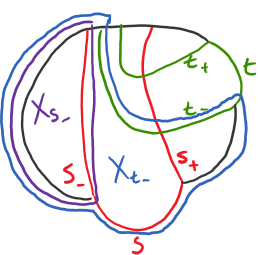}
        \caption{If $s$ is chosen such that $X_{s_{-}}$ is maximal, 
        then it cannot happen that both $t_{-}$ and $t_{+}$ intersect 
        $s_{+}$.}
        \label{fig:Max_S_Complex}
      \end{figure}
    \end{center}
  \end{proof}
  By the claim, we can label each boundary 
  segment $t\neq s_{+}$ in $X'$ by $+$, $-$, or $0$ depending on 
  whether $t_{+}$, $t_{-}$, or neither intersect $s_{+}$. 
  We order the boundary segments of $X'$ different from $s_{+}$ using 
  the orientation induced from $\del X$.
  Note that the first and last boundary segments in this order are 
  labelled by $-$ and $+$ respectively. Let $t^1$ be the last 
  boundary segment labelled by $-$. Then we take 
  the consecutive boundary segments $t^2,t^3,\hdots,t^{k-1}$ 
  as long as they are labelled by $0$. This gives a 
  a sequence of consecutive segments, 
  $t^1,\hdots,t^k$ labelled $-,0,\cdots,0,+$ (see 
  Figure~\ref{fig:Segment_Labelling}).

  \begin{center}
    \begin{figure}
      \includegraphics[width=0.3\linewidth]{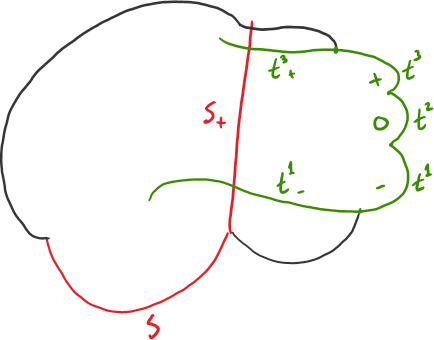}
      \caption{Finding a sequence of consecutive segments labelled 
      $-,0,\cdots,0,+$.}
      \label{fig:Segment_Labelling}
    \end{figure}
  \end{center}
  We now examine $t^1_{-}$ and $t^k_{+}$ and treat 
  three cases, each time constructing $Z$ differently. 

  \begin{enumerate}[label=(\alph*)]
    \item If $t^1_{-}$ and $t^k_{+}$ intersect 
      each other before intersecting $s_{+}$, then we can look at 
      the subcomplex $Y$ of $X'$ bounded by $N(t^1_{-})\cup N(t^k_{+})
      \bigcup_{1\le i\le k} N(t^i)$ as in Figure~\ref{fig:Int_Before_s}. 
      \begin{center}
        \begin{figure}
          \includegraphics[width=0.3\linewidth]{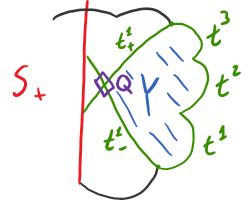}
          \caption{Defining the complex $Y$, as the union of 
          all the squares intersecting the 
          region $Y$ in the figure.}
          \label{fig:Int_Before_s}
        \end{figure}
      \end{center}
      Consider the square 
      $Q=N(t^1_{-})\cap N(t^k_{+})$, and let $\hat Y=Y\ssm Q$. 
      let $Z$ be the double of $\hat Y$ along the remaining 
      edges of $Q$. We also know that $|Z|\le 2|Y|-2\le 2|X'|-2<|X|$.
    \item If $t^1_{-}$ and $t^k_{+}$ 
      do not intersect each other before intersecting $s_{+}$, 
      and $|t^1_{+}\cap t^k_{-}|\le1$, as in 
      Figure~\ref{fig:No_Bigon_Germs}, 
      then we define $Y$ as the complex bounded by 
      $N(s_{+})\cup N(t^1_{-})\cup N(t^k_{+})\bigcup_{1\le i\le k} N(t^i)$. 
      \begin{center}
        \begin{figure}
          \includegraphics[width=0.2\linewidth]{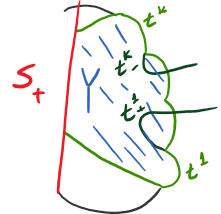}
          \caption{The second case, where the segments $t^1_{+}$ and 
          $t^k_{-}$ are disjoint or intersect once.}
          \label{fig:No_Bigon_Germs}
        \end{figure}
      \end{center}
      We keep the notation $s_{+}$ for the 
      segment of $s_{+}$ between the intersections $s_{+}\cap t^1_{-}$ 
      and $s_{+}\cap t^k_{+}$. 

      Let $\hat Y$ be the complex obtained from $Y$ by 
      collapsing all non-boundary hyperplanes. This operation does 
      not produce corners and keeps $\hat Y$ homeomorphic to a disk 
      (see Figure~\ref{fig:Removing_Nonboundary}).
      \begin{center}
        \begin{figure}
          \includegraphics[width=0.5\linewidth]{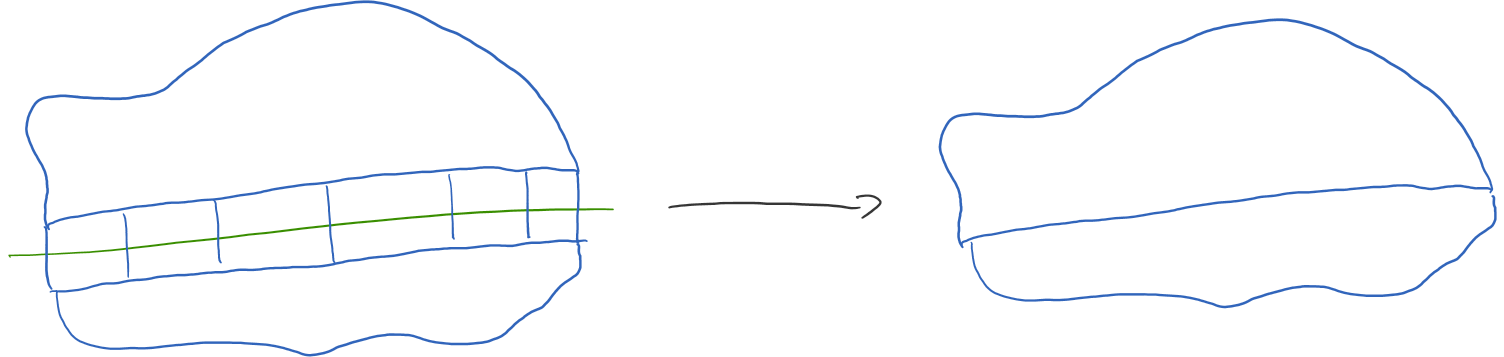}
          \caption{The construction of $\hat Y$ by removing nonboundary 
          hyperplanes does not change the homeomorphism type of 
          the complex.}
          \label{fig:Removing_Nonboundary}
        \end{figure}
      \end{center}
      The boundary hyperplanes of $Y$ are the same as those for 
      $\hat Y$, so if $s_{+},h^1,\hdots,h^k$ 
      were the boundary hyperplanes of $Y$, we denote the boundary 
      hyperplanes of $\hat Y$ by $\hat s_{+},\hat h^1,\hdots, \hat h^k$ 

      Doubling $\hat Y$ along $\hat s_{+}$, and then collapsing 
      $N(\hat s_{+})$ gives us $Z$. As in the previous case, 
      $|Z|=2|\hat Y|-4\le 2|Y|-4<|X|$.
    \item If $t^1_{-}$ and $t^k_{+}$ 
      do not intersect each other before intersecting $s_{+}$, 
      and $t^1_{+}$, $t^k_{-}$ form a bigon, as in 
      Figure~\ref{fig:Bigon_Germs}, then we define $Y$, 
      $\hat Y$ exactly as above.
      \begin{center}
        \begin{figure}
          \includegraphics[width=0.2\linewidth]{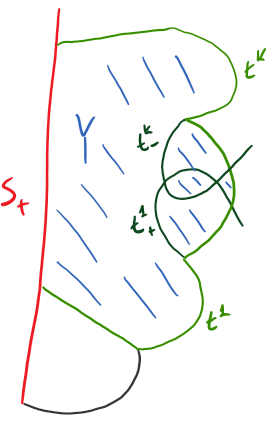}
          \caption{The third case, where the segments $t^1_{+}$ and 
          $t^k_{-}$ form a bigon.}
          \label{fig:Bigon_Germs}
        \end{figure}
      \end{center}

      We verify that $\hat Y$ satisfies the conditions of 
      Lemma~\ref{Reidemeister}.
      \begin{enumerate}[label=(\arabic*)]
        \item Follows from Figure~\ref{fig:Removing_Nonboundary},
        \item Immediate,
        \item $\hat s_{+}$ is this hyperplane, 
          with only $\hat h_1,\hat h_k$ intersecting it,
        \item Since $X$ had no corner, $Y$ has exactly two corners 
          which lie in $N(s_{+})$. Since collapsing nonboundary 
          hyperplanes does not introduce new corners, the same holds 
          for $\hat Y$.
        \item The bigon formed by $h^1,h^k$ must not intersect the 
          boundary squares of $X'$, 
          or else we would get $t^j$ labelled $+$ or 
          $-$ for $1<j<k$. This property then extends to $Y$ and 
          $\hat Y$.
      \end{enumerate}
      By Lemma~\ref{Reidemeister}, we can replace $\hat Y$ with a 
      complex $Y'$ with no corners outside of $N(\hat s_{+})$, 
      and reduce to the second case, defining $Z$ as above.
  \end{enumerate}
  In all three cases, we have $|Z|<|X|$, so by the induction hypothesis, 
  $Z$ has a corner at a vertex $v$. Note that $v$ cannot belong 
  to the edges along which we doubled. Consequently, it must belong 
  to $Y\ssm Q$ in the first case or $\hat Y\ssm N(\hat s_{+})$ in the 
  second case. This is a contradiction. 
\end{proof}

\section{Strips, bigons, and Proposition~\ref{Bigon_Count}}
In this section, we will prove Proposition~\ref{Bigon_Count}. Throughout 
this section, we assume that $\mc{A}$ is a family of arcs as in 
Proposition~\ref{Bigon_Count}. As a warm-up, we show:
\begin{Lemma} \label{Non_Isolating_Essential}
  $\alpha_p$ is the unique arc becoming 
  nonessential after removing $p$.
\end{Lemma}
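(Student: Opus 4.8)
The plan is to combine the characterization of essential arcs in Remark~\ref{Essential_Separates} with the defining property of an isolated arc. Write $D_n \ssm \alpha_p = \Delta \sqcup D'$, where $\Delta$ is the component containing the single puncture $p$ and $D'$ contains the remaining $n-1$ punctures. First I would observe that an arc $\beta$ with endpoints on $\del D_n$ becomes nonessential after removing $p$ exactly when one of the two components of $D_n \ssm \beta$ is a disk whose only marked point is $p$: by Remark~\ref{Essential_Separates}, $\beta$ is essential in $D_n$ but not in $D_n \ssm \{p\}$ iff it separates some punctures of $D_n$ but none of $D_n \ssm \{p\}$, which says precisely that one side of $\beta$ carries only the puncture $p$. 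Since $\alpha_p$ itself has this property, the lemma reduces to showing that no other arc of $\mc{A}$ does.

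Next I would take an arbitrary $\beta \in \mc{A}$ with $\beta \neq \alpha_p$ satisfying this property and derive a contradiction. Because $\alpha_p$ is isolated it is disjoint from $\beta$, so $\beta$ lies entirely in $\Delta$ or entirely in $D'$. In both cases I would identify, inside $D_n$, the region lying ``between'' $\alpha_p$ and $\beta$ and argue that it is an unmarked disk. If $\beta \subset \Delta$, then since $n-1 \geq 1$ the component of $D_n \ssm \beta$ not containing $p$ contains all of $D'$, hence the $p$-side $E$ of $\beta$ satisfies $E \subset \Delta$, and $\Delta \ssm \bar E$ is bounded by $\alpha_p$, $\beta$, and arcs of $\del D_n$ and contains no marked point. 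If $\beta \subset D'$, then $\Delta$ lies on the $p$-side of $\beta$, and the component $U$ of $D' \ssm \beta$ whose closure meets $\alpha_p$ must be free of marked points, so it is again an unmarked disk bounded by $\alpha_p$, $\beta$, and arcs of $\del D_n$.

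Finally, in either case, sliding $\alpha_p$ across this unmarked disk produces a homotopy from $\alpha_p$ to $\beta$ relative to $\del D_n$, contradicting the fact that the arcs of a good family lie in pairwise distinct homotopy classes. Hence $\alpha_p$ is the unique arc of $\mc{A}$ that becomes nonessential after removing $p$.

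I expect the main obstacle to be the planar bookkeeping in the second paragraph: one must check carefully that when $\beta$ and $\alpha_p$ are disjoint and each cuts off a disk whose only marked point is $p$, the region of $D_n$ between them genuinely has the structure of a rectangle (or a bigon/triangle in degenerate endpoint configurations) containing no marked points, so that the homotopy $\alpha_p \sim \beta$ really follows. Everything else is a direct application of Remark~\ref{Essential_Separates} and the definition of a good family.
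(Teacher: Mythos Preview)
Your proof is correct and uses essentially the same idea as the paper: both arguments hinge on the disjointness of $\beta$ and $\alpha_p$, the region (``strip'') trapped between them, and the fact that in a good family $\beta\not\sim\alpha_p$ forces that strip to contain a puncture distinct from $p$. The paper phrases this directly---it locates a puncture $r$ in the strip (since $\beta\not\sim\alpha_p$) and a puncture $q$ on the far side of $\beta$ (since $\beta$ is essential), concluding that $\beta$ still separates $q$ from $r$ after deleting $p$---whereas you run the contrapositive, assuming $\beta$ becomes nonessential and deducing that the strip is unpunctured, hence $\beta\sim\alpha_p$. Your explicit case split $\beta\subset\Delta$ versus $\beta\subset D'$ is not really needed (the paper's single ``strip'' argument covers both), and the planar bookkeeping you flag as the main obstacle is not a genuine issue: two disjoint simple arcs with endpoints on $\del D_n$ always bound a rectangular region, and if it carries no marked points the homotopy rel $\del D_n$ is immediate.
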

\begin{proof}
  Let $\alpha\in \mc{A}$ be distinct from $\alpha_p$.
  Since $\alpha$ and $\alpha_p$ are disjoint, they form a strip, 
  where $p$ is outside of this strip. Note that since $\alpha\neq\alpha_p$, 
  there must be some puncture $r$ in this strip. 
  Since $\alpha$ is essential, there exists a puncture 
  $q$ on the other side of this strip, as 
  in Figure~\ref{fig:No_p_Essential}.
  \begin{center}
    \begin{figure}[ht]
      \includegraphics[width=0.25\linewidth]{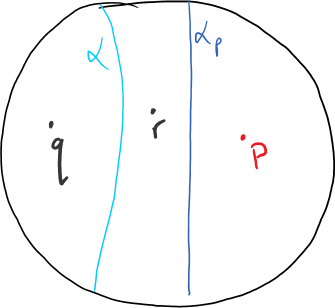}
      \caption{$\alpha$ remains essential after removing $p$.}
      \label{fig:No_p_Essential}
    \end{figure}
  \end{center}

  Now, $\alpha$ separates $q$ and $r$, and thus remains essential after 
  removing $p$.
\end{proof}
\begin{Lemma} \label{Min_Pos_Bigon}
  If two arcs $\alpha,\beta\in \mathcal{A}$ 
  become homotopic and stay essential after 
  removing $p$, then before removing $p$ they 
  must have been in one of the two configurations in Figure~\ref{fig:Punctured_Bigon}.
  \begin{center}
    \begin{figure}
      \includegraphics[width=0.4\linewidth]{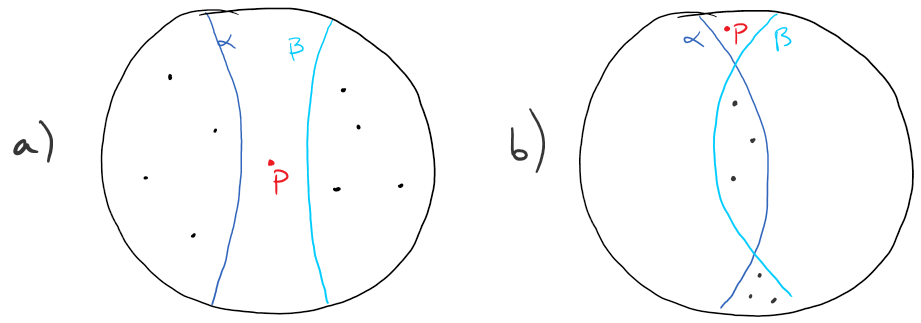}
      \caption{Two possible configurations 
      for arcs becoming homotopic after removing $p$.}
      \label{fig:Punctured_Bigon}
    \end{figure}
  \end{center}
\end{Lemma}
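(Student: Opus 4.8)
The strategy is to show that the hypotheses force $\alpha$ and $\beta$ to be \emph{disjoint} before removing $p$, and moreover that the strip they cobound contains exactly the puncture $p$; reading off how $\alpha$, $\beta$, and the isolated arc $\alpha_p$ then sit inside that strip will produce exactly the two pictures of Figure~\ref{fig:Punctured_Bigon}. So the real work is the reduction to the disjoint case.

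First a clean-up. By Lemma~\ref{Non_Isolating_Essential} the only arc of $\mc{A}$ that becomes inessential after removing $p$ is $\alpha_p$ itself; since $\alpha$ and $\beta$ remain essential, neither equals $\alpha_p$, and hence, $\alpha_p$ being isolated, both $\alpha$ and $\beta$ are disjoint from $\alpha_p$. Now put $\alpha,\beta$ in minimal position (they already are, being members of a good family) and call a complementary region of $D_n\ssm(\alpha\cup\beta)$ a region \emph{between} $\alpha$ and $\beta$ if its closure meets both $\alpha$ and $\beta$. Regarding the punctures as marked points, the Bigon Criterion (Lemma~\ref{Bigon_Criterion}) says there is no empty bigon or half-bigon between $\alpha$ and $\beta$, and condition (2) of a good family rules out an empty strip (it would make $\alpha\sim\beta$ in $D_n$); so every region between $\alpha$ and $\beta$ carries at least one puncture. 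Conversely, a homotopy rel $\del D_n$ carrying $\alpha$ onto $\beta$ inside $D_{n-1}=D_n\ssm\{p\}$ sweeps precisely across the regions between $\alpha$ and $\beta$ (rigorously: since $\alpha\sim\beta$ and the arcs are transverse but not equal, the Bigon Criterion provides a bigon or half-bigon between them in $D_{n-1}$, which we remove, and iterate down to the disjoint case), so each such region must be puncture-free in $D_{n-1}$. Combining the two observations, each region between $\alpha$ and $\beta$ contains exactly one puncture, and it must be $p$.

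Since there is only one puncture $p$, there can be at most \emph{one} region between $\alpha$ and $\beta$, and I claim this forces $\alpha\cap\beta=\varnothing$. If not, then $|\alpha\cap\beta|\in\{1,2\}$ and a direct inspection of the complementary regions exhibits at least two distinct regions between $\alpha$ and $\beta$: for a single transverse intersection the four endpoints on $\del D_n$ are cyclically interleaved (the two endpoints of $\beta$ lie on opposite sides of $\alpha$), the four quadrant regions at the crossing are all half-bigon–shaped, and resolving the crossing by the Bigon Criterion shows $\alpha\sim\beta$ in $D_{n-1}$ only if two \emph{opposite} quadrants are puncture-free in $D_{n-1}$; for two intersections there is a necessarily non-empty bigon between the middle sub-arcs, and resolving it leaves a pair of disjoint arcs whose strip is non-trivial unless two further half-bigon regions are also puncture-free. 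Either way the homotopy requires two distinct regions between $\alpha$ and $\beta$ to become puncture-free after filling $p$, which is impossible since only one of them can contain $p$. This case-check — equivalently, the assertion that two arcs crossing at most twice that become homotopic upon filling a single puncture must in fact already be disjoint — is the main obstacle, and the only step needing genuine case-work.

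Hence $\alpha$ and $\beta$ are disjoint and cobound a single strip $S$, and by the previous paragraph $S$ contains exactly the puncture $p$. Because $\alpha_p$ is disjoint from both $\alpha$ and $\beta$ and cuts off the once-punctured disk containing $p$, it lies inside $S$ and separates $S$ into a once-punctured disk $A\ni p$ and a puncture-free complementary piece, with $A$ abutting one of the arcs of $\del D_n$ on $\del S$ (and, in the degenerate subcase, $\alpha$ and $\beta$ sharing an endpoint of that arc). The two possibilities for how $A$ sits against $\del S$ are exactly the two configurations of Figure~\ref{fig:Punctured_Bigon}, which completes the proof.
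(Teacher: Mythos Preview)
Your argument contains a genuine gap: the conclusion that $\alpha\cap\beta=\varnothing$ is false, and in fact configuration~(b) of Figure~\ref{fig:Punctured_Bigon} is precisely the case $|\alpha\cap\beta|=2$.

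The error lies in the sentence ``a homotopy rel $\del D_n$ carrying $\alpha$ onto $\beta$ inside $D_{n-1}$ sweeps precisely across the regions between $\alpha$ and $\beta$, so each such region must be puncture-free in $D_{n-1}$.'' This is not correct. Consider the two-intersection picture: $\alpha$ and $\beta$ cut $D_n$ into a bigon $B$, two half-bigons $H_1,H_2$, and two outer regions $O_1,O_2$ (each meeting one of the arcs in a disconnected set). Suppose $O_1,O_2$ are puncture-free, $p$ is the sole puncture in $H_1$, and $B,H_2$ carry further punctures. After removing $p$, the Bigon Criterion lets you resolve the empty half-bigon $H_1$; but the \emph{next} empty half-bigon produced is carved out of the outer region $O_i$, not out of $B$ or $H_2$. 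Iterating brings $\alpha$ onto $\beta$ without ever sweeping across $B$ or $H_2$. Thus $\alpha\sim\beta$ in $D_{n-1}$ even though two of the original ``regions between'' retain punctures. Your iterated bigon-removal justification tacitly identifies the regions removed in $D_{n-1}$ with the original bigon/half-bigon regions in $D_n$, and that identification fails after the first step.

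As a consequence, your final paragraph also misidentifies the two configurations: the dichotomy in Figure~\ref{fig:Punctured_Bigon} is not about how $\alpha_p$ sits in a strip, but about whether $|\alpha\cap\beta|=0$ (configuration~(a), a strip containing only $p$) or $|\alpha\cap\beta|=2$ (configuration~(b), the picture just described). The paper handles this by a direct three-case analysis on $|\alpha\cap\beta|\in\{0,1,2\}$; the case $|\alpha\cap\beta|=1$ is ruled out (as you correctly argue), but the case $|\alpha\cap\beta|=2$ survives and must be analyzed further: one shows the outer regions are empty and that $p$ lies alone in a half-bigon rather than in the bigon.
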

\begin{proof}
  We prove this by considering cases. 
  \begin{enumerate}
    \item Firstly, if $\alpha$ and $\beta$ 
      are disjoint, then they cut the disk $D_n$ into three connected 
      components, one of which is a strip bounded by $\del D_n$, $\alpha$ 
      and $\beta$. If $\alpha$ and $\beta$ were not homotopic to each 
      other before removing the puncture, then the puncture must have 
      lied in this strip. 
      Since $\alpha$ and $\beta$ are homotopic after removing 
      $p$, it follows that $p$ is the only puncture 
      in this strip. This gives Figure~\ref{fig:Punctured_Bigon}a.
    \item Secondly, we consider the case where $|\alpha\cap\beta|=1$. In 
      this case, $\alpha\cup\beta$ splits $D_n$ into four quadrants. 
      Without loss of generality, let $p$ lie in the bottom right 
      quadrant (\Romannum{4}) as in Figure~\ref{fig:Cross}:
      \begin{center}
        \begin{figure}
          \includegraphics[width=0.3\linewidth]{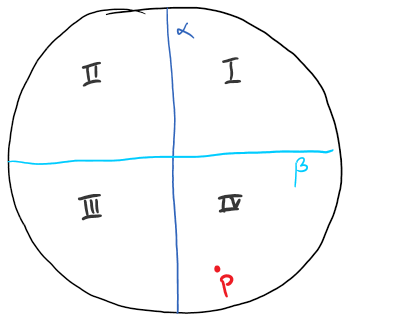}
          \caption{$\alpha$ and $\beta$ split $D_n$ into four 
          quadrants.}
          \label{fig:Cross}
        \end{figure}
      \end{center}
      By Lemma~\ref{Bigon_Criterion}, every quadrant must have a 
      puncture. However, if quadrant \Romannum{2} has a puncture $q$, 
      then quadrant \Romannum{3} cannot have a puncture $r$, since 
      $\beta$ would separate $r$ from $q$, 
      whereas $\alpha$ would not. This would then contradict 
      the fact that $\alpha$ and $\beta$ become homotopic after 
      removing $p$. 
    \item The last scenario is when $|\alpha\cap\beta|=2$. In this 
      case, $\alpha$ and $\beta$ lie as in 
      Figure~\ref{fig:Two_Intersect_ab}.
      \begin{center}
        \begin{figure}
          \includegraphics[width=.25\linewidth]{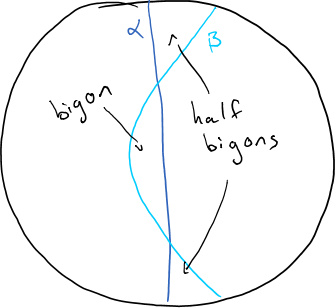}
          \caption{Simple arcs intersecting twice lie in this 
          configuration.}
          \label{fig:Two_Intersect_ab}
        \end{figure}
      \end{center}
      By Lemma~\ref{Bigon_Criterion}, there must be a puncture 
      in the bigon and the two half-bigons in 
      Figure~\ref{fig:Two_Intersect_ab}. 
      Let $r\neq p$ be a puncture in one of the half-bigons. 

      If there was a puncture $q$ outside the bigons and half-bigons
      (without loss of generality, let it lie in the right region of 
      Figure~\ref{fig:Two_Intersect_ab}), then $\beta$ 
      separates $r$ and $q$, while $\alpha$ does not, and this 
      property remains after $p$ is removed. Thus, $\alpha$ 
      and $\beta$ do not become homotopic to each other. 

      This means that the punctures in the bigon and 
      half-bigons are the only possible punctures in 
      this configuration. If $p$ lies in a bigon or half-bigon, 
      then it must be the only puncture in this bigon or 
      half-bigon, otherwise, after its removal, $\alpha$ 
      and $\beta$ would still be in minimal position, 
      and non-homotopic. If $p$ lies in the bigon, 
      then removing $p$ would make $\beta$ nullhomotopic 
      by the above discussion, giving a contradiction.

      Thus, $p$ lies in one of the half-bigons, it 
      is the only puncture in this half-bigon, and 
      all other punctures are contained in the bigon or 
      other half-bigon between $\alpha$ and $\beta$, 
      giving the configuration in Figure~\ref{fig:Punctured_Bigon}a.
  \end{enumerate}
\end{proof}
Lemma~\ref{Min_Pos_Bigon} yields the following immediate consequence:
\begin{Corollary} \label{Three_Disjoint_Arcs}
  Let $q\neq p$ be a puncture, and let $\delta\neq \delta'\in \mc{A}$ be 
  arcs becoming homotopic after removing $p$. Then 
  $|\delta\cap \delta'| = 0$ if and only if exactly one 
  of $\delta$,  $\delta'$ separate $p$ from $q$. In particular, 
  there are no triples of disjoint arcs becoming pairwise homotopic 
  after removing $p$
\end{Corollary}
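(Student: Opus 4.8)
The plan is to use Lemma~\ref{Min_Pos_Bigon} to reduce to two explicit pictures of $\delta$ and $\delta'$ before removing $p$, to check the stated equivalence in each, and then to deduce the final assertion by a parity argument. First I would observe that both $\delta$ and $\delta'$ remain essential after removing $p$: if, say, $\delta$ became nonessential, then Lemma~\ref{Non_Isolating_Essential} forces $\delta=\alpha_p$, and since $\delta'$ becomes homotopic to $\delta$ it would become nonessential too, whence $\delta'=\alpha_p=\delta$, contradicting $\delta\neq\delta'$. Thus Lemma~\ref{Min_Pos_Bigon} applies, so $\delta,\delta'$ sit in one of the two configurations of Figure~\ref{fig:Punctured_Bigon}: either (a) $\delta$ and $\delta'$ are disjoint and cobound a strip whose only puncture is $p$; or (b) $|\delta\cap\delta'|=2$, so that $D_n\ssm(\delta\cup\delta')$ consists of a bigon $B$, two half-bigons $H_1\ni p$ and $H_2$, and two further regions, with $p$ the only puncture of $H_1$ and every other puncture lying in $B\cup H_2$.

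Consider configuration (a). Here $|\delta\cap\delta'|=0$, so the left-hand side of the equivalence holds, and I must exhibit exactly one separating arc. The two disjoint arcs cut $D_n$ into the strip $T$ (whose only puncture is $p$), a region $R_\delta$ on the far side of $\delta$, and a region $R_{\delta'}$ on the far side of $\delta'$. Since $q\neq p$, it lies in $R_\delta\cup R_{\delta'}$; say $q\in R_\delta$, the other case being symmetric. Then $\delta$ separates $T$ from $R_\delta$, hence $p$ from $q$, while $p$ and $q$ both lie in the component $T\cup R_\delta$ of $D_n\ssm\delta'$, so $\delta'$ does not separate them. Hence exactly one of $\delta,\delta'$ separates $p$ from $q$.

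Now consider configuration (b), where $|\delta\cap\delta'|=2\neq0$, so the left-hand side fails and I must show it is not the case that exactly one of $\delta,\delta'$ separates $p$ from $q$. Since $p$ is the only puncture of $H_1$, the puncture $q$ lies in $B$ or in $H_2$. Inspecting the picture of two simple arcs meeting twice (Figure~\ref{fig:Two_Intersect_ab}), each of $D_n\ssm\delta$ and $D_n\ssm\delta'$ has two components, and in both cases $H_1$ and $H_2$ lie in one component while $B$ lies in the other. Consequently $\delta$ separates $p$ from $q$ precisely when $q\in B$, and the same is true of $\delta'$; in particular $\delta$ separates $p$ from $q$ if and only if $\delta'$ does, so it is never exactly one of them. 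Together with case (a), this proves the equivalence.

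For the last assertion, note that since $\alpha_p$ is essential the component of $D_n\ssm\alpha_p$ not containing $p$ contains a puncture $q\neq p$. Suppose, for contradiction, that $\delta_1,\delta_2,\delta_3\in\mc{A}$ are pairwise distinct, pairwise disjoint, and become pairwise homotopic after removing $p$. Applying the equivalence to each pair $\delta_i,\delta_j$ with this fixed $q$ shows that exactly one of $\delta_i,\delta_j$ separates $p$ from $q$ for every $i\neq j$; equivalently, the map sending $\delta_i$ to $1$ or $0$ according to whether $\delta_i$ separates $p$ from $q$ takes different values on the two members of each pair, which is impossible for three elements of a two-element set. I expect case (b) to be the only delicate point: one must verify carefully, from the planar picture of two simple arcs meeting twice, that a puncture $q\neq p$ is separated from $p$ by $\delta$ exactly when it lies in the bigon $B$, and likewise for $\delta'$; the remaining steps are routine bookkeeping with the Bigon Criterion and Lemma~\ref{Min_Pos_Bigon}.
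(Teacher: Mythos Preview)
Your proposal is correct and follows exactly the route the paper intends: the paper states the Corollary as an ``immediate consequence'' of Lemma~\ref{Min_Pos_Bigon} without writing out any details, and your case analysis on the two configurations of Figure~\ref{fig:Punctured_Bigon} is precisely the verification being left to the reader. Your preliminary step (using Lemma~\ref{Non_Isolating_Essential} to rule out $\delta=\alpha_p$ so that Lemma~\ref{Min_Pos_Bigon} applies) and the parity argument for the final sentence are both sound; the separation claim in configuration~(b)---that $H_1$ and $H_2$ lie on the same side of each arc while $B$ lies on the other---is indeed the only point requiring care, and it holds as you describe.
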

\begin{Lemma}\label{lem:gamma}
  If $\mc{A}$ is maximal, then if there are two arcs 
  $\alpha,\beta\in\mc{A}$ 
  as in Figure~\ref{fig:Punctured_Bigon} $b)$, 
  then the arc $\gamma$ in Figure~\ref{fig:Gamma_Picture} belongs to 
  $\mc{A}$. 
  \begin{center}
    \begin{figure}
      \includegraphics[width=0.3\linewidth]{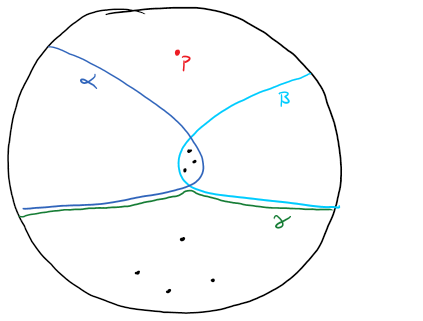}
      \caption{If $\alpha,\beta\in \mc{A}$, then the 
      arc $\gamma$ must be in $\mc{A}$.}
      \label{fig:Gamma_Picture}
    \end{figure}
  \end{center}
\end{Lemma}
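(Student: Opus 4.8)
The proof rests on the maximality of $\mc{A}$: we read off the arc $\gamma$ from Figure~\ref{fig:Gamma_Picture}, check that it is a legitimate potential member of a good family, and conclude that its homotopy class must already be realized in $\mc{A}$. Concretely, $\gamma$ is the third arc determined by $\alpha$, $\beta$ and the puncture $p$ in the configuration of Figure~\ref{fig:Punctured_Bigon}$b)$: it is an embedded concatenation of a sub-arc of $\alpha$ and a sub-arc of $\beta$, reglued around the region containing $p$, so that after filling in $p$ it becomes homotopic to $\alpha$ (hence also to $\beta$), while in $D_n$ itself it is homotopic to neither. The plan is to verify, for $\mc{A}\cup\{\gamma\}$, the defining properties of a good family (other than distinctness of homotopy classes) and then invoke maximality.

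First I would dispatch the easy properties. The endpoints of $\gamma$ lie on $\del D_n$ by construction, and $\gamma$ is simple: cutting $\alpha$ and $\beta$ at their common point and regluing along the boundary of the region containing $p$ creates no self-intersections. For essentiality, by Remark~\ref{Essential_Separates} it suffices to exhibit two punctures separated by $\gamma$; these are visible in Figure~\ref{fig:Gamma_Picture}, and alternatively one argues that since $\alpha$ stays essential after filling in $p$ (Lemma~\ref{Non_Isolating_Essential}) and $\gamma\sim\alpha$ there, $\gamma$ separates in $D_{n-1}$, hence in $D_n$, the same two punctures that $\alpha$ does.

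The substantive step is to show $|\gamma\cap\delta|\le 2$ for every $\delta\in\mc{A}$. For $\delta=\alpha_p$ this is clear: the once-punctured disk cut off by $\alpha_p$ sits on the $p$-side and $\gamma$ may be pushed off it, so $|\gamma\cap\alpha_p|=0$; for $\delta\in\{\alpha,\beta\}$ one reads $|\gamma\cap\delta|\le 2$ off the figure. Now fix $\delta\in\mc{A}\ssm\{\alpha,\beta,\alpha_p\}$, put $\mc{A}$ in minimal position (after a small perturbation removing triple points, as in the construction of $X(\mc{A})$), and let $E\subset D_n$ be the embedded sub-disk bounded by the two discarded sub-arcs of $\alpha$ and $\beta$ together with an arc of $\del D_n$; by the structure of Figure~\ref{fig:Punctured_Bigon}$b)$ the only puncture inside $E$ is $p$, and outside $E$ the arc $\gamma$ coincides with $\alpha$. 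Hence $|\gamma\cap\delta|$ is the number of intersections of $\delta$ with $\alpha$ outside $E$, plus the number of points of $\delta\cap\gamma$ inside $E$. I would then analyze the strands of $\delta\cap E$: each such strand enters and exits through $\del E$, so through $\alpha$, $\beta$, or $\del D_n$; using that $\delta$ meets $\alpha$ and $\beta$ each at most twice and that there are no bigons or half-bigons of $\delta$ with $\alpha$, with $\beta$, or with $\gamma$ (Lemma~\ref{Bigon_Criterion}), one shows that any strand of $\delta\cap E$ that would push the count above $|\alpha\cap\delta|$ can be removed by a homotopy of $\delta$ — contradicting minimality — or else forces a third intersection of $\delta$ with $\alpha$ or $\beta$. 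The conclusion is $|\gamma\cap\delta|\le|\alpha\cap\delta|\le 2$.

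Finally, maximality closes the argument: either $\gamma$ is homotopic to some arc of $\mc{A}$, in which case the homotopy class of $\gamma$ is already in $\mc{A}$ and we are done, or else $\mc{A}\cup\{\gamma\}$ (with representatives chosen in minimal position) satisfies all four conditions of a good family, contradicting maximality of $\mc{A}$. The main obstacle is the intersection count: the naive bound coming from $\gamma\subset\alpha\cup\beta$ is only $|\gamma\cap\delta|\le|\alpha\cap\delta|+|\beta\cap\delta|\le 4$, and shaving it down to $2$ is exactly where the local picture near the crossing of $\alpha$ and $\beta$ and near $p$, together with the bigon criterion, must be used carefully; it is likely that Lemma~\ref{Min_Pos_Bigon} or Corollary~\ref{Three_Disjoint_Arcs} must also be invoked to exclude the degenerate sub-configurations of $\delta$ relative to $\alpha$, $\beta$ and $p$.
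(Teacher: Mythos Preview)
Your overall plan matches the paper's: one checks that $\gamma$ can be added to $\mc{A}$ while keeping the family good, and then maximality finishes. You also correctly locate the only nontrivial step, the bound $|\gamma\cap\delta|\le 2$. But your execution of that step has a real gap.

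The problem begins with your region $E$. In configuration~$b)$ the arcs $\alpha$ and $\beta$ intersect \emph{twice}, and $\gamma=\gamma_\alpha\cup\gamma_\beta$ is obtained by surgering at one of those two crossings. The discarded sub-arcs $\alpha\ssm\gamma_\alpha$ and $\beta\ssm\gamma_\beta$ then meet at the \emph{other} crossing, so together with an arc of $\del D_n$ they do not bound an embedded disk; the assertion ``outside $E$ the arc $\gamma$ coincides with $\alpha$'' has no clear meaning as stated. There \emph{is} a region playing the role you want --- the strip between $\gamma$ and $\alpha$, which indeed contains only $p$ --- but its boundary consists of $\alpha\ssm\gamma_\alpha$ together with $\gamma_\beta$, not the two discarded sub-arcs, and the strand analysis you would then need is not actually carried out. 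The target inequality $|\gamma\cap\delta|\le|\alpha\cap\delta|$ is also asymmetric in $\alpha,\beta$ with no justification, and you yourself flag that further lemmas may be required; this is precisely the place where a proof is needed rather than a sketch.

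The paper's argument avoids all of this and does not compare with $|\alpha\cap\delta|$. One assumes $|\delta\cap\gamma|\ge 3$ and uses pigeonhole on the decomposition $\gamma=\gamma_\alpha\cup\gamma_\beta$ to get, say, $|\delta\cap\gamma_\beta|=2$. Letting $I_\delta$ be the sub-arc of $\delta$ between those two crossings, one splits into two cases according to which side of $\beta$ contains $I_\delta$. In each case, minimal position of $\delta$ with $\alpha$ and $\beta$ forces two intersections of $\delta$ with $\alpha\ssm\gamma_\alpha$, hence $|\delta\cap\gamma_\alpha|=0$ and $|\delta\cap\gamma|=2$, a contradiction. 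Neither Lemma~\ref{Min_Pos_Bigon} nor Corollary~\ref{Three_Disjoint_Arcs} is invoked.
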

We think of $\gamma$ as a subset of $\alpha\cup\beta$, 
and set $\gamma_{\alpha}=\gamma\cap\alpha$ and 
$\gamma_{\beta} = \gamma\cap\beta$. 
\begin{proof}
  We must check that if $\delta$ is an arc in $\mc{A}$, it cannot intersect 
  $\gamma$ more than twice.
  Note that by construction of $\gamma$, if $\delta$ 
  intersects $\gamma$ at any point, then it must either 
  intersect $\gamma_\alpha$ or $\gamma_\beta$.

  We assume that $|\delta\cap\gamma|\ge 3$. By the 
  pigeonhole principle, without loss of 
  generality, we can assume $|\delta\cap\gamma_{\beta}|=2$. Let 
  $\{x,y\}$ be the two intersection points, 
  and consider the segment of $\delta$ between them, 
  denoted $I_{\delta}$ as in Figure~\ref{fig:I_Delta}.
  \begin{center}
    \begin{figure}
      \includegraphics[width=0.5\linewidth]{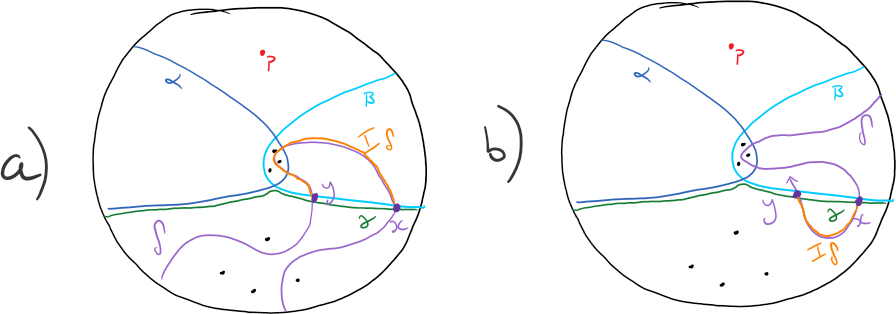}
      \caption{Drawing parts of $\delta$ in different configurations.}
      \label{fig:I_Delta}
    \end{figure}
  \end{center}
  The arc $\beta$ splits $D_n$ into two disks, one of which contains 
  $p$, and the other, denoted $D$, which does not. Suppose first that 
  $I_{\delta}\subset D$. Then since all of the punctures in 
  $D$ are enclosed by $\alpha$ 
  (see Figure~\ref{fig:Gamma_Picture}), it follows 
  that the only way in which $I_{\delta}$ will not 
  form an empty bigon with $\beta$ is if it intersects 
  $\alpha$ twice in $D$ (see Figure~\ref{fig:I_Delta} $a)$ above). 
  However, this means that $\delta$ intersects $\alpha$ twice 
  outside of $\gamma$ and $|\gamma_{\alpha}\cap\delta|=0$. Thus, 
  $|\gamma\cap\delta|=2$, contradiction.

  On the other hand, if $I_{\delta}\subset D_n\ssm D$, as in 
  Figure~\ref{fig:I_Delta} $b)$, then consider the segment 
  of $\delta$ from $x$ to $\del D_n$ not 
  containing $y$. Since $\delta$ 
  and $\beta$ are in minimal position, this segment 
  does not form an empty half-bigon with $\beta$. This means that 
  it must enter the bigon between $\alpha$ and 
  $\beta$. Thus it intersects $\alpha$ twice, 
  so as before, $|\gamma\cap \delta|=2$, contradiction.
\end{proof}
\begin{Lemma} \label{Three_Arcs_Homotopy}
  If $\mc{A}$ is maximal, then any set $S$ of arcs 
  in $\mc{A}$ which become pairwise homotopic 
  and remain essential after removing $p$ is of size at most $3$. 
  If $|S| = 3$, then the arcs in $S$ are as in 
  Figure~\ref{fig:Gamma_Picture}, and if $|S| = 2$, then they are 
  as in Figure~\ref{fig:Punctured_Bigon}a.
\end{Lemma}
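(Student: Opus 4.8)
The plan is to classify the pairwise positions of arcs in $S$, use this to split $S$ by a ``side of $p$'' invariant, and then whittle the count down to $3$.

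First I would fix the common homotopy class: all arcs of $S$ are pairwise homotopic and essential in $D_{n-1}:=D_n\ssm\{p\}$, so they induce one and the same partition $A\sqcup B$ of the remaining punctures (with $A,B\neq\emptyset$). Putting $p$ back, each $\delta\in S$ has $p$ on a well-defined side of it — the $A$-side or the $B$-side — and never alone, since otherwise $\delta$ would separate no two of the other punctures. By Lemma~\ref{Min_Pos_Bigon}, any two arcs of $S$ are in one of the two pictures of Figure~\ref{fig:Punctured_Bigon}: disjoint with a strip between them containing only $p$, or crossing twice with $p$ in one half-bigon. Rephrasing ``$\delta$ separates $p$ from $q$'' in terms of sides (it is equivalent, for any puncture $q\neq p$, to $p$ lying on the side of $\delta$ opposite the one containing $q$), Corollary~\ref{Three_Disjoint_Arcs} becomes: two arcs of $S$ are disjoint iff $p$ lies on opposite sides of them. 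So I would write $S=S_A\sqcup S_B$ by the side of $p$; same-side pairs cross twice, opposite-side pairs are disjoint.

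Next I would prove a uniqueness constraint: for fixed $\gamma\in S_B$ there is at most one arc of $S_A$ disjoint from it. If $\alpha\in S_A$ is disjoint from $\gamma$, the strip between them contains only $p$, so inside the component $N$ of $D_n\ssm\gamma$ containing $p$ (a disk whose punctures are $\{p\}\cup B$, with $\gamma$ one of its boundary arcs) the arc $\alpha\cap N$ separates the single puncture $p$ from everything else; such an arc of a disk is unique up to isotopy (it cuts off a once-punctured sub-disk, and such a sub-disk is unique up to isotopy in a disk), so $\alpha$ is determined by $\gamma$. Since opposite-side pairs are always disjoint, this gives $|S_A|\le1$ whenever $S_B\neq\emptyset$, and symmetrically; hence at most one of $S_A,S_B$ has size $\ge2$, and if both are nonempty then $|S_A|=|S_B|=1$, so $|S|=2$ and the two arcs are disjoint, matching Figure~\ref{fig:Punctured_Bigon}a. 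If instead $|S|=2$ with the two arcs crossing twice, then by Lemma~\ref{Min_Pos_Bigon} they look like Figure~\ref{fig:Punctured_Bigon}b, so Lemma~\ref{lem:gamma} yields an arc $\gamma\in\mc{A}$; by construction $\gamma$ is homotopic to them in $D_{n-1}$ and stays essential there, hence $\gamma\in S$ — contradicting $|S|=2$. Thus $|S|=2$ always forces Figure~\ref{fig:Punctured_Bigon}a. For $|S|\ge3$ the uniqueness constraint forbids the $(1,1)$ split and forbids both sides from having $\ge2$ elements, so up to swapping $A,B$ we have $S=S_A$ with every pair crossing twice; picking $\alpha,\beta\in S$, Lemma~\ref{lem:gamma} gives $\gamma_{\alpha\beta}\in\mc{A}$, which lies in $S$, is distinct from $\alpha,\beta$, and cannot lie in $S_B$ by the uniqueness step, so $\gamma_{\alpha\beta}\in S_A$ and $\{\alpha,\beta,\gamma_{\alpha\beta}\}$ is exactly the configuration of Figure~\ref{fig:Gamma_Picture}.

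The hard part is the last step, ruling out $|S_A|\ge4$. Here the plan is a reduction to $D_3$. For $\alpha,\beta\in S_A$, combining the case analysis in the proof of Lemma~\ref{Min_Pos_Bigon} with the fact that $\alpha,\beta$ induce the partition $A\sqcup B$, the bigon between them contains all of $B$, one half-bigon contains all of $A$, and the other half-bigon contains only $p$; so collapsing all of $A$ to one marked point and all of $B$ to one marked point leaves every pair of $S_A$ crossing exactly twice with each complementary region still carrying a marked point, whence (by the Bigon Criterion) they stay in minimal position and pairwise non-homotopic. This turns $S_A$ into a family of simple arcs in $D_3$, each separating one marked point from the other two, with the ``removable'' marked point on a prescribed side, pairwise crossing exactly twice, and I expect the main obstacle to be the finite but delicate verification that such a family in $D_3$ has at most three members: cutting $D_3$ along one of them and recording how the others must wind around the remaining two marked points while crossing the first exactly twice in the prescribed bigon/half-bigon pattern, a fourth arc is forced to cross one of the first three at least four times. (An alternative to attempt is purely combinatorial: for a putative size-four $S_A$, Lemma~\ref{lem:gamma} sends each of the $\binom{4}{2}$ pairs to a ``corner-cut'' arc lying among the other two members, and monotonicity of the corner-cut construction should make this assignment inconsistent.) Once $|S_A|\le3$ is in hand, the earlier steps deliver the full statement.
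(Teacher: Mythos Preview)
Your side decomposition $S=S_A\sqcup S_B$ and the reformulation of Corollary~\ref{Three_Disjoint_Arcs} as ``disjoint $\Leftrightarrow$ opposite sides'' are correct, and the argument that $|S|=2$ forces configuration~(a) is fine. The gap is the uniqueness step. The assertion that an arc in $N$ cutting off a once-punctured sub-disk adjacent to $\gamma$ is unique up to isotopy is false once $N$ carries at least two punctures besides~$p$: if $c\subset N$ is a simple closed curve enclosing $p$ together with one puncture of~$B$, the Dehn twist $T_c$ carries such an $\alpha$ to a non-isotopic arc with the identical strip property. More directly, the very configuration the lemma describes for $|S|=3$ contradicts your conclusion. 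The arc $\gamma$ of Figure~\ref{fig:Gamma_Picture} is, after a small perturbation, disjoint from both $\alpha$ and~$\beta$; by your own rule it therefore lies in $S_B$, while $\alpha,\beta\in S_A$. So $|S_A|=2$ with $S_B\neq\emptyset$ genuinely occurs, and the inference ``both sides nonempty $\Rightarrow|S|\le2$'' cannot stand.

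This also undoes your treatment of $|S|\ge3$. You claim $\gamma_{\alpha\beta}\in S_A$, but since $\gamma_{\alpha\beta}$ is disjoint from $\alpha$ and~$\beta$ it lies in $S_B$; hence the case $S=S_A$ with all pairs crossing never arises, and the reduction to $D_3$ is addressing an empty case. The paper's argument avoids any uniqueness claim about sub-disks. Having produced $\alpha,\beta$ in configuration~(b) and $\gamma\in S$ via Lemma~\ref{lem:gamma}, it takes a putative fourth arc~$\delta$ and first shows by a short region chase that $|\delta\cap\gamma|=2$; then Corollary~\ref{Three_Disjoint_Arcs} (in your language: $\delta$ is on the same side as~$\gamma$, hence opposite to $\alpha,\beta$) gives $\delta\cap\alpha=\delta\cap\beta=\emptyset$, which is incompatible with $|\delta\cap\gamma|=2$ because $\gamma\subset\alpha\cup\beta$. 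So the side idea is the right organizing principle, but the finishing move is the inclusion $\gamma\subset\alpha\cup\beta$, not uniqueness of a once-punctured sub-disk.
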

\begin{proof}
  If $|S|\ge 3$, then by Corollary~\ref{Three_Disjoint_Arcs}, and 
  Lemma~\ref{Min_Pos_Bigon} it follows that there exist 
  $\alpha,\beta\in S$ that are in configuration $b)$ 
  of Figure~\ref{fig:Punctured_Bigon}, and that the arc 
  $\gamma$ described in this figure is also in $S$. We will show 
  that no fourth arc $\delta$ can be added to $S$ in this configuration.

  Using Figure~\ref{fig:Punctured_Bigon}b, 
  we split $D_n$ into five regions, 
  \Romannum{1}-\Romannum{5}:
  \begin{center}
    \begin{figure}
      \includegraphics[width=0.3\linewidth]{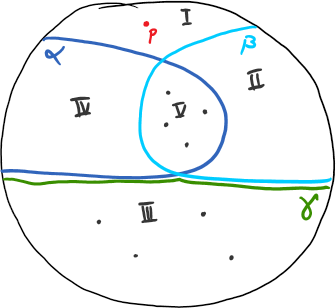}
      \caption{Regions of $D_n$, split along $\alpha$ and $\beta$.}
      \label{fig:No_p_Essential}
    \end{figure}
  \end{center}
  \begin{Claim} $|\delta\cap \gamma|=2$
  \end{Claim}
  \begin{proof}
  We justify the claim by contradiction.
    If $\delta\cap \gamma=\emptyset$, then by 
    Corollary~\ref{Three_Disjoint_Arcs}, $\delta$ lies in the 
    complement of \Romannum{3} and 
    must intersect $\alpha$ and $\beta$ twice. Note that by parity of 
    intersection numbers, $\delta$ must have both endpoints in the 
    same region, and this region cannot be \Romannum{1}, or else 
    $\delta$ would span an empty half-bigon with $\alpha$ or $\beta$. 
    Without loss of generality, let the endpoints of 
    $\delta$ lie in \Romannum{2}. In order for no half-bigons to 
    be formed, if we follow $\delta$ from these endpoints, we 
    must enter \Romannum{5}, and from there we must go 
    to \Romannum{4} (see Figure~\ref{fig:Delta_Regions}). 
    \begin{center}
    \begin{figure} \label{Delta_Regions}
      \includegraphics[width=0.3\linewidth]{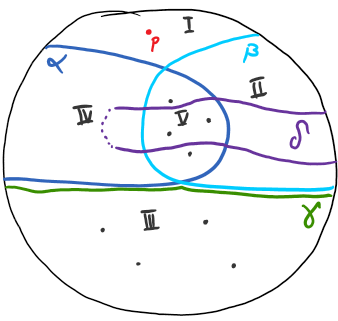}
      \caption{The arc $\delta$ forms an empty bigon with $\beta$.}
      \label{fig:Delta_Regions}
    \end{figure}
  \end{center}
    Then, having intersected $\alpha$ and $\beta$ twice, 
    they must join and form an empty bigon with $\beta$ as 
    in Figure~\ref{fig:Delta_Regions}, contradiction. 
    This justifies the claim.
  \end{proof}
  Now, since $|\delta\cap\gamma|=2$, 
  by Corollary~\ref{Three_Disjoint_Arcs}, 
  it follows that $\delta$ separates all the punctures in 
  \Romannum{5} from $p$. Since 
  $\alpha,\beta$ also separate all of the 
  punctures in \Romannum{5} from $p$, we get by 
  Corollary~\ref{Three_Disjoint_Arcs} that 
  $|\delta\cap\alpha|=|\delta\cap\beta|=0$, which is a contradiction, 
  since $\gamma\subset \alpha\cup\beta$.
\mute{
  Note that $\delta$ cannot be disjoint from 
  $\alpha$, by the proof of Lemma~\ref{lem:gamma}. Similarly, 
  $|\delta\cap\beta|\neq 0$, so 
  Since $\gamma\subset \alpha\cup\beta$, and 
  $|\delta\cap\alpha|,|\delta\cap\beta|\le 2$, precisely one of the 
  following can occur up to homotopy:
  \begin{itemize}
    \item $\delta\cap\Romannum{5}$ is a connected,
    \item $\delta$ passes through \Romannum{1}, above $p$.
  \end{itemize}
  In fact, the segment $\delta \cap\Romannum{5}$ cannot 
  pass between any punctures in \Romannum{5}, or else it would 
  separate them. Thus, it must pass above or below all of them. 

  Since $|\delta\cap\gamma|=2$, one of the following 
  can occur:
  \begin{itemize}
    \item $\delta$ starts and ends below $\gamma$, not 
      forming any empty half-bigons with it, or 
    \item $\delta$ starts and ends above $\gamma$.
  \end{itemize}
  In the former case, $\delta$ cannot pass under all of the punctures 
  in \Romannum{5}, since it would then form an empy bigon with $\gamma$. 
  However, if it passes above them, then it will not become homotopic 
  to $\gamma$ after removing $p$.

  In the latter case,

  \mute{
  \begin{enumerate}
    \item If $\delta$ starts in \Romannum{1}, then by mirror symmetry, 
      without loss of generality, $\delta$ continues into \Romannum{2}, 
      crossing $\beta$, and enclosing $p$ in the half-bigon 
      formed. In order to prevent the formation of an empty 
      half-bigon, $\delta$ cannot go directly to the boundary, or 
      enter \Romannum{5}, so it must enter \Romannum{3}.

      If $\delta$ enters \Romannum{3}, then it can either 
      terminate in \Romannum{3}, or go into \Romannum{4}. Note that 
      it cannot return to \Romannum{2}, since then it would intersect 
      $\beta$ three times. If $\delta$ terminates in \Romannum{3}, 
      then it cannot separate any punctures in \Romannum{3}, 
      since $\alpha$ does not separate any of them either, and 
      $\delta$ becomes homotopic to $\alpha$ after removing $p$. Thus, 
      in order to not form an empty bigon with respect to $\gamma$, 
      $\delta$ must pass to the right of all of the punctures 
      in \Romannum{3}, and hence, is homotopic to $\alpha$. 

      If $\delta$ goes into \Romannum{4}, then it must terminate there, 
      since it cannot enter \Romannum{1}, since it will span an empty 
      bigon with $\alpha$. It cannot 
      return to \Romannum{3}, since then it would 
      span an empty bigon with $\alpha$ or intersect $\beta$ 
      three times. 
      However, this contradicts minimal position with respect 
      to $\alpha$, since terminating in \Romannum{4} after 
      coming from \Romannum{3} will span an empty half-bigon between 
      $\delta$ and $\alpha$.
    \item If $\delta$ starts in \Romannum{2}, then in order to 
      not span an empty half-bigon, it must go directly to 
      \Romannum{5}. From there, it can either enter \Romannum{4} or 
      return to \Romannum{2}. 
      \begin{enumerate}
        \item If $\delta$ returns to \Romannum{2}, then either it 
          terminates in \Romannum{2}, moves into \Romannum{1} 
          or moves into \Romannum{3}. In the bigon $B$ between 
          $\alpha$ and $\delta$ contained in \Romannum{4}, 
          there must be some punctures, so that $\delta$ and 
          $\alpha$ would be in minimal position. If $\delta$ 
          terminates in \Romannum{2}, then if $B$ contains 
          all of the punctures in \Romannum{4}, then 
          $\delta=\beta$, and otherwise, it separates 
          punctures in \Romannum{4}, and $\alpha$ does not, giving a 
          contradiction.

          If $\delta$ moves to \Romannum{1}, then it must terminate 
          there, since $\delta$ crosses $\alpha$ twice already, 
          and entering \Romannum{2} would force $\delta$ 
          to terminate in \Romannum{2}, forcing an empty hal-bigon 
          between $\beta$ and $\delta$. However, 
          it cannot terminate in \Romannum{1} 
          since it cannot start in \Romannum{1}, and the 
          arcs are unorietnted.
          \mute{In order to not form an empty 
          half-bigon with $\beta$, $\delta$ 
          must pass to the left of $p$. As above, $B$ must contain 
          all of the punctures in \Romannum{4}, and 
          then $\delta=\gamma$.}

          If $\delta$ continues into \Romannum{3}, then it must 
          terminate in \Romannum{3}, since $\delta$ 
          cannot cross $\alpha$ again, and if it re-enters 
          \Romannum{2}, then it will form an empty half-bigon. As 
          before, $\delta$ cannot separate any punctures in 
          \Romannum{3}, and $B$ must contain all 
          of the punctures in \Romannum{5}. Thus, $\delta=\beta$ 
          if it passes to the right of the punctures in 
          \Romannum{3}, and is not in $S$ if it passes to the 
          left of them.
        \item If $\delta$ goes into \Romannum{4}, then in order 
          for it not to form an empty bigon with $\beta$, it either 
          continues into \Romannum{1}, \Romannum{3}, or 
          terminates in \Romannum{2}. 

          If $\delta$ enters \Romannum{1}, then the case where 
          it terminates there was already taken care of, so 
          we only consider the case where $\delta$ continues 
          into \Romannum{2} or \Romannum{3}. 
          However, in the former case, $\delta$ 
          intersects $\alpha$ and $\beta$ twice, so it must terminate 
          in \Romannum{2}, and form an empty half-bigon, contradicting 
          minimal position. 

          If $\delta$ enters \Romannum{3}, then for the same reasons 
          as above, it must terminate there. 
          As before, it cannot separate 
          any punctures in \Romannum{3}, or in \Romannum{5}, 
          forcing $\delta=\alpha$, $\delta=\beta$, or 
          $\delta\not\in S$ depending on whether $\delta$ goes 
          above or below the punctures in \Romannum{5}, 
          and whether $\delta$ goes to the left or the right 
          of the punctures in \Romannum{3}.

          If $\delta$ terminates in \Romannum{4}, then 
          it cannot separate punctures in \Romannum{5}, 
          so it must pass above or below all of them. If 
          it passes above them, then $\delta\not\in S$, 
          as it becomes nullhomotopic after removing $p$, and if 
          it passes below them, then $\delta=\gamma$.
      \end{enumerate}
    \item The last case we need to treat is 
      when $\delta$ starts in \Romannum{3} and 
      ends in \Romannum{3}. From \Romannum{3}, 
      $\delta$ can either go to \Romannum{2} or \Romannum{4}. By symmetry, 
      we only need to treat the case when $\delta$ 
      enters \Romannum{4}. Note that $\delta$ 
      cannot terminate immediately in \Romannum{3}, since in order for 
      it to be essential, and in $S$, it cannot separate any 
      punctures in \Romannum{3}, and hence must be equal to $\gamma$.
      
      If $\delta$ enters \Romannum{4}, then either it enters 
      \Romannum{5} or \Romannum{1}. In the latter case, 
      it must pass above $p$, otherwise we homotope $\delta$ 
      to be in the former case. 
      \begin{enumerate}
        \item If $\gamma$ enters \Romannum{1}, then it cannot 
          retun to \Romannum{4}, since it would then 
          intersect $\alpha$ three times. Thus, it must 
          continue to \Romannum{2}, and then to 
          \Romannum{3}, where it terminates. Now, 
          $\delta$ and $\alpha$ form two half-bigons and a 
          full bigon. In order for 
          $\delta$ to be in minimal position with respect to 
          $\gamma$, there must be punctures in \Romannum{3} 
          lying in both of the half-bigons spanned by these arcs. 
          Additionally, note that all of the punctures in 
          \Romannum{5} will then lie in the bigon between 
          $\alpha$ and $\delta$. Thus, $\alpha$ and 
          $\gamma$ have punctures distinct from $p$ 
          in every bigon and half-bigon, and thus, they 
          cannot be homotopic after removing $p$.
        \item If $\delta$ enters \Romannum{5}, then it either continues 
          to \Romannum{2} and \Romannum{3}, and terminates there, 
          or it returns to \Romannum{4}. In the 
          former case, the bigon between $\alpha$ 
          and $\delta$ must contain a puncture, 
          and by the region assignment, this puncture 
          must lie in \Romannum{5}. Thus, as above, we get 
          that $\delta$ does not become homotopic to $\alpha$ 
          after removing $p$. 

          If $\delta$ returns to \Romannum{4}, then it forms a bigon 
          with $\beta$ in \Romannum{5}, and this bigon must 
          contain a puncture $q$ distinct from $p$, or else 
          $\delta$ and $\beta$ are not in minimal position. From 
          here, if $\delta$ returns to \Romannum{3}, then 
          the same argument as in the previous two cases will 
          tell us that $\beta$ and $\delta$ do not become 
          homotopic after removing $p$. Thus, $\delta$ must 
          go to \Romannum{1}. There, having already intersected 
          $\alpha$ and $\beta$ twice, it must terminate - a 
          case which was alrady covered.
      \end{enumerate}
  \end{enumerate}}}
\end{proof}

Let $\mc{S}$ be the set of all strips between arcs in $\mc{A}$ 
which contain the single puncture $p$. Note that by 
Lemma~\ref{Non_Isolating_Essential}, $\alpha_p$ cannot be an 
arc in such a strip.
\begin{Corollary} \label{Strip_Counting}
  Let $\mc{A}$ be a maximal good family of arcs. Assume that 
  $\alpha\in \mc{A}$ is an isolated arc, such that 
  one of the components of $D_n\ssm \alpha$ contains a single puncture 
  $p$. Let $\mc{A}'$ be the set of homotopy classes of essential arcs 
  obtained from $\mc{A}$ by removing $p$. Then
  \begin{align*}
    |\mc{A}| - |\mc{A}'| \le 1+|\mc{S}|
  \end{align*}
\end{Corollary}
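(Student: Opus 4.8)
The plan is to compute $|\mc{A}| - |\mc{A}'|$ by grouping the arcs of $\mc{A}$ according to the homotopy class they determine after removing $p$, and then to charge the resulting excess to strips in $\mc{S}$. Write $D_{n-1}$ for $D_n$ with $p$ removed, and let $\pi$ send each arc of $\mc{A}$ to its homotopy class in $D_{n-1}$. By Lemma~\ref{Non_Isolating_Essential}, $\alpha_p$ is the only arc whose image is trivial, so $\pi$ restricts to a surjection $\mc{A}\ssm\{\alpha_p\}\to\mc{A}'$; its fibers are exactly the maximal subsets $S\subseteq\mc{A}\ssm\{\alpha_p\}$ whose members become pairwise homotopic, and stay essential, after removing $p$. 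Counting arcs fiber by fiber gives
\begin{align*}
  |\mc{A}| - |\mc{A}'| = 1 + \sum_{S}(|S|-1),
\end{align*}
the sum running over these fibers, so it suffices to produce an injection of $\{(S,j) : |S|\ge 2,\ 1\le j\le |S|-1\}$ into $\mc{S}$.

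By Lemma~\ref{Three_Arcs_Homotopy} (which uses maximality of $\mc{A}$) every fiber $S$ with $|S|\ge 2$ satisfies $|S|\in\{2,3\}$, and I would treat the two cases separately. If $|S|=2$, write $S=\{\alpha,\beta\}$; then $\alpha,\beta$ are as in Figure~\ref{fig:Punctured_Bigon}a, that is, disjoint, and the strip between them --- the middle component of $D_n\ssm(\alpha\cup\beta)$ --- contains $p$ and no other puncture, exactly as in case~(1) of the proof of Lemma~\ref{Min_Pos_Bigon}. Assign $(S,1)$ to this strip. If $|S|=3$, then by Lemma~\ref{Three_Arcs_Homotopy} the three arcs are as in Figure~\ref{fig:Gamma_Picture}: two of them, $\alpha$ and $\beta$, cross twice, while the third, $\gamma$, lies up to homotopy on $\alpha\cup\beta$ and hence, being in minimal position, is disjoint from each of $\alpha$ and $\beta$. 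For $\delta\in\{\alpha,\beta\}$, the arcs $\gamma$ and $\delta$ are disjoint and become homotopic after removing $p$ but, as $\mc{A}$ is good, are not homotopic in $D_n$; therefore the strip between them contains exactly the puncture $p$. Assign $(S,1)$ and $(S,2)$ to the strips between $\gamma,\alpha$ and between $\gamma,\beta$.

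It remains to check injectivity. Each strip produced above is the middle component of the complement of a disjoint pair of arcs of $\mc{A}$, and that pair is recovered from the strip as its two \emph{long} boundary arcs; any such pair is contained in a single fiber, so two of our strips can coincide only if they come from the same fiber. Within a $2$-element fiber there is a single output, and within a $3$-element fiber the two output pairs $\{\gamma,\alpha\}$ and $\{\gamma,\beta\}$ are distinct since $\alpha\ne\beta$, the labelling being unambiguous because $\gamma$ is the unique arc of $S$ disjoint from the other two (an arc of $S$ lying up to homotopy on the union of the other two cannot be in minimal position while crossing either of them). Hence the assignment is injective, $\sum_{S}(|S|-1)\le|\mc{S}|$, and $|\mc{A}|-|\mc{A}'|\le 1+|\mc{S}|$.

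The step I expect to be the real obstacle is the $|S|=3$ analysis: one needs the precise shape of a homotopy triple, the fact that it singles out the arc $\gamma$, and that $\gamma$ is genuinely disjoint from the other two, so that the two strips attached to the triple are honest, distinct members of $\mc{S}$. All of this is the content of Lemma~\ref{Three_Arcs_Homotopy}; given it, the surrounding count is routine.
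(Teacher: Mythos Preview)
Your proof is correct and follows essentially the same route as the paper: partition $\mc{A}\ssm\{\alpha_p\}$ into $\sim_p$-classes, invoke Lemma~\ref{Three_Arcs_Homotopy} to see each class has size at most $3$, and observe that a class of size $k$ contributes $k-1$ strips to $\mc{S}$. You have been more careful than the paper in spelling out the injectivity of the assignment (distinct classes give distinct strips, and the two strips from a size-$3$ class are distinct because $\gamma$ is singled out as the unique member disjoint from the other two), which the paper leaves implicit.
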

\begin{proof}
  Consider the equivalence relation $\sim_p$ on $\mc{A}$ where 
  $\alpha\sim_p \beta$ if $\alpha$ and $\beta$ are homotopic 
  after removing $p$. By Lemma~\ref{Min_Pos_Bigon}, 
  Lemma~\ref{lem:gamma}, and Lemma~\ref{Three_Arcs_Homotopy}, 
  every equivalence class contains one, 
  two, or three elements, forming zero, one, or two 
  strips, respectively. Since
  $|\mc{A'}| = \{[\alpha]:\alpha\in \mc{A}\}\ssm \alpha_p$, 
  we get the result (where the $+1$ comes from $\alpha_p$).
\end{proof}
\mute{\begin{Definition}
  Let $S\in \mc{S}$ be a strip. After removing $p$, 
  a homotopy between $\alpha$ and $\beta$ gives rise to 
  an embedding of $H:[0,1]^2\to S$, where $H(0,t)=\alpha(t)$ 
  and $H(1,t)=\beta(t)$. Let $(x,y)$ be the preimage of 
  $p$ under this embedding, and consider the image of $[0,1]$ in $D_n$ 
  under the map $t\to H(x,t)$. The image passes through $p$, and 
  will be called the \emph{core} of $S$, and will be denoted by $Core(S)$.
\end{Definition}}
\begin{Lemma} \label{Strip_Intersection}
  $\alpha_p\subset \bigcap_{S\in \mc{S}} S$, and its endpoints 
  are on the same connected component of $\del D_n\cap S$ for any 
  strip $S\in\mc{S}$.
\end{Lemma}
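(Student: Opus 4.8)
The plan is to fix a strip $S\in\mc{S}$, say bounded by arcs $\alpha,\beta\in\mc{A}$. By Lemma~\ref{Non_Isolating_Essential} (as remarked just before the statement) neither $\alpha$ nor $\beta$ equals $\alpha_p$, and since $\alpha_p$ is isolated it is disjoint from both. Write $\Delta$ for the closed disk cut off by $\alpha_p$ that contains the single puncture $p$, so that $D_n\ssm\alpha_p$ has interior $\Delta$ and interior $\Delta'$ as its two sides. The engine of the whole proof is the following claim, which I would establish first: \emph{no arc $\gamma\in\mc{A}$ with $\gamma\neq\alpha_p$ is contained in $\bar\Delta$.} Indeed, suppose $\gamma\subset\bar\Delta$; then $\gamma$ has both endpoints on the boundary arc $\del D_n\cap\bar\Delta$, so it cuts off a subdisk of $\Delta$, and since $\bar\Delta$ contains only the puncture $p$, the only puncture $\gamma$ can separate from another is $p$. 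Thus $\gamma$ separates $p$ from all remaining punctures, so $\gamma$ and $\alpha_p$ induce the same partition of the punctures of the disk $D_n$; equivalently, the $p$-side of $\gamma$ is a once-punctured subdisk $U\subset\Delta$ and $\Delta\ssm\bar U$ is an empty strip between $\gamma$ and $\alpha_p$, giving $\gamma\sim\alpha_p$. This contradicts the requirement $\alpha\not\sim\beta$ for good families.

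Next I would prove $\alpha_p\subset S$. Since $\alpha_p$ is disjoint from $\alpha$ and $\beta$, it lies in a single component of $D_n\ssm(\alpha\cup\beta)$, which is either $S$ itself or one of the two caps. If it lies in a cap, then $S$, being connected, disjoint from $\alpha_p$, and containing $p$, is contained in $\bar\Delta$; hence the segment $\alpha\cap\bar S$ lies in $\bar\Delta$, and since $\alpha$ is connected and disjoint from $\alpha_p$ this forces $\alpha\subset\bar\Delta$, contradicting the claim. Therefore $\alpha_p\subset S$, and since $S\in\mc{S}$ was arbitrary, $\alpha_p\subset\bigcap_{S\in\mc{S}}S$.

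For the statement about endpoints, note that $\bar S$ is a disk whose boundary circle is a quadrilateral with sides, in cyclic order, a segment of $\alpha$, an arc $b_1\subset\del D_n$, a segment of $\beta$, and an arc $b_2\subset\del D_n$; in particular $\del D_n\cap\bar S=b_1\cup b_2$. Since $\alpha_p\subset\bar S$ is disjoint from $\alpha$ and $\beta$, its endpoints lie in $b_1\cup b_2$. Suppose they lie on different arcs, one on $b_1$ and one on $b_2$. Then $\alpha_p$ joins the two opposite sides $b_1,b_2$ of the quadrilateral $\bar S$, hence separates $\bar S$ into two pieces, one having the $\alpha$-segment on its boundary and one having the $\beta$-segment. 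After possibly interchanging the names $\alpha$ and $\beta$, the puncture $p$ lies in the piece $P$ carrying the $\alpha$-segment; this $P$ is the component of $\bar S\ssm\alpha_p$ on the $p$-side of $\alpha_p$, so $P\subset\bar\Delta$, whence the $\alpha$-segment lies in $\bar\Delta$ and, as before, $\alpha\subset\bar\Delta$ — contradicting the claim. Hence both endpoints of $\alpha_p$ lie on the same connected component of $\del D_n\cap S$.

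The delicate-but-routine points are the general-position assumption (the endpoints of $\alpha_p$ on $\del D_n$ should be disjoint from the arcs $\alpha,\beta$ bounding each strip, which is what lets one freely pass between arcs and their closures in the arguments above) and, in the first claim, the identification of ``$\gamma$ separates $p$ from the rest'' with ``$\gamma\sim\alpha_p$'', which rests on the fact that in a disk a simple essential arc with endpoints on the boundary is determined up to homotopy by the partition of punctures it induces. Everything else is a short case analysis; the only genuine obstacle is keeping the planar bookkeeping of sides and components straight.
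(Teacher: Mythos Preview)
Your proof is correct. The organization differs slightly from the paper's: you first isolate the auxiliary claim that no arc of $\mc{A}$ other than $\alpha_p$ can lie in $\bar\Delta$, and then reduce both assertions of the lemma to this claim. The paper instead argues each part directly. For the containment $\alpha_p\subset S$, the paper observes that if $\alpha_p$ sat in (say) the $\alpha$-cap, then taking a puncture $r$ in the $\beta$-cap (which exists since $\beta$ is essential), $\alpha_p$ would fail to separate $p$ from $r$ --- contradicting that $\alpha_p$ isolates $p$. For the endpoint statement, the paper simply notes that if $\alpha_p$ joined the two boundary arcs of $S$ it would be homotopic to $\alpha$ or $\beta$, since $p$ is the only puncture in $S$; this is exactly the content of your claim specialized to that situation. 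Your route is a bit more uniform (one claim handles both parts), while the paper's first part avoids invoking the ``partition determines homotopy class'' fact and uses only the separating property of $\alpha_p$. Both are equally short.
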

\begin{proof}
  Let $S\in\mc{S}$ be a given strip between two arcs 
  $\alpha,\beta\in\mc{A}$. Since $\alpha_p$ is disjoint 
  from every $\alpha,\beta\in \mc{A}$, it follows 
  that it must lie in one connected component of 
  $D_n\ssm(\alpha\cup\beta)$. 

  If it does not lie in $S$, then without loss 
  of generality, it must lie in the connected component bounded 
  by $\alpha$ and $\del D_n$. In this case, since $\beta$ 
  is essential, there must be a puncture $r$ in the 
  connected component bounded by $\beta$ and $\del D_n$, and 
  $\alpha_p$ cannot separate $r$ and $p$, giving a contradiction.

  If the endpoints of $\alpha_p$ lie in different connected 
  components of $\del D_n \cap S$ for some strip $S$, 
  then $\alpha_p\sim \alpha$ or $\alpha_p\sim \beta$, since $p$ 
  is the only puncture in $S$, contradiction.
\end{proof}
\begin{Definition}
  There exists an arc $\eps$ from $p$ to $\del D_n$ 
  disjoint from $\alpha_p$.
  By Lemma~\ref{Strip_Intersection}, 
  $\eps$ lies in every $S\in \mc{S}$. If $S\in \mc{S}$ 
  then there are two unique (up to homotopy) 
  arcs in $S$ from $p$ to $\del D_n\cap S$. 
  The first arc is $\eps$, 
  and the second will be denoted by $\delta_S$
  (see Figure~\ref{fig:core_def}).
  \begin{center}
    \begin{figure}
      \includegraphics[width=0.3\linewidth]{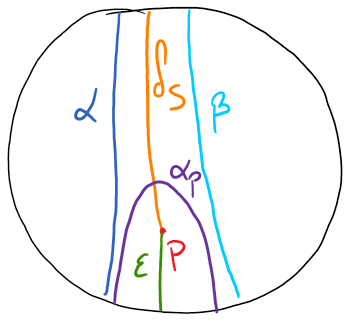}
      \caption{Defining $\delta_S$.}
      \label{fig:core_def}
    \end{figure}
  \end{center}
  We denote
  \begin{align*}
    \mc{G} = \{\delta_S\}_{S\in \mc{S}}\cup \{\eps\}
  \end{align*}
\end{Definition}
\begin{Remark}
  If $S\neq S'$ are two strips, then $\delta_S\neq \delta_{S'}$, 
  since $\delta_S$ and $\eps$ uniquely determine $S$.
\end{Remark}
\begin{Lemma} \label{Cores_Int_Once}
  $\mc{G}$ is a family of arcs 
  from $p$ to $\del D_n$ which pairwise intersect at most once.
\end{Lemma}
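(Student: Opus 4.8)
The plan is to analyze the family $\mathcal{G} = \{\delta_S\}_{S \in \mathcal{S}} \cup \{\eps\}$ by examining, for any two of its members, a small neighborhood of the puncture $p$ together with the structure of the strips they come from. First I would record the basic geometry: every $\delta_S$ is an essential arc from $p$ to $\del D_n$, and by the definition of $\delta_S$, together with $\eps$ it cuts the strip $S$ into (up to homotopy) a single disk containing no punctures apart from $p$ on its boundary. So $\delta_S$ and $\eps$ are disjoint for every $S$ (they bound a bigon-free configuration inside $S$ by construction, and can be taken disjoint in minimal position). The remaining work is to bound $|\delta_S \cap \delta_{S'}|$ for $S \ne S'$ by $1$.

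The key step is to understand how two strips $S, S'$ both containing $p$ interact. Write $S$ as the strip between $\alpha, \beta \in \mathcal{A}$ and $S'$ as the strip between $\alpha', \beta' \in \mathcal{A}$. By Lemma~\ref{Strip_Intersection}, $\alpha_p$ lies in $S \cap S'$ with endpoints on a single boundary component of each, and $\eps$ lies in both strips. I would use these facts to pin down $\delta_S$ and $\delta_{S'}$ near $\del D_n$: each leaves $\del D_n$ on the "far side" of $p$ relative to $\eps$ inside its own strip. The idea is that $\delta_S$ is homotopic rel endpoints into a thin neighborhood of $\alpha \cup (\text{arc of }\del D_n) \cup \beta$, and similarly for $\delta_{S'}$; since the arcs $\alpha, \beta, \alpha', \beta' \in \mathcal{A}$ pairwise intersect at most twice and are in minimal position, one controls how the two thin neighborhoods overlap. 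The cleanest route is probably to argue that if $|\delta_S \cap \delta_{S'}| \ge 2$, then $\delta_S$ and $\delta_{S'}$ form a bigon or half-bigon (not being in minimal position), OR one extracts from the intersection pattern an arc that could be added to $\mathcal{A}$, contradicting maximality — mirroring the style of Lemma~\ref{lem:gamma} and Lemma~\ref{Three_Arcs_Homotopy}. Alternatively, since both $\delta_S$ and $\delta_{S'}$ run from $p$, any two intersection points would let me surger to produce a closed curve or a shorter arc; near $p$ the two arcs emanate into the common region, and the local picture forces at most one essential crossing.

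I expect the main obstacle to be the case analysis when the two strips $S$ and $S'$ overlap in a complicated way — in particular when one of $\alpha', \beta'$ crosses one of $\alpha, \beta$ twice, so that the strips are genuinely interleaved rather than nested. In that situation the naive "thin neighborhood" picture is not embedded and one must carefully track which side of $p$ the arcs $\delta_S, \delta_{S'}$ exit on, and use the fact that $p$ is the \emph{unique} puncture in each strip to rule out extra crossings (an extra crossing would either create an empty bigon with one of the four arcs of $\mathcal{A}$, violating minimal position, or separate two punctures in a way incompatible with the $\sim_p$-equivalence that defined the strips). The base observation that makes this tractable is that $\eps$ is common to all strips and is disjoint from all of $\mathcal{G}$, so one can cut along $\eps$ first and work in the simpler complement, reducing every pairwise comparison to a planar picture where $p$ has been pushed to the boundary.
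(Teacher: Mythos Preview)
Your proposal circles around the right ingredients but never lands on the one observation that makes the lemma short, and without it the ``main obstacle'' you anticipate --- a case analysis on how the strips $S$ and $S'$ interleave --- is exactly what you would be stuck doing. The paper avoids that case analysis entirely.

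Here is the missing idea. Fix $\delta_S,\delta_{S'}$ in minimal position. Since both are disjoint from $\eps$, you can push each slightly off $p$ and continue it along (a parallel copy of) $\eps$ back to $\del D_n$. Doing this on \emph{opposite} sides of $\eps$ produces two boundary-to-boundary arcs $\alpha,\beta$: one is homotopic to a side of $S$, the other to a side of $S'$. By construction these two arcs meet exactly once near $p$ (because they pass $p$ on opposite sides) and elsewhere they meet exactly where $\delta_S$ and $\delta_{S'}$ do, so
\[
|\alpha\cap\beta| \;=\; |\delta_S\cap\delta_{S'}| + 1.
\]
Since $\alpha,\beta$ are homotopic to arcs in $\mc{A}$, one only needs to check that this particular realisation of $\alpha,\beta$ is in minimal position; then $|\alpha\cap\beta|\le 2$ gives $|\delta_S\cap\delta_{S'}|\le 1$ immediately. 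The minimal-position check is a short bigon-criterion argument: any empty bigon or half-bigon between $\alpha$ and $\beta$ whose boundary avoids the intersection point $x$ near $p$ would already be an empty bigon or half-bigon between $\delta_S$ and $\delta_{S'}$, contradicting their minimal position; the regions adjacent to $x$ are nonempty because one of them contains $p$ and the others are nonempty since $\delta_S,\delta_{S'},\eps$ are pairwise non-homotopic.

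By contrast, your plan tracks all four sides $\alpha,\beta,\alpha',\beta'$ simultaneously and hopes that the bound $\le 2$ on each pair will somehow propagate; it does not, without choosing the \emph{right} pair and relating it to $\delta_S,\delta_{S'}$ by the formula above. Your alternative suggestions (extracting an extra arc to contradict maximality, or surgering on two crossings) are not obviously wrong, but you have not said what arc you would extract or why the surgery produces something essential and new, so as written they are not arguments. The ``cut along $\eps$'' idea in your last paragraph is in fact very close to the paper's trick --- cutting along $\eps$ is exactly what turns $\delta_S$ into (a homotope of) one side of $S$ --- but you stop short of saying this and drawing the intersection-number consequence.
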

\begin{proof}
  Firstly, we note that by construction, $\eps$ is disjoint from 
  $\delta_S$ for any $\delta_S\in \mc{C}$. Now, let 
  $\delta_S\neq \delta_{S'}$ be two arcs in minimal 
  position coming from 
  strips $S\neq S'$. Let $\alpha,\beta\in \mc{A}$ be the sides of 
  these strips as in Figure~\ref{fig:Core_Sides}.
  \begin{center}
    \begin{figure}
      \includegraphics[width=0.3\linewidth]{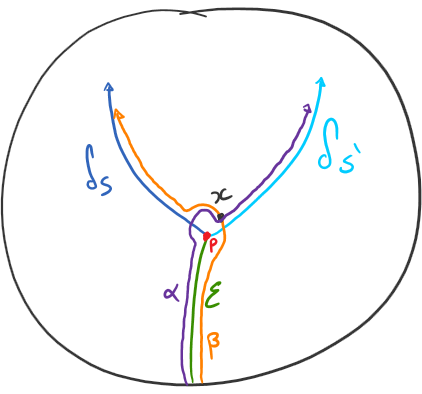}
      \caption{Taking sides of $S$ and $S'$.}
      \label{fig:Core_Sides}
    \end{figure}
  \end{center}
  It is clear that $|\alpha\cap\beta|=|\delta_S\cap\delta_{S'}|+1$, so 
  we just need to show that $\alpha$ and $\beta$ are in minimal 
  position. Let $x$ be the intersection point of $\alpha$ and 
  $\beta$ near $p$ (see Figure~\ref{fig:Core_Sides}). If there is 
  an empty bigon or half-bigon between $\alpha$ and $\beta$ 
  whose boundary does not contain $x$, then this will also 
  be a bigon or half-bigon will also be between 
  $\delta_S,\delta_{S'}$ contradicting 
  their minimal position. The half bigon containing $\eps$ (whose 
  boundary contains $x$) contains the puncture $p$, 
  and is thus nonempty. Since $\delta_S,\delta_{S'},\eps$ 
  are not pairwise homotopic and $\delta_S,\delta_{S'}$ do not 
  form an empty bigon, it follows that the other regions 
  between $\alpha$ and $\beta$ whose boundaries contain $x$ 
  are not empty bigons or half-bigons.

  \mute{this tells us that if 
  $\delta_s,\delta_s'\in\mc{c}$, then the winding todo: 
  up to homeomorphism, we are in figure blah number around 
  $p$ of $\delta_S$ relative to $\delta_{S'}$ is $0$.
  
  We will show that other arcs in $\mc{C}$ cannot intersect 
  twice. Excluding larger intersection numbers follows analogously. 
  If $|\delta_S\cap \delta_{S'}| = 2$, then up to homeomorphism, 
  they must lie the configuration in Figure~\ref{fig:Core_Intersect}.
  \begin{center}
    \begin{figure}
      \includegraphics[width=0.3\linewidth]{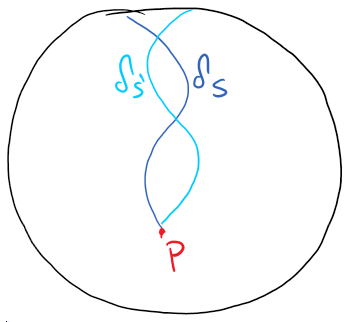}
      \caption{Configurations for arcs from 
      $p$ to $\del D_n$ intsecting twice, 
      punctures removed for clarity.}
      \label{fig:Core_Intersect}
    \end{figure}
  \end{center}
  We can extend these arcs to their respective cores by concatenating 
  $\gamma$. 
  \mute{Note that in the case of Figure~\ref{fig:Core_Intersect} $b)$, 
  $\gamma$ cannot be realized disjointly from both $\delta_S$ 
  and $\delta_{S'}$ simultaneousely, contradiction.
  Thus, only the case in Figure~\ref{fig:Core_Intersect} $a)$ 
  must be considered.  
  }

  Since $\delta_S,\delta_{S'}$ are in minimal position, it 
  follows that every half-bigon or bigon between these arcs will 
  have some punctures, $a,b,c$, as in 
  Figure~\ref{fig:Core_Intersect_2}.
  \begin{center}
    \begin{figure}
      \includegraphics[width=.3\linewidth]{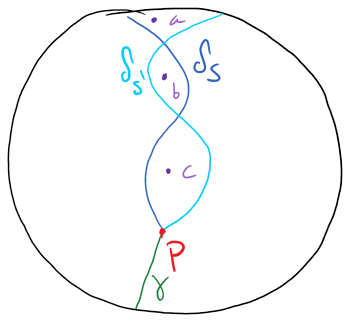}
      \caption{Punctures between the intersecting cores.}
      \label{fig:Core_Intersect_2}
    \end{figure}
  \end{center}
  We can now try to draw the arcs in $\mc{A}$  these bigon cores, 
  noting that they must be homotopic to these bigon cores, and 
  pairwise intersecting at most twice.

  Referring to Figure~\ref{fig:Core_Intersect_2}, 
  let $\beta$ lie to the right of the strip between $\alpha$ and $\beta$, 
  and let $\alpha'$ lie to the left of the strip between 
  $\alpha'$ and $\beta'$. By construction, $\alpha'$ passes to the 
  left of $p$, and $\beta$ passes to the right of $p$ as in the figure 
  below:
  \begin{center}
    \begin{figure}
      \includegraphics[width=.3\linewidth]{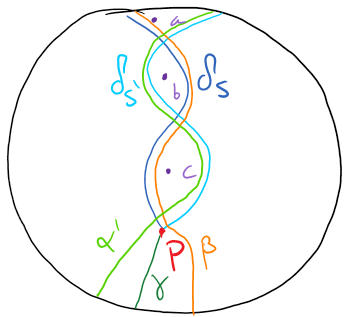}
      \caption{Realizing $\alpha'$, and $\beta$ as curves.}
      \label{fig:Core_Intersect_3}
    \end{figure}
  \end{center}
  However, this obviously leads to a contradiction, since we get that 
  $|\alpha'\cap\beta|=3$, as in Figure~\ref{fig:Core_Intersect_3}.
  \mute{
  We now show that the half-cores in $\mc{C}(\mc{A})$ 
  do not intersect more than twice in minimal position. If they 
  do, then adding the segment $\gamma$ to both of them keeps them 
  in minimal position (since no new bigons are formed). Thus, we 
  get two bigon cores which pairwise intersect more than twice. However, 
  note that each bigon core is homotopic to an arc in $\mc{A}$, 
  so we get two arcs in $\mc{A}$ which pairwise intersect more than 
  twice, which is a contradiction.
  TODO: finish proof for larger intersection}
  TODO: there were some comments about this proof which we didn't 
  discuss yet.}
\end{proof}
We are now ready to prove Proposition~\ref{Bigon_Count}:
\begin{proof}
  By Corollary~\ref{Strip_Counting}, the number of arcs 
  which become pairwise homotopic after removing $p$ is 
  equal to the number of strips formed by arcs in $\mc{A}$ 
  after removing $p$. By Lemma~\ref{Cores_Int_Once}, and Theorem 
  1.7 in \cite{Przytycki}, $|\mathcal{G}|\le \binom{n}{2}$. 
  Since $|\mathcal{S}|=|\mathcal{G}|$-1, the proposition follows.
\end{proof}

\section{Discussion and Corollaries}
We present a few corollaries of Theorem~\ref{Main_Theorem} and 
Theorem~\ref{Square_Cplx_Thm}.
\begin{Corollary}
  Let $X$ be a planar square complex homotopy equivalent to 
  a disk, whose hyperplanes satisfy condition $(\ast)$. Then if $X$ 
  contains at least two $X$ has two corners and/or spurs.
\end{Corollary}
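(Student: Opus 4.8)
The plan is to bootstrap from Theorem~\ref{Square_Cplx_Thm} by induction on the number of cells of $X$. The base cases are degenerate: a single $1$-cell has a spur at each of its two endpoints, and a single square has four corners, so such an $X$ already has at least two corners and/or spurs. As in the proof of Theorem~\ref{Square_Cplx_Thm}, I would first collapse the hyperplanes dual to $1$-cells that are not contained in any $2$-cell, which creates no new corners; so, apart from the degenerate cases, I may assume every $1$-cell of $X$ lies in a $2$-cell.

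Next I would dispose of the case where $X$ has a separating cell. If $v$ is a cut vertex, the branches $X_1,\dots,X_m$ of $X$ at $v$ (with $m\ge 2$) are finite planar square complexes homotopy equivalent to a disk, and every hyperplane of $X_i$ lies inside a hyperplane of $X$, so the $X_i$ satisfy condition $(\ast)$. By Theorem~\ref{Square_Cplx_Thm} each $X_i$ has a corner or a spur. The key observation --- valid because the definition of a corner allows a square to sit between the two boundary edges --- is that such a corner or spur of $X_i$ is also a corner or spur of $X$, even when it is located at $v$; since those coming from distinct $X_i$ involve disjoint sets of cells, this already exhibits two corners and/or spurs of $X$. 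A separating edge forces a corner or a separating square, and a separating square is handled in the same spirit using the doubling construction from the proof of Theorem~\ref{Square_Cplx_Thm}. Hence I may assume $X$ has no separating cell; in particular $X$ is homeomorphic to a disk and $\del X$ is a circle.

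In the remaining case I would invoke Theorem~\ref{Square_Cplx_Thm} once to obtain a single corner or spur $\sigma$, at a vertex $v$, and then excise it by a local move near $\sigma$ (deleting the spur together with $v$, or folding/collapsing the corner square). The resulting complex $X'$ has strictly fewer cells, is again a finite planar complex homotopy equivalent to a disk, and still satisfies condition $(\ast)$: each of its hyperplanes comes from a hyperplane of $X$ by deleting a bounded piece, so it stays simple and picks up no new crossings. These checks are of the same type as those in Lemma~\ref{Reidemeister} and in the inductive step of the proof of Theorem~\ref{Square_Cplx_Thm}. If $X'$ is a single $0$-cell, then $X$ was a base case and we are done; otherwise the inductive hypothesis gives $X'$ at least two corners and/or spurs, and --- taking $\sigma$ extremally placed so that the excision damages as little of the boundary as possible --- a counting argument recovers from them at least one corner or spur of $X$ located at a vertex other than $v$, hence distinct from $\sigma$. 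Together with $\sigma$ this gives two corners and/or spurs of $X$.

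The step I expect to be the main obstacle is making this excision precise. One must pick the local move --- and, when the two hyperplanes through the corner square bound a bigon, first remove that bigon using Lemma~\ref{Reidemeister} --- so that condition $(\ast)$ and the disk structure survive and so that the move creates at most a controlled number of new corners or spurs in $X'$, few enough for the counting argument to go through. I would do this by reusing the $Y$, $\hat Y$, $Z$ constructions and the maximality choice $M=\max_{s}\max\{|X_{s_-}|,|X_{s_+}|\}$ from the proof of Theorem~\ref{Square_Cplx_Thm}, which are designed exactly so that all modifications are confined to a small controlled region near $\sigma$. Once that bookkeeping is in place, the induction closes as described above.
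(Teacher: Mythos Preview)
Your inductive excision scheme is vastly more elaborate than what is needed, and the step you yourself flag as the obstacle is a genuine gap. Deleting a corner square at $v$ exposes two new boundary edges and can create as many as three new corners in $X'$ (one at each of the other vertices of that square), so merely knowing by induction that $X'$ has two corners and/or spurs does not hand you one lying away from the excised region. Your appeal to the $Y,\hat Y,Z$ machinery from the proof of Theorem~\ref{Square_Cplx_Thm} to control this is a hope rather than an argument; carrying it out would amount to reproving that theorem in a strengthened form. The separating--cell reductions you sketch have similar loose ends.

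The paper dispenses with all of this in a couple of lines. By Theorem~\ref{Square_Cplx_Thm}, $X$ has at least one corner or spur, say at $v$. If it were the only one, form the wedge $Y=X\sqcup_v X$ of two copies of $X$ glued at the single vertex $v$. Away from $v$ the local picture is unchanged, so $Y$ has no corners or spurs there, and at $v$ the gluing kills the lone corner or spur. Every hyperplane of $Y$ lies entirely in one copy of $X$, so condition~$(\ast)$ is automatic, and $Y$ is still planar and homotopy equivalent to a disk. Thus $Y$ contradicts Theorem~\ref{Square_Cplx_Thm}. The missing idea in your approach is precisely this doubling trick: it converts ``exactly one corner/spur'' directly into ``none'' in a single stroke, with no induction, no excision, and no bookkeeping.
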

\begin{proof}
  By Theorem~\ref{Square_Cplx_Thm}, $X$ has a corner or a spur. 
  If $X$ has only one corner or spur at $v$, then since $X$ 
  contains at least two $1$-cells, it follows that 
  the double $Y=X\sqcup_v X$ will have no 
  corners or spurs. Condition $(\ast)$ 
  is immediately satisfied by $Y$, and the homotopy type of $Y$ is 
  still that of a disk, contradicting Theorem~\ref{Square_Cplx_Thm}.
\end{proof}
This corollary is reminiscent of Greendlinger's Lemma from 
small cancellation theory:
\begin{Theorem}
  Let $X$ be a $C(6)$-complex, and $D\to X$ a minimal disc diagram. 
  Then one of the following holds:
  \begin{itemize}
    \item $D$ is a single cell,
    \item $D$ is a ladder, or 
    \item $D$ has at least three spurs or shells of degree $\le 3$.
  \end{itemize}
\end{Theorem}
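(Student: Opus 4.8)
The plan is to run a combinatorial Gauss--Bonnet argument, the standard route to ladder-type theorems in small cancellation theory. Since $D$ is a minimal disc diagram it is reduced, so over a $C(6)$ complex every interior $2$-cell of $D$ has its boundary cycle equal to a concatenation of at least $6$ pieces, and every interior vertex of $D$ that is not a subdivision point in the interior of a piece has valence at least $3$. Recall also the relevant vocabulary: a \emph{spur} is a valence-one vertex of $D$ lying on $\partial D$, and a \emph{shell of degree $\le d$} is a boundary $2$-cell $R$ whose boundary cycle decomposes as a concatenation $Q S$ with $Q=\partial R\cap\partial D$ a single arc (the outer path) and $S$ (the inner path) a concatenation of at most $d$ pieces. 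If $D$ is a single $0$-cell, a single $2$-cell, or a ladder, there is nothing to prove, so assume none of these holds; in particular $D$ has a $2$-cell.

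First I would fix an angle assignment on the corners of $D$ adapted to the piece structure: a corner at a junction of two distinct pieces of the surrounding cell receives angle $\tfrac{2\pi}{3}$, while a corner interior to a single piece, and the corner at any valence-two boundary vertex, receives angle $\pi$. Combinatorial Gauss--Bonnet then reads
$$\sum_{v}\kappa(v)+\sum_{R}\kappa(R)=2\pi,$$
where $\kappa(R)=\bigl(\textstyle\sum_{c}\angle c\bigr)-(\operatorname{len}\partial R-2)\pi$, where $\kappa(v)=2\pi-\sum_{c}\angle c$ for interior $v$, and where $\kappa(v)=\pi-\sum_{c}\angle c$ for $v\in\partial D$. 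The $C(6)$ hypothesis yields $\kappa(R)=2\pi-\tfrac{p\pi}{3}\le 0$ for an interior cell whose boundary has $p\ge 6$ pieces, and $\kappa(v)\le 0$ for every interior vertex; a parallel computation gives $\kappa(R)\le 0$ for every boundary $2$-cell that is not a shell of degree $\le 3$ and $\kappa(v)\le 0$ for every boundary vertex that is not a spur, whereas a spur contributes exactly $\pi$ and a shell of degree $\le 3$ contributes at most $\pi$. Pinning down these curvature estimates --- in particular checking that the chosen angles are genuinely compatible at the boundary cells, and that positive curvature really does force a spur or a low-degree shell --- is the main technical obstacle; it is entirely local and is precisely the classical content of Greendlinger's lemma.

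Granting the estimates, all positive curvature of $D$ is supported on spurs and shells of degree $\le 3$, and it totals at least $2\pi$; since each such feature carries at most $\pi$, there are at least two of them. It remains to exclude having exactly one or exactly two. A single feature would have to carry the whole $2\pi$, which is impossible for a spur and forces $D$ to be a single $2$-cell. If there were exactly two, each would carry exactly $\pi$, so every interior vertex, every interior $2$-cell, and every remaining boundary $2$-cell of $D$ would be flat; a connectedness/linearity argument on this flat locus (a flat boundary $2$-cell meets $\partial D$ in two disjoint arcs and so is attached to its neighbours along exactly two interior arcs) then forces $D$ to be a ladder, contrary to assumption. Hence $D$ has at least three spurs or shells of degree $\le 3$. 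As an alternative to Gauss--Bonnet, one can instead prove the trichotomy by induction on the number of $2$-cells of $D$, peeling off a spur or a low-degree shell at each step in the spirit of McCammond--Wise's ``fans and ladders'' analysis; I would expect the curvature argument to be the shorter of the two.
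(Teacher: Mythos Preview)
The paper does not prove this theorem at all. It is stated in the discussion section merely as a known result from small cancellation theory --- Greendlinger's Lemma in its McCammond--Wise ladder formulation --- to highlight a structural analogy with the paper's own Theorem~1.2 on square complexes. There is therefore nothing in the paper to compare your argument against.

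That said, your outline is the standard route to this classical result: a combinatorial Gauss--Bonnet count with angles chosen so that the $C(6)$ hypothesis forces nonpositive curvature away from spurs and low-degree shells, followed by the observation that the positive curvature must total $2\pi$ and each feature contributes at most $\pi$. The places you flag as needing care are exactly the genuine work (verifying the boundary curvature estimates and the flatness-forces-ladder step), and your suggested alternative --- the inductive fans-and-ladders peeling argument of McCammond--Wise --- is the other canonical proof. Either would be an appropriate reference for this statement; neither belongs to the present paper.
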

The main difference is that in the above corollary, we only require 
knowledge of the hyperplane intsersection data instead of 
negative curvature assumptions.
\subsection{Generalizing Theorems~\ref{Main_Theorem} 
and~\ref{Square_Cplx_Thm}}
The proof method outlined in this paper can also be used to simplify 
the case for once-intersecting families:
\begin{Theorem} [Theorem 1.7 in {\cite{Przytycki}}]
  The maximal cardinality of a good family of arcs pairwise 
  intersecting at most once on $D_n$ 
  is $\binom{n}{2}$.
\end{Theorem}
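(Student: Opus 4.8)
The plan is to mimic the structure of the proof of Theorem~\ref{Main_Theorem}, but in the simpler setting where arcs intersect at most once. The induction is on $n$, with the base case $n=1$ being trivial. For the inductive step, I would establish the two analogues of the propositions used in the main argument: first, a ``cloud'' statement asserting that any maximal good family of arcs pairwise intersecting at most once contains an isolated arc $\alpha_p$; second, a ``bigon count'' statement asserting that if $\alpha_p$ is isolated with $p$ the unique puncture it cuts off, and $\mc{A}'$ is the family obtained by removing $p$ and identifying arcs that become homotopic, then $|\mc{A}|-|\mc{A}'|\le n-1$. Granting these, the induction closes exactly as before: $|\mc{A}|\le |\mc{A}'|+(n-1)\le \binom{n}{2}+(n-1)=\binom{n}{2}+\binom{n-1}{1}$, and by Pascal's rule $\binom{n}{2}=\binom{n-1}{2}+\binom{n-1}{1}$ gives $\binom{n}{2}$ on $D_n$ once the index bookkeeping is done correctly (the inductive hypothesis applies to $D_{n-1}$, where the bound is $\binom{n-1}{2}$, and adding $n-1$ gives $\binom{n}{2}$).

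The isolated-arc statement follows from Theorem~\ref{Square_Cplx_Thm} in precisely the same way as Proposition~\ref{Cloud_Prop}: one forms the dual square complex $X(\mc{A})$, which is homotopy equivalent to a disk and whose hyperplanes --- being the arcs of $\mc{A}$ --- trivially satisfy condition $(\ast)$ since they intersect at most once, hence at most twice. By Theorem~\ref{Square_Cplx_Thm}, $X(\mc{A})$ has a corner or a spur, and Lemma~\ref{Corner_Max} then forces either an isolated arc or a contradiction to maximality. No new input is needed here.

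The genuinely different piece is the bigon count. With arcs intersecting at most once rather than twice, the case analysis of Lemma~\ref{Min_Pos_Bigon} collapses: two arcs $\alpha,\beta\in\mc{A}$ that become homotopic and stay essential after removing $p$ must either be disjoint (the strip configuration of Figure~\ref{fig:Punctured_Bigon}a, with $p$ the lone puncture in the strip) or meet once --- and the once-intersecting case is eliminated by the same Bigon Criterion argument as in case (2) of Lemma~\ref{Min_Pos_Bigon}, since no half-bigon is available to legitimize the crossing without extra punctures that would obstruct the homotopy. Consequently every $\sim_p$-equivalence class has size at most two and corresponds to at most one strip containing $p$, so $|\mc{A}|-|\mc{A}'|\le 1+|\mc{S}|$, where $\mc{S}$ is the set of such strips. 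One then assigns to each strip $S\in\mc{S}$ the arc $\delta_S$ from $p$ to $\del D_n$ as in the definition preceding Lemma~\ref{Cores_Int_Once}, together with the fixed arc $\eps$; the proof of Lemma~\ref{Strip_Intersection} and Lemma~\ref{Cores_Int_Once} goes through verbatim (it never used the at-most-twice hypothesis in an essential way beyond what at-most-once already gives), yielding a family $\mc{G}=\{\delta_S\}\cup\{\eps\}$ of arcs from a single puncture to the boundary pairwise intersecting at most once. Such families on $D_n$ --- arcs with one fixed endpoint at a puncture and one free endpoint on $\del D_n$, pairwise disjoint after the puncture-to-boundary reduction --- have size at most $n$ by an elementary direct argument (they are determined by which ``slot'' between consecutive punctures they separate, combined with a fixed isotopy class), and hence $|\mc{S}|=|\mc{G}|-1\le n-1$.

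The main obstacle I anticipate is the last counting step --- bounding $|\mc{G}|$ --- and, relatedly, making sure the puncture-index bookkeeping across the induction is airtight (whether removing $p$ lands us on $D_{n-1}$ with bound $\binom{n-1}{2}$, and whether $\binom{n-1}{2}+(n-1)$ really equals $\binom{n}{2}$, which it does). Everything else is a matter of checking that the at-most-twice arguments of Sections 5--7 degrade gracefully to at-most-once, which they do because the at-most-once hypothesis is strictly stronger; in fact the whole apparatus of $\gamma$-arcs (Lemma~\ref{lem:gamma}) and three-arc equivalence classes (Lemma~\ref{Three_Arcs_Homotopy}) simply disappears, which is the promised simplification.
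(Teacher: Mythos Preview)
Your proposal is correct and matches the paper's intended approach: the paper gives no detailed proof here, only the remark that ``the proof method outlined in this paper can also be used to simplify'' the once-intersecting case, and you have faithfully carried out that simplification.

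One point deserves sharpening, precisely at the step you flagged as the main obstacle. You write that Lemma~\ref{Cores_Int_Once} ``goes through verbatim'' to give a family $\mc{G}$ of arcs from $p$ to $\del D_n$ pairwise intersecting at most once, and then invoke an elementary bound. But quoting the lemma verbatim is dangerous: in the paper, the bound on $|\mc{G}|$ is obtained by citing Przytycki's Theorem~1.7 itself, so running the argument literally would be circular. What rescues you is that the lemma actually gives more in the at-most-once setting: the key identity $|\alpha\cap\beta|=|\delta_S\cap\delta_{S'}|+1$ together with $|\alpha\cap\beta|\le 1$ forces $|\delta_S\cap\delta_{S'}|=0$, so the arcs of $\mc{G}$ are pairwise \emph{disjoint}, not merely at-most-once-intersecting. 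With disjointness in hand your elementary argument becomes genuine and clean: $k$ pairwise disjoint non-homotopic arcs from $p$ to $\del D_n$ cut the disk into $k$ sectors, each of which must contain one of the remaining $n-1$ punctures, so $|\mc{G}|\le n-1$, hence $|\mc{S}|\le n-2$ and $|\mc{A}|-|\mc{A}'|\le n-1$. The induction then closes exactly as you wrote.
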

Note that in general, it is not true that when allowing for 
$k$-intersections, the maximal families are of size 
$\binom{n+k-1}{k+1}$. For example, any maximal family of 
arcs on $D_3$ pairwise intersecting at most three times 
is of size at most $4$.

As for Theorem~\ref{Square_Cplx_Thm}, it is not true when allowing 
for triple intsersections:
\begin{center}
  \begin{figure}
    \includegraphics[width=.2\linewidth]{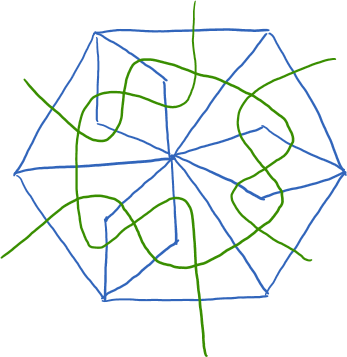}
    \caption{Theorem~\ref{Square_Cplx_Thm} is not true 
    when we allow for three intersections.}
    \label{fig:Three_Intersect}
  \end{figure}
\end{center}
\subsection{Actions of $Mod(D_n)$}
It is natural to ask whether the maximal families of 
size $\binom{n+1}{3}$ on $D_n$ are 
related by homeomorphisms in the mapping class group of $D_n$. 
Alternatively, does the mapping class group act transitively on 
maximal good families of arcs? The answer is no, as illustrated 
in Figure~\ref{fig:MCG_Action}.
\begin{center}
  \begin{figure}
    \includegraphics[width=.4\linewidth]{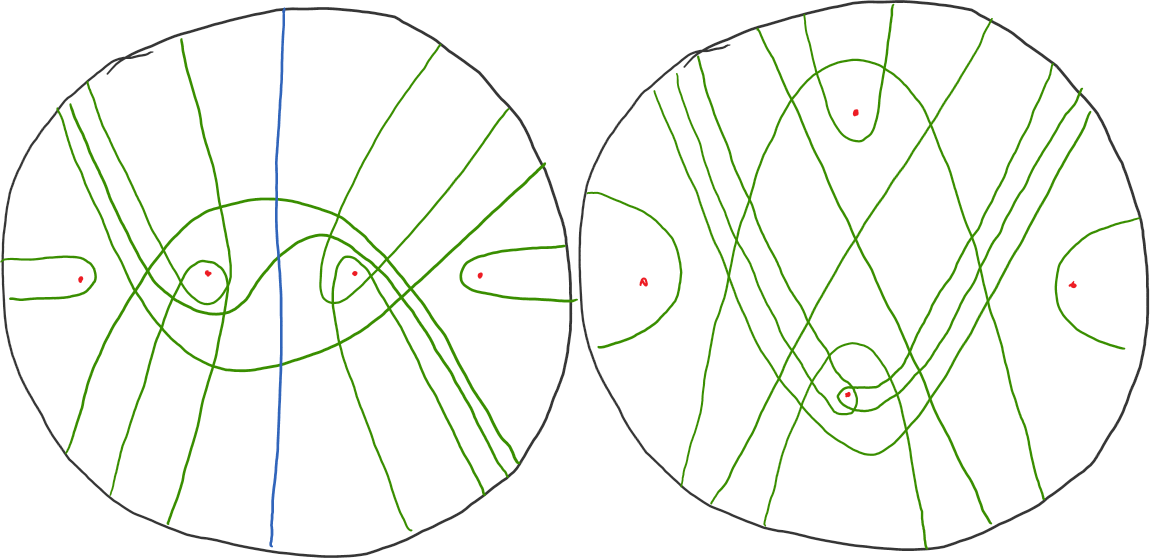}
    \caption{Two good families on $D_4$. Both are maximal, as 
    their size is $10=\binom{5}{3}$, and are in minimal position.}
    \label{fig:MCG_Action}
  \end{figure}
\end{center}
\begin{Claim}
  The good families in Figure~\ref{fig:MCG_Action} are 
  not in the same orbit of $Mod(D_n)$.
\end{Claim}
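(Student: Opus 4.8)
The plan is to produce an invariant of a good family that is manifestly preserved by the $Mod(D_n)$-action, and then to check from Figure~\ref{fig:MCG_Action} that it takes different values on the two families. The underlying principle is simple: if $\phi\in Mod(D_n)$ and $\mathcal{A}$ is a good family, then $\phi(\mathcal{A})$ is again a good family with $|\phi(\mathcal{A})|=|\mathcal{A}|$, and, $\phi$ being a homeomorphism, $|\phi\alpha\cap\phi\beta|=|\alpha\cap\beta|$ in minimal position for all $\alpha,\beta$. Hence the \emph{weighted intersection graph} $\Gamma(\mathcal{A})$ --- vertices the arcs of $\mathcal{A}$, each pair joined by an edge labelled $i(\alpha,\beta)\in\{0,1,2\}$ --- is determined up to isomorphism by the orbit of $\mathcal{A}$; so too is the dual square complex $X(\mathcal{A})$, and in particular its number of squares $|X(\mathcal{A})|=\sum_{\{\alpha,\beta\}}i(\alpha,\beta)$, its number of complementary regions, and its number of edges.

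First I would read off from Figure~\ref{fig:MCG_Action} the labelled graph of each family $\mathcal{A}_1,\mathcal{A}_2$, recording $n_k(\mathcal{A}_i)=\#\{\{\alpha,\beta\}\subset\mathcal{A}_i: i(\alpha,\beta)=k\}$ for $k=0,1,2$, where $n_0+n_1+n_2=\binom{10}{2}=45$. The quickest candidate is the total crossing number $n_1+2n_2=|X(\mathcal{A}_i)|$, or equivalently the number $n_0$ of disjoint pairs; I expect one of these to disagree between the two pictures, and a single discrepancy already shows the families lie in distinct orbits. If instead the coarse statistics $(n_0,n_1,n_2)$ happen to coincide, I would pass to finer orbit invariants: the isomorphism type of the labelled graph $\Gamma(\mathcal{A}_i)$ itself (comparing, say, degree sequences in its disjointness subgraph, or counts of labelled triangles), the combinatorial type of $X(\mathcal{A}_i)$ together with its boundary circle, or the multiset of partitions of the four punctures cut out by the arcs --- each arc with endpoints on $\partial D_4$ induces a $2$-colouring of the punctures, and this multiset of $2$-colourings, taken up to the $S_4$-action induced by $Mod(D_4)=B_4\twoheadrightarrow S_4$, is an orbit invariant. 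One of these necessarily separates $\mathcal{A}_1$ from $\mathcal{A}_2$.

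The main obstacle is precisely this last step: the two examples were presumably chosen to look superficially similar, so it is possible that all the easy statistics agree and one must pin down exactly which finer invariant distinguishes them and then verify its value carefully from the figure --- a somewhat delicate hand computation. Still, the robust first move is short: compute the total number of crossings, or the number of disjoint pairs, for each family directly from the picture and observe that they differ; I would attempt that before anything more elaborate.
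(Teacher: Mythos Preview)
Your plan is correct and rests on the same principle as the paper's proof: intersection numbers are preserved by $Mod(D_n)$, so the weighted intersection graph $\Gamma(\mc{A})$ is an orbit invariant. The paper, however, does not bother with the global statistics $(n_0,n_1,n_2)$ or the total crossing count you suggest trying first; it goes straight to the \emph{weighted degree sequence} --- for each arc $\alpha$, the number $\sum_{\beta\neq\alpha} i(\alpha,\beta)$ --- and observes that the left family contains an arc whose total intersection with the rest is exactly $3$, while no arc in the right family has this property. This is one of your fallback invariants (``degree sequences''), so your plan would succeed, but you would save time by checking that invariant first rather than last.
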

\begin{proof}
  We know that elements of the mapping class group preserve 
  intersection numbers of arcs. We first note 
  that by Lemma~\ref{Bigon_Criterion}, both of these families are 
  in minimal position. The left family in 
  Figure~\ref{fig:MCG_Action}, has an arc (the blue arc) 
  which has precisely three intersections with other arcs in 
  the family, whereas in the right family, no such arc exists. 
  Thus, these families are not obtained from one another by 
  a homeomorphism in $Mod(D_n)$.
\end{proof}
\subsection{Further Questions}
\begin{Question}
  Is every maximal good family of arcs on $D_n$ of size $\binom{n+1}{3}$? 
  This is true for the curve complex, but not much is known in 
  the case of the arc complexes.
\end{Question}
\begin{Question}
  Does the upper bound from Theorem~\ref{Main_Theorem} hold when 
  we consider arcs from $\del D_n$ to a specified puncture 
  $p\neq \del D_n$?
\end{Question}

\section*{Aknowledgements}
I would like to thank my advisor, Professor Piotr Przytycki, who has 
guided me throughout the development and writing of this paper, 
and whose support was paramount to the completion of this work.

I would also like to thank Professor Daniel Wise and the rest of the 
geometric group theory community at McGill University for their 
encouragement, support, and feedback.

Partially supported by NSERC and UMO-2015/18/M/ST1/0005

\end{document}